\titleformat*{\section}{\large\bfseries}
\titleformat*{\subsection}{\bfseries}
\titleformat*{\subsubsection}{\bfseries}
\titleformat*{\paragraph}{\large\itseries}
\titleformat*{\subparagraph}{\itseries}
\numberwithin{equation}{section}
\def\P{{\mathbf{P}}}
\def\Z{{\mathbb{Z}}}
\def\K{{\mathbb{K}}}
\def\A{{\mathcal{A}}}
\def\B{{\mathcal{B}}}
\def\E{{\mathcal{E}}}
\def\F{{\mathcal{F}}}
\def\cO{\mathcal{O}}
\def\cL{\mathcal{L}}
\DeclareMathOperator{\codim}{codim}
\DeclareMathOperator{\Der}{Der}
\DeclareMathOperator{\pd}{pd}
\DeclareMathOperator{\Shi}{Shi}
\newcommand{\owari}{\hfill$\square$}
\def\unindentedkeywords#1{%
  \begingroup
  \renewcommand\thefootnote{} 
  \def\@makefntext##1{\noindent ##1} 
  \footnote{#1}%
  \addtocounter{footnote}{-1}%
  \endgroup
}
\newtheorem{theorem}{Theorem}[section]
\newtheorem{prop}[theorem]{Proposition}
\newtheorem{cor}[theorem]{Corollary}
\newtheorem{lemma}[theorem]{Lemma}
\newtheorem{conj}[theorem]{Conjecture}
\theoremstyle{definition}
\newtheorem{define}[theorem]{Definition}
\theoremstyle{remark}
\newtheorem{rem}[theorem]{Remark}
\title{On the projective dimension of some deformations of 
Weyl arrangements}
\author{Takuro Abe and Daniele Faenzi}
\date{}
\begin{document}

\unmarkedfntext{\textbf{2020 Mathematics Subject Classification}. Primary 32S22, 52S35.\smallskip\newline
\textbf{Keywords}. Hyperplane arrangements, logarithmic 
derivation modules, deformations of Weyl arrangements, projective dimensions, minimal free resolutions.}

\maketitle

\begin{abstract}
We show that the logarithmic derivation module of the cone of the
deformation $\A_{\Phi^+}^{[-j.j+2]}$ of a Weyl arrangement associated with a root system $\Phi$ of simply laced type
has projective dimension one.
In addition, we give an explicit 
minimal free resolution when  $\Phi$ is of type $A_3$ and $B_2$. Moreover, 
in the second case, we determine the jumping lines of maximal jumping order of the associated
vector bundle. 
For $j \ge 3$ and $k \ne k'$, setting $\A = c\A^{[-k,k+j]}_{B_2}$, $\A' = c\A^{[-k',k'+j]}_{B_2}$, 
this allows to distinguish $D_0(\A)$ from $D_0(\A')[4(k'-k)]$, even though these modules have the same graded Betti numbers.
\end{abstract}

\sloppy

\section{Introduction}

Let $\K$ be a field of characteristic zero, $V=\K^\ell$, $S=\mbox{Sym}^*(V^*)=
\K[x_1,\ldots,x_\ell]$ and let $\Der S:=\oplus_{i=1}^\ell S \partial_{x_i}$. An \textbf{arragement} of hyperplanes $\A$ is a finite set of linear hyperplanes in $V$. For each $H \in \A$ fix $\alpha_H \in V^*$ such that $\ker \alpha_H=H$. Let $Q(\A):=\prod_{H \in \A} \alpha_H$ be a defining polynomial of $\A$. An important algebraic invariant of $\A$ is the following.

\begin{define}
The \textbf{logarithmic derivation module} $D(\A)$ of $\A$ is
$$
D(\A):=\{\theta \in \Der S \mid \theta(\alpha_H) \in S \alpha_H,\forall H \in \A\}.
$$
We say that $\A$ is \textbf{free} with $\exp(\A)=(d_1,\ldots,d_\ell)$ if
$$
 D(\A) \cong \bigoplus_{i=1}^\ell S[-d_i], \qquad \mbox{as graded $S$-modules.}
$$
\end{define}
Free arrangements form one of the most interesting classes of configuration spaces.
However, in many cases of interest the homogeneous $S$-module $D(\A)$ is actually not free. In this paper we are particularly interested in arrangements of projective dimension one, which we consider as those that depart as little as possible from freeness.

We focus on arrangements arising from root systems, which are among the most studied kinds of arrangements.
Given a root system $\Phi$ and its set of positive roots
$\Phi^+$, the associated Weyl arrangement $\A_{\Phi^+}$ of type $\Phi$ is:
$$
\A_{\Phi^+}:=\{\alpha=0\mid \alpha \in \Phi^+\}.
$$
Historically, a lot of work has been devoted to the study of the so-called \textit{deformations of the Weyl arrangements}. This is the union of integral level sets of linear forms given by positive roots of $\Phi$, namely, given integers $a \le b \in \Z$, we set
$$
\A^{[a,b]}_{\Phi^+}=\A^{[a,b]}:=\{\alpha=k \mid \alpha \in \Phi^+, a \le k \le b,\ k \in \Z\}.
$$
These arrangements are subsets of the affine Weyl arrangement, and include the extended Catalan and Shi arrangements, see e.g. \cite{Y1}.  Among the several papers discussing the properties of deformations of Weyl and Coxeter arrangements, let us mention \cite{Ath2,Ath1,PostStan,YLinial} and \cite{Tamura}.

For an affine arrangement $\A$ defined by $Q(x_1,\ldots,x_\ell)=0$, the \textbf{coning} $c\A$ of $\A$ is an arrangement in $\K^{\ell+1}$ defined by
$$
z^{|\A|}Q\left(\frac{x_1}{z},\ldots,\frac{x_\ell}{z}\right)=0
$$
Now 
recall the following part of the conjecture in \cite{AFV}.

\begin{conj} \label{conj:AFV}
Let $\Phi$ be an irreducible root system of rank $\ell \ge 2$, let $k\ge 0$, $j \ge 2$ be integers and consider
$
\A=\A_{\Phi^+}^{[-k,k+j]}.
$
If $j=2$ and $\ell \ge 3$, or if $\ell=2$, then $\pd_S (D(c\A))=1$.
\end{conj}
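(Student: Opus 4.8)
The plan is to first split off the Euler derivation, writing $D(c\A)=S\theta_E\oplus D_0(c\A)$ with $\theta_E$ the Euler derivation, so that $\pd_S D(c\A)=\pd_S D_0(c\A)$, and then to treat the two regimes of the statement separately, because the obstruction to $\pd\le 1$ is of a different nature in each. In both regimes the lower bound $\pd_S D_0(c\A)\ge 1$ is equivalent to non-freeness of $c\A$, whereas the upper bound $\pd_S D_0(c\A)\le 1$ is automatic only when $\ell=2$.

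When $\ell=2$ the cone $c\A$ lives in $\K^3$, so $D(c\A)$ is a rank-$3$ reflexive module over the three-variable ring $S$; a reflexive module over a regular ring of dimension three has depth at least two, whence $\pd_S D(c\A)\le 1$ by Auslander--Buchsbaum, and it remains only to prove non-freeness for every $j\ge 2$. I would do this on the sheaf side, where $\E:=\widetilde{D_0(c\A)}$ is a rank-$2$ bundle on $\mathbf P^2$ and $c\A$ is free exactly when $\E$ splits into line bundles; a jumping line of $\E$ then witnesses non-splitting. Equivalently, by Yoshinaga's criterion, non-freeness means that the second coefficient $b_2(c\A)$ of the characteristic polynomial strictly exceeds the product $d_1d_2$ of the exponents of the Ziegler restriction of $c\A$ to the hyperplane at infinity. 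That restriction is the Weyl multiarrangement $(\A_\Phi,m)$ with constant multiplicity $m=2k+j+1$, whose exponents are known in each rank-$2$ type; comparing them with $b_2(c\A)$, computed from the finite-field formula of \cite{Ath1}, reduces the claim to an explicit numerical inequality which is strict precisely when $j\ge 2$, the free cases $j\in\{0,1\}$ (extended Catalan and extended Shi) being the boundary where equality holds.

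For $j=2$ and $\ell\ge 3$ the upper bound is the real content. I would start from the extended Shi arrangement $\A_{\Phi^+}^{[-k,k+1]}$, which is free by Yoshinaga's theorem, and note that $\A=\A_{\Phi^+}^{[-k,k+2]}$ is obtained from it by adding the single level $\{\alpha=k+2:\alpha\in\Phi^+\}$, that is, $|\Phi^+|$ hyperplanes. Coning and ordering these additions, I would run Abe's addition theory along the resulting chain, using at each step the addition--deletion exact sequence
$$
0\to D(\mathcal B)\to D(\mathcal B')\to D(\mathcal B''),\qquad \mathcal B'=\mathcal B\setminus\{H\},\ \mathcal B''=\mathcal B^{H},
$$
which forces each term to be either free again or plus-one generated, the latter carrying a resolution $0\to S[-d]\to\bigoplus_i S[-e_i]\to D(c\A)\to 0$ with a single relation, hence $\pd=1$. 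Keeping that relation unique throughout the chain requires the restrictions $\mathcal B''$ to the newly added hyperplanes to be free with the expected exponents, and this is exactly where the simply-laced hypothesis is used.

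The main obstacle is the uniform upper bound in the higher-rank regime: controlling $\pd_S D(c\A)$ through the addition--deletion sequences demands, for each added $H$, precise knowledge of the freeness and exponents of the restricted multiarrangement on $H$, and the single-relation structure can fail as soon as such a restriction is not free. For simply-laced $\Phi$ with $j=2$ I expect the argument to close uniformly, and for every rank-$2$ type and every $j\ge 2$ the reflexivity argument together with the jumping-line computation is clean and self-contained; but for the multiply-laced types of rank $\ge 3$ the restricted multiarrangements need not be free, so I expect those cases to require a separate, likely type-by-type, analysis, which I regard as the genuinely hard and still largely open part of the statement.
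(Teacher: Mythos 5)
You are right to flag that most of this statement is genuinely open: it is stated as Conjecture \ref{conj:AFV}, and the paper itself only proves pieces of it --- the simply-laced $j=2$ case (Theorem \ref{main}), type $A_2$ via \cite{AFV} (Theorem \ref{theoremAFV}), and type $B_2$ for all $j$ (Theorem \ref{resolution-B2}); note that $G_2$, which your $\ell=2$ regime must also cover, is left open in the paper as well, so your claim that the rank-two cases are ``clean and self-contained'' overreaches. Your skeleton for $\ell=2$ is sound as far as it goes: reflexivity of $D(c\A)$ plus Auslander--Buchsbaum does give $\pd_S \le 1$ for any arrangement in $\K^3$, and reducing non-freeness to the strict inequality $\chi_0(c\A;0)>d_1d_2$ in Yoshinaga's criterion (Theorem \ref{Ycriterion}), with the Ziegler restriction being the rank-two Weyl multiarrangement of constant multiplicity $2k+j+1$, is a plausible and genuinely different route from the paper, which instead builds explicit minimal free resolutions by addition/deletion (Lemmas \ref{lemma1} and \ref{lemma2}) starting from the free Catalan and Shi arrangements of Proposition \ref{CatShi}. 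But you assert the decisive numerical inequality ``strict precisely when $j\ge 2$'' without verifying it in any type, so for rank two your proposal is a program, not a proof.

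The concrete gap is in the $j=2$, $\ell\ge 3$ regime: your chain runs in the wrong direction and rests on a dichotomy that is not a theorem. The SPOG result (Theorem \ref{SPOG}) concerns \emph{deletion} of a single hyperplane from a \emph{free} arrangement; there is no analogous addition statement, and after your first addition to the extended Shi arrangement the intermediate arrangements are no longer free, so ``free or plus-one generated at each step'' cannot iterate. Moreover your displayed sequence with third term $D(\mathcal B'')$ is not the available tool: the correct one is Terao's $B$-sequence $0\to D(\A)\to D(\A')\to \overline{SB}$ of Proposition \ref{B}, and its surjectivity (Theorem \ref{FST}) requires $\pd_S(D(\A))\le 1$ for the \emph{larger} arrangement together with local freeness along $H$ in codimension three --- for addition this is circular, since the larger arrangement is exactly what you are trying to control. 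The paper avoids this by going downward: it starts from the bigger free extended Shi arrangement $c\A^{[-k-1,k+2]}$ and deletes the $|\Phi^+|$ hyperplanes $\alpha+(k+1)z=0$ in decreasing root height, with Theorem \ref{pd1} propagating $\pd_S\le 1$ at each step; the real work, Proposition \ref{keyprop}, is the codimension-three local freeness verification, carried out by classifying rank-three \emph{localizations} $(\A_i)_X$ --- parabolics of types $A_1^3$, $A_1\times A_2$, $A_3$, using Stanley's genericity criterion, plus the case at infinity. This is where simply-lacedness enters, not, as you suggest, in freeness with expected exponents of the \emph{restrictions} to the added hyperplanes. Finally, you elide the lower bound in this regime: the paper gets non-freeness by localizing at an $A_2$-parabolic and invoking Theorem \ref{theoremAFV} together with Proposition \ref{local}.
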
 

In fact the following result was proved in \cite{AFV}.

\begin{theorem}[\cite{AFV}]
Conjecture \ref{conj:AFV} is true when $\Phi$ is of type $A_2$.
\label{theoremAFV}
\end{theorem}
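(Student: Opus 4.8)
The plan is to split the equality $\pd_S(D(c\A))=1$ into the two bounds $\pd_S(D(c\A))\le 1$ and $\pd_S(D(c\A))\ge 1$, of which only the second carries content. Since $\Phi$ is of type $A_2$ we have rank $\ell=2$, so the cone $c\A$ is a central arrangement in $\K^3$; then $D(c\A)$ is a reflexive module over the $3$-dimensional regular ring $S$, hence $\depth_S D(c\A)\ge 2$, and the Auslander--Buchsbaum formula gives $\pd_S D(c\A)=3-\depth_S D(c\A)\le 1$. Thus $\pd_S D(c\A)\in\{0,1\}$, and it equals $0$ precisely when $c\A$ is free. The whole content is therefore to prove that $c\A$ is \emph{not} free for $j\ge 2$. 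For this I would invoke Terao's Factorization Theorem, which forces a free arrangement to have characteristic polynomial $\chi(c\A,t)=\prod_i(t-d_i)$ with the exponents $d_i\in\Z_{\ge 0}$; so it suffices to show that $\chi(c\A,t)$ admits no such factorization.

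Next I would compute $\chi(c\A,t)$ combinatorially by viewing $c\A$ as a line arrangement in $\P^2$ with coordinates $[u:v:z]$. Writing $I=\{-k,\ldots,k+j\}$ and $N=|I|=2k+j+1$, the lines are $u=mz$, $v=mz$ and $(u+v)=mz$ for $m\in I$, together with the line at infinity $z=0$, so $c\A$ has $3N+1$ hyperplanes. The only task is to record the multiple points: the three directions $[0:1:0]$, $[1:0:0]$, $[1:-1:0]$, each lying on $N+1$ lines (its parallel family together with $z=0$), and the affine points in the chart $z=1$, which form a triangular pattern of double points with triple points occurring exactly at those $(m,m')$ with $m,m'\in I$ and $m+m'\in I$. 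Counting these triple points by inclusion--exclusion gives $T=N^2-\binom{k+1}{2}-\binom{k+j+1}{2}$. Using the Möbius values $\mu(X)=n_X-1$ at a point lying on $n_X$ lines, the middle coefficient of $\chi$ is $\sum_X(n_X-1)=3N^2+3N-T$, and factoring out $(t-1)$ leaves the quadratic $t^2-3N\,t+(3N^2-T)$ whose roots would be the two nontrivial exponents.

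The decisive step is the discriminant of this quadratic, which after the cancellations works out to $(3N)^2-4(3N^2-T)=4T-3N^2=1-j^2$. For $j\ge 2$ this is strictly negative, so the quadratic factor has no real root, a fortiori no nonnegative integer root, and $\chi(c\A,t)$ cannot take the form $\prod_i(t-d_i)$ demanded by Terao's theorem. Hence $c\A$ is not free, $\pd_S D(c\A)\ge 1$, and combining with the upper bound yields $\pd_S D(c\A)=1$. The main obstacle here is purely bookkeeping: correctly enumerating the triple points $T$ — in particular pinning down the two boundary corrections $\binom{k+1}{2}$ and $\binom{k+j+1}{2}$ that arise from the asymmetry of $I$ about the origin — and checking that all lower-order terms cancel to leave exactly $1-j^2$. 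As a consistency check, the same formula returns discriminant $1$ for $j=0$ (the free extended Catalan arrangement) and $0$ for $j=1$ (the free extended Shi arrangement), so the threshold $j\ge 2$ for non-freeness is precisely the one the discriminant predicts.
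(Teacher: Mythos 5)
Your proposal is correct, and it is genuinely different from what the paper does: the paper offers no proof at all for this statement, importing it as Theorem \ref{theoremAFV} from \cite{AFV}, where the result is obtained as part of a finer analysis of the logarithmic modules of deformed $A_2$ arrangements (explicit resolutions and bundle-theoretic data on $\P^2$, in the spirit of \S 5 of this paper). Your route is more elementary and self-contained. The upper bound is sound: for $\ell=2$ the cone lives in $\K^3$, $D(c\A)$ is reflexive, hence of depth at least $2$, and Auslander--Buchsbaum gives $\pd_S D(c\A)\le 1$; this correctly reduces the conjecture for $A_2$ to non-freeness. Your combinatorics also checks out: with $N=2k+j+1$ the cone has $3N+1$ lines, the only multiple points are the three points at infinity of multiplicity $N+1$ and the affine triple points counted by $T=N^2-\binom{k+1}{2}-\binom{k+j+1}{2}$, so $b_2=3N+3N^2-T$ and the quadratic factor of $\chi$ is $t^2-3Nt+(3N^2-T)$; I verified that its discriminant simplifies to $4T-3N^2=1-j^2$, which is negative exactly for $j\ge 2$, so Terao's factorization theorem rules out freeness (a spot-check at $k=0$, $j=2$ gives $T=6$, discriminant $-3$, consistent). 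Your boundary checks at $j=0,1$ correctly recover the integer exponents of the extended Catalan and Shi arrangements from Theorem \ref{thm:CatShi}. What your argument buys is a short, purely combinatorial proof of $\pd_S D(c\A)=1$ valid for all $k\ge 0$, $j\ge 2$; what it gives up, relative to \cite{AFV}, is any information beyond the projective dimension, such as generators, graded Betti numbers, or the structure of the minimal free resolution, which is the kind of refinement the present paper pursues in Theorems \ref{A3case} and \ref{resolution-B2}.
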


The first aim of this article is to show the next result, which proves a version of Conjecture \ref{conj:AFV}, valid for simply-laced root systems. In the result, the root system $\Phi$ is not necessarily irreducible, however, if it is, then $\Phi$ is of type type $ADE$. 

\begin{theorem} \label{main}
Let $k\ge 0$ and let $\Phi$ be a simply-laced root system of rank $\ell \ge 2$.
Then $\pd_S (D(c\A^{[-k,k+2]}_{\Phi^+}))=1$.
So Conjecture \ref{conj:AFV} holds for $\Phi$.
\end{theorem}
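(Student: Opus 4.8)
The plan is to pass to the hyperplane at infinity of the cone and reduce the whole statement to a codimension-two computation on the Weyl arrangement. Write $S=\K[x_1,\dots,x_\ell,z]$, let $H_\infty=\{z=0\}\in c\A$ be the cone hyperplane, and set $S':=S/(z)=\K[x_1,\dots,x_\ell]$. Since $D(c\A)$ is a submodule of the free module $\Der S$ it is torsion-free, so $z$ is a nonzerodivisor on it; writing $M=D(c\A)$ and $Q:=M/zM$, the standard behaviour of depth under a regular element gives $\depth_S M=\depth_{S'}Q+1$, whence by Auslander--Buchsbaum on $S$ and on $S'$
$$
\pd_S D(c\A)=\ell-\depth_{S'}Q=\pd_{S'}Q .
$$
Thus it suffices to prove that the $S'$-module $Q$ is not free and satisfies $\depth_{S'}Q=\ell-1$, i.e. $\pd_{S'}Q=1$.

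First I would identify the Ziegler restriction of $c\A$ along $H_\infty$. Every hyperplane $\{\alpha=mz\}$ ($\alpha\in\Phi^+$, $-k\le m\le k+2$) restricts to $\{\alpha=0\}$ on $H_\infty$, and for a fixed $\alpha$ exactly the $2k+3$ values $m\in\{-k,\dots,k+2\}$ occur; since in a root system only $\pm\alpha$ are proportional to $\alpha$, the Ziegler multiplicity is the constant $2k+3$. Hence the Ziegler restriction is the multi-Weyl arrangement $(\A_{\Phi^+},2k+3)$, which is free by Terao's theorem on Coxeter multiarrangements with constant multiplicity. The Ziegler restriction map then fits $Q$ (after splitting off the Euler derivation, which accounts for the rank discrepancy) into a short exact sequence
$$
0\to \bar Q\to D(\A_{\Phi^+},2k+3)\to C\to 0
$$
with free middle term and $\pd_{S'}\bar Q=\pd_{S'}Q$. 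Because the middle term is free, $\pd_{S'}Q=\max(\pd_{S'}C-1,0)$, so everything comes down to showing that the cokernel $C$ is a nonzero Cohen--Macaulay $S'$-module of codimension exactly two, equivalently $\pd_{S'}C=2$.

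The module $C$ measures the failure of the Ziegler restriction to be surjective, and is supported on the singular locus of the Weyl arrangement $\A_{\Phi^+}$, i.e. on its flats of codimension $\ge 2$. At the generic point of a codimension-two flat $X$ the roots vanishing on $X$ form a rank-two subsystem $\Phi_X$, and here the simply-laced hypothesis is decisive: the only rank-two subsystems of an $ADE$ system are $A_1\times A_1$ and $A_2$. Localising the entire picture at $X$ turns $c\A$ into the cone of the rank-two deformation $\A^{[-k,k+2]}_{\Phi_X^+}$, so $C$ is governed locally by the corresponding rank-two cokernel. For $\Phi_X=A_1\times A_1$ the local arrangement is a cone of a product of rank-one arrangements, hence free, and $C$ vanishes there; for $\Phi_X=A_2$ Theorem \ref{theoremAFV} gives $\pd=1$ for $c\A^{[-k,k+2]}_{A_2}$, so the local cokernel is nonzero and of the expected codimension-two type. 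In particular $C\neq0$, which already yields non-freeness and $\pd_S D(c\A)\ge 1$.

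The main obstacle is to promote this local, generic-point analysis to the global statement that $C$ is Cohen--Macaulay of codimension two: one must show that $C$ has no embedded components and, crucially, no contribution from the flats of codimension $\ge 3$ that would raise $\pd_{S'}C$ above two (equivalently, that the intermediate local cohomology $H^i_{\mathfrak{m}'}(Q)$ vanishes for $i\le \ell-2$). I expect this to require a careful comparison, along a chain of flats, between $C$ and the cokernels of the rank-two models, using that the restrictions of $c\A^{[-k,k+2]}$ to the deeper flats are again deformations of the same shape; the simply-laced hypothesis keeps all the rank-two links of type $A_1\times A_1$ or $A_2$, so that Theorem \ref{theoremAFV} and the product case suffice to control every step. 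Once $\pd_{S'}C=2$ is established, the displayed identities give $\pd_S D(c\A)=\pd_{S'}Q=1$, proving the theorem and hence Conjecture \ref{conj:AFV} for simply-laced $\Phi$.
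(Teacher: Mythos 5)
Your reduction scheme is coherent as far as it goes, and it is a genuinely different route from the paper's: the paper never passes to the Ziegler restriction, but instead deletes the hyperplanes $\alpha+(k+1)z=0$ one at a time from the free extended Shi arrangement $c\A^{[-k-1,k+2]}_{\Phi^+}$ (Theorem \ref{thm:CatShi}) and propagates $\pd_S\le 1$ through each deletion via the exact $B$-sequence (Theorems \ref{FST} and \ref{pd1}). Your bookkeeping is essentially correct, modulo one fixable imprecision: the restriction mod $z$ of the full module $D(c\A)$ does \emph{not} land in $D(\A_{\Phi^+},2k+3)$; one must first replace $\theta$ by $\theta-\frac{\theta(z)}{z}\theta_E$ so as to work with $D_{H_\infty}(c\A)=\{\theta\in D(c\A)\mid \theta(z)=0\}$, whose restriction does satisfy $\alpha^{2k+3}\mid\overline{\theta}(\alpha)$; your phrase ``after splitting off the Euler derivation'' gestures at this correctly. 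Granting that, the identification of the Ziegler restriction as the constant-multiplicity Coxeter multiarrangement $(\A_{\Phi^+},2k+3)$, its freeness by Terao, the depth/Auslander--Buchsbaum translation, and the codimension-two local analysis (only $A_1\times A_1$ and $A_2$ occur in $ADE$, the latter giving $C\neq 0$ via Theorem \ref{theoremAFV}) are all sound; in particular you do obtain non-freeness, i.e.\ $\pd_S D(c\A)\ge 1$, matching the paper's use of Proposition \ref{local} and Theorem \ref{theoremAFV}.

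The genuine gap is that the entire content of the theorem is the upper bound, which in your formulation is exactly the claim that $C$ is Cohen--Macaulay of codimension two (equivalently $H^i_{\mathfrak{m}'}(Q)=0$ in the intermediate range), and at this point you write ``I expect this to require a careful comparison along a chain of flats'' --- that is a statement of the problem, not a proof. Genericity along codimension-two flats does not propagate: depth of $C$ can drop at flats of codimension $\ge 3$, whose localizations are deformations of the rank-three parabolics $A_1^3$, $A_1\times A_2$ and $A_3$, and even establishing $\pd\le 1$ for all such localizations would not suffice, since projective dimension is not a local invariant on the punctured spectrum (a locally free arrangement need not satisfy $\pd_S D(\A)\le 1$; one needs genuinely global cohomological vanishing). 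This missing step is precisely where the paper's labor is concentrated: Proposition \ref{keyprop} performs a detailed case analysis, using Stanley's characterization of non-free (generic) subarrangements of $A_3$ and inequalities among the levels $t_a,t_b,t_c$, to verify local freeness in codimension three along each hyperplane in the deletion order, which is what makes Theorem \ref{pd1} applicable at every step. Your proposal would need an analogue of exactly that analysis to conclude $\depth_{S'}C=\ell-2$; as written it proves the lower bound $\pd_S D(c\A)\ge 1$ together with a correct reformulation of the remaining problem, but not the theorem.
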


A second objective here is to give an explicit resolution of $D(\A)$ when $\A$ is a deformation of the braid arrangement having projective dimension one, in three dimensions, which is to say that $\A$ is a deformation of the Weyl arrangement of type $A_3$. 
To state it, let us first recall that for an finitely generated $S$-module $M$, one writes a minimal graded free resolution of $M$ as
\begin{equation}
\label{resolution:M}    
0 \longrightarrow
\bigoplus_{i \in \Z} S[-i-r]^{\beta_{r,i}}
\longrightarrow \cdots \longrightarrow
\bigoplus_{i \in \Z} S[-i]^{\beta_{0,i}}
\longrightarrow M
\longrightarrow 0.
\end{equation}
The integers $\beta_{i,j}$ are called the homogeneous Betti numbers of $M$. Given $i \in \Z$ such that $\beta_{0,i} \ne 0$, respectively, $\beta_{1,i} \ne 0$, we say that $M$ has $\beta_{0,i}$ generators, respectively $\beta_{1,i}$ syzygies, in degree $i$.
Here, we implicitly assume that $\beta_{r,i} \ne 0$ for some $i\in \Z$, so that $\pd_S(M)=r$.
Also, we consider the following submodule of the derivation module $D(\A)$
$$
D_0(\A):=\{\theta \in D(\A) \mid 
\theta(Q(\A))=0\}.
$$
Let
$\theta_E:=\sum_{i=1}^\ell x_i \partial_{x_i}$ be the Euler derivation.  Assuming $\A \neq \emptyset$, we have a decomposition:
$$
D(\A)=S\theta_E \oplus D_0(\A).
$$
We look at the logarithmic derivation module $D_0(\A)$ with $\A=c\A_{\Phi^+}^{[-k,k+2]}$, when $\Phi$ is of type $A_3$. This arrangement is also known as a deformation of the braid arrangement. We obtain the following result, see Theorem \ref{Aellpd1}.
\begin{theorem} \label{A3case}
Let $\Phi$ be the $A_3$ root system, $\Phi^+$ its set of positive roots and $\A = c\A_{\Phi^+}^{[-k,k+2]}$ with $k \ge 0$.
Then $D_0(\A)$ has a minimal graded free resolution of the form:
$$
0
\rightarrow 
S[-4k-8]^3
\rightarrow 
S[-4k-7]^6
\rightarrow 
D_0(\A) \rightarrow 0.
$$
\end{theorem}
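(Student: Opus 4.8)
The plan is to deduce everything from Theorem~\ref{main} together with a first Chern class computation, reducing the entire statement to the determination of a minimal generating set of $D_0(\A)$; the syzygies are then forced by numerics.

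First I would reduce to $D_0$. By Theorem~\ref{main}, $\pd_S(D(\A))=1$, and since $D(\A)=S\theta_E\oplus D_0(\A)$ with $S\theta_E\cong S[-1]$ free, the summand inherits $\pd_S(D_0(\A))=1$ and $\rank D_0(\A)=3$. Hence its minimal graded free resolution has the shape $0\to F_1\to F_0\to D_0(\A)\to 0$ with $F_0,F_1$ free and $\rank F_0-\rank F_1=3$, so it suffices to determine the degrees and number of minimal generators (that is, $F_0$); then $F_1$ will be pinned down. Next I would record the only easy numerical input: the cone $c\A^{[-k,k+2]}_{\Phi^+}$ has $|\A|=6(2k+3)+1=12k+19$ hyperplanes, so $\det\widetilde{D(\A)}\cong\mathcal O_{\P^3}(-|\A|)$ gives $c_1(\widetilde{D(\A)})=-(12k+19)$, and subtracting the contribution $-1$ of $S\theta_E\cong\mathcal O_{\P^3}(-1)$ yields $c_1(\widetilde{D_0(\A)})=-(12k+18)$. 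Read off the resolution, this is the single relation $\sum(\text{generator degrees})-\sum(\text{syzygy degrees})=12k+18$.

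The heart of the matter is to show that $D_0(\A)$ is minimally generated by exactly six elements, all in degree $4k+7$; since $D_0(\A)$ has depth $3$ it is saturated and equals $\Gamma_*$ of its sheaf, so this amounts to $H^0(\P^3,\widetilde{D_0(\A)}(j))=0$ for $j\le 4k+6$ and $=6$ for $j=4k+7$. Here $\pd=1$ already helps: depth $3$ forces $H^2_{\mathfrak m}(D_0(\A))=0$, hence $H^1(\P^3,\widetilde{D_0(\A)}(j))=0$ for all $j$. To kill $H^0$ in the low twists I would establish (strict) slope stability of the rank-$3$ reflexive sheaf $\widetilde{D_0(\A)}$, of slope $-(4k+6)$: a nonzero section of a twist by $j$ produces a rank-one subsheaf of nonnegative degree, which for $j\le 4k+6$ contradicts stability, so there are no generators of degree $\le 4k+6$. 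For the count in degree $4k+7$ I would use Riemann--Roch, $\chi(\widetilde{D_0(\A)}(4k+7))=6$, where the Chern classes $c_2,c_3$ are supplied by the characteristic polynomial of the affine deformation computed by the finite-field method and $\chi(c\A,t)=(t-1)\,\chi(\A^{[-k,k+2]}_{\Phi^+},t)$; combined with the vanishing of $H^1,H^2,H^3$ in that twist this gives $\dim_\K D_0(\A)_{4k+7}=6$. It is suggestive, and I would exploit it when exhibiting the generators explicitly, that $6=|\Phi^+|$ and $3=\ell$: I expect a generating family indexed by the positive roots, built from the outermost level hyperplanes, with exactly three independent relations. (As a sanity check, $4k+7$ is the top exponent of the free extended Catalan arrangement $\A^{[-(k+1),k+1]}_{\Phi^+}$, which has the same hyperplane count; but the two characteristic polynomials, and in particular $c_2$, differ, reflecting that our arrangement is not free.)

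Finally I would close. Once $F_0=S[-4k-7]^6$ is known, $\pd=1$ makes $F_1$ free of rank $6-3=3$; minimality forces every syzygy degree to be $\ge 4k+8$, while the $c_1$ relation gives $\sum(\text{syzygy degrees})=6(4k+7)-(12k+18)=3(4k+8)$, and three integers each $\ge 4k+8$ summing to $3(4k+8)$ are all equal to $4k+8$. This yields $0\to S[-4k-8]^3\to S[-4k-7]^6\to D_0(\A)\to 0$. The \emph{main obstacle} is precisely the middle step: the coarse invariants ($\pd$, rank, $c_1$) leave open extra cancelling generator--syzygy pairs, and removing that ambiguity requires the genuine input—the stability/section-vanishing statement below degree $4k+7$ and the exact count $6$ there via Riemann--Roch and higher-cohomology vanishing.
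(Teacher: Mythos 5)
Your proposal is not a proof as it stands: the load-bearing steps are asserted rather than established, and even granting them they do not pin down the resolution. Concretely: (1) the slope stability of the rank-$3$ reflexive sheaf $\widetilde{D_0(\A)}$ is exactly the kind of statement one usually extracts \emph{from} a known resolution (as the paper does for $B_2$ via Hoppe's criterion after Theorem~\ref{resolution-B2}); proving it beforehand is circular with your goal, since for rank $3$ Hoppe's criterion needs $H^0$-vanishing for both the sheaf and its second exterior power in the appropriate twists, which is the very information you are trying to derive. (2) Your Riemann--Roch count needs $c_2$ and $c_3$, and the identification of these with coefficients of $\chi(\A;t)$ is only valid when $\A$ is locally free; for deformations of $A_3$ this is a genuine issue, since codimension-three localizations can be generic (Stanley-type non-free) subarrangements of $A_3$ --- this is precisely the danger that Proposition~\ref{keyprop} has to rule out, and it does so only along the specific hyperplanes being deleted, not globally. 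Without local freeness, $\widetilde{D_0(\A)}$ is merely reflexive and $c_3$ acquires corrections at the non-locally-free points. (3) The vanishing of $H^2$ and $H^3$ in the twist $4k+7$ does not follow from anything you cite: Auslander--Buchsbaum plus $\pd_S=1$ gives depth $3$, hence only $H^1_*(\widetilde{D_0(\A)})=0$; the higher cohomology in specific twists would require a regularity bound you have not supplied. (4) Most seriously, even with all of (1)--(3), knowing $h^0(\widetilde{D_0(\A)}(j))$ for every $j$ only determines the Hilbert function of the saturated module $D_0(\A)$, and the Hilbert function is blind to cancelling generator--syzygy pairs: it shows there are exactly six minimal generators in degree $4k+7$, but it cannot exclude further generators in degrees $\ge 4k+8$ matched by extra syzygies. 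Ruling those out needs generation in degree $4k+7$, e.g.\ via $(4k+7)$-regularity, i.e.\ $H^2(\widetilde{D_0(\A)}(4k+5))=H^3(\widetilde{D_0(\A)}(4k+4))=0$ --- again unproven. Your closing numerics (three syzygies, each of degree $\ge 4k+8$, summing to $3(4k+8)$ by the $c_1$ relation) are fine, but only once $F_0=S[-4k-7]^6$ is secured, which your plan does not achieve.

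For contrast, the paper proves the statement purely by addition--deletion algebra, with no cohomology at all: starting from the free extended Shi cone $c\A^{[-k-1,k+2]}_{\Phi^+}$ with exponents $(1,4k+8,4k+8,4k+8)$ (Theorem~\ref{thm:CatShi}), it deletes $x_1-x_4=(-k-1)z$, $x_1-x_3=(-k-1)z$ and $x_2-x_4=(-k-1)z$ while preserving freeness, using Theorems~\ref{MDT}, \ref{MDT2} and \ref{Ycriterion} with \ref{Teraodeletion}, arriving at exponents $(1,4k+7,4k+7,4k+7)$; it then deletes the three hyperplanes $x_i-x_{i+1}=(-k-1)z$ simultaneously, checking $|\A_{H_i\cap H_j}|=2$ and $|\A_2|-|\A_2^{H_i}|=4k+8$, so that the new SPOG multiple deletion theorem (Theorem~\ref{multiple-deletion}) outputs the minimal free resolution directly, with the three degree-$(4k+8)$ syzygies arising explicitly as the relations $\alpha_{H_i}\varphi_i\in D(\A_2)$. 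Your sheaf-theoretic route is a legitimately different strategy in spirit, but the four gaps above --- especially stability and the regularity/cancelling-pairs issue --- are where the actual mathematical content lives, and they are not filled by the tools you invoke.
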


As a third goal of this paper, we start the analysis of non-simply laced root systems, by computing the logarithmic  derivation module of deformed Weyl arrangements of type $B_2$.
Let $k \ge 2$ be an integer and let $r \in \{0,1\}$ be the remainder of the division of $k$ by $2$. Write $k=2m+r$ for some integer $m$.
The following complements the main result of \cite{AFV}.
Let $a \le b \in \Z$ and let
 $A_{\B_2}$ be defined by 
$xy(x^2-y^2)=0$ in $\K^2$ and let 
$$
\A^{[a,b]}_{\Phi^+}:=\{\alpha=t \mid t \in \Z,
a \le t \le b,\ \alpha \in \Phi^+_{B_2}\}.
$$

\begin{theorem} \label{resolution-B2}
Let $\Phi$ be the root system of the type $B_2$, $\Phi^+$ its set of positive roots. Then for $\A:=c\A_{\Phi^+}^{[-k,k+j]}$ with $j=2m+r$, $r \in \{0,1\}$,
$D_0(\A)$ has projective dimension one and its graded Betti numbers are:
\[
\begin{array}{c||c|c}
\mbox{degree $i$} & \mbox{generators $\beta_{0,i}$} & \mbox{syzygies $\beta_{1,i}$} \\
\hline
\hline
    1+5m+3r+4k & 1+r& \\
    \hline
    2+5m+3r+4k & 2 & 1+r\\
    \hline
    3+5m+3r+4k & 2 & 2\\    
    \hline
     \vdots & \vdots & \vdots \\
     \hline
    1+6m+3r+4k & 2 & 2
\end{array}
\]
\end{theorem}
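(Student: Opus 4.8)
The plan is to projectivize and study $D_0(\A)$ as the module of global sections of a rank-two vector bundle on $\mathbb{P}^2$. Writing $S=\K[x,y,z]$, the coned arrangement $\A$ is the union of four pencils of lines in $\mathbb{P}^2$, whose centres are the four harmonic points cut out on the line at infinity $\ell_0=\{z=0\}$ by the directions $x,y,x-y,x+y$, each pencil consisting of $m_0:=2k+j+1$ members, together with $\ell_0$ itself. Since $\A$ is central and essential in $\K^3$, the module $D(\A)$ is reflexive, hence of depth $\ge 2$, so $\pd_S D(\A)\le 1$ by Auslander--Buchsbaum, and likewise for its summand $D_0(\A)$; equivalently $E:=\widetilde{D_0(\A)}$ is a rank-two vector bundle on $\mathbb{P}^2$ and $\pd_S D_0(\A)=1$ exactly when $E$ is non-split, i.e. when $\A$ is non-free. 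I stress at the outset that the Hilbert function of $D_0(\A)$, which is encoded by the characteristic polynomial of $\A$, determines only the differences $\beta_{0,i}-\beta_{1,i}$; since the asserted table has degrees carrying both a generator and a syzygy, the individual Betti numbers cannot be recovered from numerics alone, and a genuine cohomological computation is unavoidable.

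First I would pin down $E$ numerically and along $\ell_0$. The first Chern class is forced by $|\A|$, and $c_2(E)$ is read from the characteristic polynomial, equivalently from the multiple points of the line arrangement. To control $E$ on $\ell_0$ I would use the Ziegler restriction to $H_0=\{z=0\}$: the induced multiarrangement is precisely the $B_2$ arrangement endowed with the constant multiplicity $m_0=2k+j+1$, whose exponents are known explicitly for this harmonic four-line configuration. By Ziegler's theorem these exponents give the splitting type $E|_{\ell_0}\cong\cO_{\ell_0}(-e_1)\oplus\cO_{\ell_0}(-e_2)$. Comparing $e_1e_2$ with $c_2(E)$ shows $E$ is non-split, which already yields $\pd_S D_0(\A)=1$; simultaneously the pair $(e_1,e_2)$ fixes the extreme degrees of the table and decides whether $\ell_0$ is a jumping line.

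The heart of the matter is the intermediate cohomology, that is the Hartshorne--Rao module $H^1_*(E)=\bigoplus_t H^1(\mathbb{P}^2,E(t))$. For a rank-two bundle with $\pd_S\Gamma_*(E)=1$ this module, together with the Hilbert function, determines the minimal free resolution $0\to F_1\to F_0\to D_0(\A)\to 0$: sheafifying and using that $H^1_*$ of a sum of line bundles vanishes on $\mathbb{P}^2$ identifies $H^1_*(E)$ with the kernel of the top-cohomology map induced by the first syzygy matrix, so its graded pieces record exactly the coincident generators and syzygies of the staircase. I would compute $H^1_*(E)$ by a double induction. The dependence on $k$ is a uniform degree shift, obtained by an addition--deletion argument comparing $[-k,k+j]$ with $[-k{-}1,k{+}1{+}j]$, which adds two balanced slabs of four lines and translates the whole table. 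The dependence on $m$ builds the staircase: passing from $[-k,k+j]$ to $[-k,k+j+2]$ and computing the Euler (restriction) multiplicity of each added line against the harmonic $B_2$ data appends one rung of two generators and two syzygies, while the parity $r$ enters only through the single extra generator and syzygy in the first rows. A small base case (or the available $A_2$/$A_3$ computations as a check) starts the induction.

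The main obstacle is precisely this cohomological step. Because the Hilbert function cannot separate the coincident Betti numbers, I must follow the actual module structure of $H^1_*(E)$ through the induction and prove that no generator--syzygy cancellation occurs, i.e. that the resolution stays minimal; this is exactly where the special harmonic position of the four $B_2$ directions and the constancy of the Ziegler multiplicity $m_0$ are indispensable, a generic configuration or non-constant multiplicities producing a different, balanced staircase. Establishing that each addition--deletion step is exact degree by degree, rather than merely numerically consistent, is the delicate point; I would secure it either by exhibiting explicit logarithmic derivations realizing the new generators and checking their syzygies directly, or by showing that the relevant connecting maps in the cohomology sequences have locally constant rank, so that the long exact sequences break into the short exact sequences recorded by the table.
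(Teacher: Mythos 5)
Your framing is sound as far as it goes: sheafifying to a rank-two bundle $\E$ on $\P^2$, getting $\pd_S D_0(\A)\le 1$ from reflexivity and Auslander--Buchsbaum, and detecting non-freeness by comparing the splitting type on $z=0$ (via the Ziegler restriction, the $B_2$ multiarrangement with constant multiplicity $2k+j+1$) against $c_2$ --- this is all legitimate, and part of it parallels what the paper does later when discussing stability and jumping lines. But for the actual statement at hand --- the Betti table --- your proposal is a program rather than a proof, and the gap sits exactly where you yourself locate it. Since every middle degree of the table carries both a generator and a syzygy, the Hilbert function gives only $\beta_{0,i}-\beta_{1,i}$, so everything hinges on proving, through your ``double induction'' on $k$ and on $m$, that each addition--deletion step is exact degree by degree and that no generator--syzygy cancellation occurs. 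You name two possible strategies (explicit derivations, or constant-rank connecting maps) but execute neither; moreover, your induction steps move several lines at once (four lines in all four directions for $k\to k+1$, two full levels for $j\to j+2$), and there is no mechanism ensuring the needed $H^1$-vanishing persists along such a coarse path. Your induction on $m$ also lacks a workable base case: starting ``from the deformation itself'' leaves the small cases as unproved computations.

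The paper resolves precisely these two difficulties by a different organization. First, it anchors the induction at a genuinely free arrangement: Proposition \ref{CatShi} observes that after the translation $x\mapsto x+kz$, the arrangement $\B$ obtained from $\A$ by shifting part of the $y$-pencil is an extended Catalan ($r=0$) or Shi ($r=1$) arrangement of type $B_2$, hence free with known exponents by Theorem \ref{thm:CatShi}; this coordinate-change trick is the missing base case in your plan. Second, it passes from $\B$ to $\A$ one line at a time within the $y$-pencil (delete $H_u:y=(-k-m+u)z$, add $L_u$ at the top), and each single-line move is governed by Lemma \ref{lemma1} (deletion, via the $B$-sequence) or Lemma \ref{lemma2} (addition, via multiplication by $\alpha_H$), whose hypotheses are elementary numerical inequalities comparing $d=|\A|-1-|\A^H|$, resp.\ $d=|\A^H|-1$, with the degrees in the current resolution. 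Because the specific order of moves makes the new generator degree strictly exceed all previous ones at every step, minimality is automatic and cancellation is impossible --- this is exactly the degree-by-degree control your sketch defers. The $k$-dependence then requires no separate shift argument: $k$ enters only through $t=4k+4m$ in the induction. To complete your approach you would in effect have to re-decompose your slab-wise steps into single-line moves chosen so that the vanishing conditions persist, which is the actual content of the paper's proof.
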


Finally, we analyze the vector bundle $\E$ on $\P^2=\P^2_{\K}$ obtained by sheafifying $D_0(\A)$, with $\A = c\A^{[-k,k+j]}_{\Phi^+}$.
We set $\F=\E(4k+2j+2)$ and we note that $c_1(\F)=0$. 
For any integer $b > 0$, a $b$-jumping line of $\E$ is a line $\ell \subset \P$ such that $\F|_\ell \simeq \cO_\ell(-b) \oplus \cO_\ell(b)$.
Given a second integer $k'$,  we denote by $\E'$ the sheafification of $D_0(\A')$, with $\A' = c\A^{[-k',k'+j]}_{\Phi^+}$.
From Theorem \ref{resolution-B2} one computes the Chern classes of $\F$ and finds that $\F$ is a slope-stable bundle in the sense of Mumford-Takemoto for $j\ge 3$, so that a generic line is not jumping. It turns out that one can determine the $a$-jumping lines of $\E$, when $a$ reaches its maximum value, which is $j-1$, see Theorem \ref{B2:jump}. 
From this result we may see that, when $k \ne k'$, the two $S$-modules $D_0(\A)$ and $D_0(\A')[4(k'-k)]$ are not isomorphic when $k \ne k'$, even though they share the same graded Betti numbers, cf. Corollary \ref{not-isomorphic}. We summarize this in the next result.

\begin{theorem} \label{summary-B2}
Let $j = 2m+r \ge 3$, $r\in \{0,1\}$. The rank-2  bundle $\E$ is stable, with 
\[
c_1(\F)=0, \qquad c_2(\F)=2 m^2 + 2 m r + r -1.
\]
For $a > j-1$, there is no $a$-jumping line for $\E$ while  $L:\{y=(k+j)z\}$ and $H:\{y=(-k-1)z\}$ are the only $(j-1)$-jumping lines of $\E$.
For $k \ne k'$, $D_0(\A)$ and $D_0(\A')[4(k'-k)]$ have the same graded Betti numbers but are not isomorphic $S$-modules.
\end{theorem}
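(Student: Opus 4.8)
The plan is to assemble the statement from the minimal free resolution of Theorem \ref{resolution-B2} together with a splitting-type analysis on lines, and I expect the identification of the jumping lines to be the real obstacle. First I would record that $\E$ is genuinely a rank-$2$ vector bundle on $\P^2$: since $\pd_S D_0(\A)=1$ the module is reflexive, and a reflexive sheaf on the smooth surface $\P^2$ is locally free. To get the Chern classes, sheafify the resolution of Theorem \ref{resolution-B2} to a short exact sequence $0\to \bigoplus_j \cO_{\P^2}(-b_j)\to \bigoplus_i \cO_{\P^2}(-a_i)\to \E\to 0$, where the $a_i$ are the generator degrees and the $b_j$ the syzygy degrees read off from the Betti table (the syzygy in row $i$ contributing $\cO_{\P^2}(-(i+1))$). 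Additivity of the Chern character then expresses $c_1(\E)$ and $c_2(\E)$ through elementary symmetric functions of the $a_i,b_j$; twisting by $\cO_{\P^2}(4k+2j+2)$ and substituting $j=2m+r$, the linear terms cancel to give $c_1(\F)=0$, while the quadratic terms collapse (using $r^2=r$ for $r\in\{0,1\}$) to $c_2(\F)=2m^2+2mr+r-1$. This is a finite, if slightly tedious, summation over the arithmetic progressions of degrees in the table.

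Next I would establish stability. For a rank-$2$ bundle with $c_1=0$, slope-stability is equivalent to $H^0(\F)=0$: a destabilizing sub-line bundle would be some $\cO_{\P^2}(a)$ with $a\ge \mu(\F)=0$, forcing a nonzero section. After the twist the generators of $\F$ live in summands $\cO_{\P^2}(-a_i')$ with $a_i'\ge m+r-1$, and $m+r-1\ge 1$ precisely when $j=2m+r\ge 3$. Hence $H^0\big(\bigoplus_i\cO_{\P^2}(-a_i')\big)=0$; since $H^1$ of a line bundle on $\P^2$ vanishes, the long exact sequence of the restricted resolution yields $H^0(\F)=0$. Thus $\F$, and therefore $\E$, is stable for $j\ge 3$, and in particular a generic line is non-jumping.

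The substantive part is the determination of the jumping lines, which is Theorem \ref{B2:jump}; this is the step I expect to be hard. The approach is to restrict the presentation $\bigoplus_j\cO_{\P^2}(-b_j')\xrightarrow{\phi}\bigoplus_i\cO_{\P^2}(-a_i')\to\F\to 0$ to a line $\ell\cong\P^1$: when $\phi|_\ell$ remains injective the splitting type of $\F|_\ell$ is computed from the restricted matrix over $\P^1$, and a jump of order $b$ is controlled by the rank drop of the matrix of leading coefficients along $\ell$. One must bound the maximal attainable order by $j-1$, show the $a$-jumping locus is empty for $a>j-1$, and then cut out exactly where order $j-1$ is achieved. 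For the $B_2$ deformation this locus turns out to consist of the two extreme level lines, the top level $L:\{y=(k+j)z\}$ of the root $\alpha=y$ and the line $H:\{y=(-k-1)z\}$ just beneath the bottom level, the asymmetry of $\{L,H\}$ reflecting the asymmetry of the range $[-k,k+j]$. Proving that these two lines and no others carry the maximal jump is where the real effort lies, since it requires analysing the degeneration of the restricted presentation matrix rather than a clean cohomological count.

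Finally, for non-isomorphism I would exploit that the jumping-line locus is an isomorphism invariant of the bundle and is unchanged by twisting with a line bundle. The Betti numbers in Theorem \ref{resolution-B2} depend only on $m,r$ and on the uniform shift $4k$, so for the same $j$ (hence the same $m,r$) the modules $D_0(\A)$ and $D_0(\A')[4(k'-k)]$ have identical graded Betti numbers. Yet an isomorphism $D_0(\A)\cong D_0(\A')[4(k'-k)]$ would sheafify to $\E\cong\E'(4(k'-k))$, forcing $\E$ and $\E'$ to share their jumping lines. By the previous step the $(j-1)$-jumping lines of $\E$ have levels $\{k+j,-k-1\}$ and those of $\E'$ have levels $\{k'+j,-k'-1\}$; equating these unordered pairs gives either $k=k'$ or the swapped case $k'=-k-j-1<0$, impossible since $k,k'\ge 0$. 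For $k\ne k'$ the two loci therefore differ, contradicting the isomorphism. This is Corollary \ref{not-isomorphic}, and it completes the proof.
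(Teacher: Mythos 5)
Your Chern class computation, your stability argument, and your final non-isomorphism step are all sound and essentially coincide with the paper's route: the stability criterion you use (for rank $2$ with $c_1(\F)=0$, stability reduces to $H^0(\F)=0$, which the resolution gives exactly when $m+r>1$, i.e.\ $j\ge 3$) is Hoppe's criterion as in the paper, and your non-isomorphism argument via twist-invariance of jumping lines matches Corollary \ref{not-isomorphic} — indeed you treat the swapped case $\{k+j,-k-1\}=\{k'+j,-k'-1\}$, giving $k+k'=-j-1<0$, more carefully than the paper does. The genuine gap is the heart of the theorem: you never prove that $L$ and $H$ actually are $(j-1)$-jumping lines, that they are the \emph{only} ones, or that no line jumps with order exceeding $j-1$. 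You outline a strategy (restrict the presentation matrix to a line and analyse rank drops of leading coefficients) and then assert that "this locus turns out to consist of the two extreme level lines," explicitly conceding that the real effort lies there. Moreover, that strategy cannot be executed from the data you have in hand: the Betti table of Theorem \ref{resolution-B2} records only the degrees of the presentation, while the jumping behaviour depends on the actual entries of the matrix, to which your argument has no access.

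The paper supplies exactly the two missing inputs. First, the \emph{existence} of the jumps is obtained not by matrix analysis but by Theorem \ref{AFV}: computing $|\A^{H_u}|=4k+4m+r-2u+2$ and $|\A\cap L_u|=4k+4m+3r+2u+4$ against $|\A|=8k+8m+4r+1$ determines the splitting type of $\F$ on $H_u$ and $L_u$ directly, showing these are $(2u+r+1)$-jumping lines; in particular $H_{m-1}=H$ and $L_{m-1}=L$ are $(j-1)$-jumping. Second, the \emph{uniqueness} and the bound use structural information invisible in the Betti numbers: the inductive addition/deletion proof of Theorem \ref{resolution-B2} shows that the sheafified resolution contains diagonal $2\times 2$ blocks $M_u:\cL_u(-1)^{\oplus 2}\to\cL_u^{\oplus 2}$ whose entries are precisely the defining forms of $H_u$ and $L_u$. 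A $b$-jumping line $\ell$ yields a surjection $\F|_\ell\to\cO_\ell(-b)$, hence a nonzero map from some restricted summand $\cO_\ell(-m-r-u)$ (or $\cO_\ell(1-m-r)$) to $\cO_\ell(-b)$, forcing $u\ge b-m-r$; for $b>j-1$ this contradicts $u\le m-1$, and for $b=j-1$ it forces $u=m-1$, so the surjection factors through $\coker(M_{m-1})\simeq\cO_{L_{m-1}}(1-j)\oplus\cO_{H_{m-1}}(1-j)$ and $\ell\in\{L,H\}$. Without some substitute for these two inputs, your proposal establishes the Chern classes, the stability, and the reduction of non-isomorphism to the jumping-line computation, but not the jumping-line computation itself.
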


The organization of this article is as follows. In \S \ref{section:preliminaries} we introduce definitions and results mainly about arrangements of projective dimension zero (aka free) and one as well as some tools to analyze the behavior of resolutions of length one upon adding or removing hyperplanes. In \S \ref{section:simply-laced} we prove Theorem \ref{main}, see Theorem \ref{Aellpd1}. In \S \ref{section:A3} we give an explicit minimal free resolution of 
$D(\A^{[-j,j+2]}_{\Phi^+})$ when the root system is of type $A_3$, see Theorem \ref{k2case}, which shows Theorem \ref{A3case}. In \S \ref{section:B2} we look at deformed Weyl arrangements of type $B_2$ and show Theorem \ref{resolution-B2} and Theorem \ref{summary-B2}, where this last result follows from Theorem \ref{B2:jump} and Corollary \ref{not-isomorphic}.
\bigskip

\noindent{\textbf{Acknowledgments. }}
T. A. is partially supported by JSPS KAKENHI Grant Numbers JP23K17298 and JP25H00399.
D. F. partially supported by FanoHK ANR-20-CE40-0023, EIPHI ANR-17-EURE-0002, Bridges ANR-21-CE40-0017. 

\bigskip

\section{Background material and basic results}

\label{section:preliminaries}

In this section, we first collect some background material and fix the notation alongside with some conventions. 
Then, we show some basic theorems regarding arrangements of low projective dimension. These will be crucial in the proofs  of our main results concerning resolutions of arrangements of Weyl arrangements.

\subsection{Basic notions}

For an arrangement $\A$ in $V=\K^\ell$, let 
$$
L(\A):=\{\cap_{H \in \B} H \mid \B \subset \A\}
$$
be the \textbf{intersection lattice} of $\A$. For $X \in L(\A)$, define $\mu(X)$ by 
$\mu(V)=1$, and by 
$$
\mu(X):=-\sum_{Y \in L(\A),\ X \subsetneq Y} \mu(Y)
$$
for $X \neq V$. Then $\mu:L(\A) \rightarrow \Z$ is called the \textbf{M\"obius function}. Let 
$$
\chi(\A;t):=\sum_{X \in L(\A) } \mu(X)t^{\dim X}
$$
be the \textbf{characteristic polynomial} of $\A$. When $\A \neq \emptyset$, it is known that 
$\chi(\A;t)$ is divisible by $t-1$. Let $$
\chi_0(\A;t):=\chi(\A;t)/(t-1).$$

Let $$
D(\A):=\{\theta \in \Der S \mid \theta(\alpha_H) \in S\alpha_H\ 
(\forall H \in \A)\}.
$$
We say that $\A$ is \textbf{free} with $\exp(\A)=(d_1,\ldots,d_\ell)$ if 
$$
D(\A) \simeq \oplus_{i=1}^\ell S[-d_i].
$$
If $\A $ is not empty, there is a direct sum decomposition 
$$
D(\A)=S\theta_E \oplus D_0(\A),
$$
where 
$$
D_0(\A)=\{ \theta \in D(\A) \mid \theta(Q(\A))=0\}.
$$
So for a non-empty free arrangement $\A$, its exponents contain $1$. 
A very fundamental properties of a free arrangement are as follows. 

\begin{theorem}[\cite{Sa}]
Let $\theta_1,\ldots,\theta_\ell \in D(\A)$ be homogeneous 
derivations. Then $\theta_1,\ldots,\theta_\ell$ form a basis for $D(\A)$ if and only if $\theta_1,\ldots,\theta_\ell$ are independent over $S$, and 
$$
|\A|=\sum_{i=1}^\ell \deg \theta_i.
$$
\label{saito}
\end{theorem}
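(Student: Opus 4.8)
The plan is to route the argument through the coefficient matrix of the derivations and its determinant, the classical ``Saito determinant.'' Write $\theta_j = \sum_{i=1}^\ell f_{ij}\,\partial_{x_i}$ with $f_{ij} \in S$ homogeneous of degree $\deg \theta_j$, and form the matrix $M = (f_{ij})$. Then $\det M$ is homogeneous of degree $\sum_{j=1}^\ell \deg \theta_j$, and $\theta_1,\ldots,\theta_\ell$ are independent over $S$ precisely when $\det M \neq 0$. The whole criterion rests on one divisibility lemma: for any $\ell$-tuple of derivations in $D(\A)$, the polynomial $Q(\A)$ divides the determinant of their coefficient matrix. To prove it I would work one hyperplane at a time: choosing linear coordinates so that $\alpha_H = x_1$, the membership $\theta_j(\alpha_H) \in S\alpha_H$ reads $x_1 \mid f_{1j}$ for all $j$, so the first row of $M$ is divisible by $x_1 = \alpha_H$ and hence so is $\det M$; since distinct hyperplanes give pairwise coprime linear forms, $Q(\A) = \prod_{H \in \A} \alpha_H$ divides $\det M$.

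Granting the lemma, the implication from the numerical conditions to freeness is a Cramer's-rule computation. Suppose the $\theta_j$ are independent and $\sum_j \deg\theta_j = |\A|$. Then $\det M \neq 0$ and $\deg \det M = |\A| = \deg Q(\A)$; combined with $Q(\A) \mid \det M$ in the UFD $S$, this forces $\det M = c\,Q(\A)$ for some $c \in \K^\times$. Now take any $\theta = \sum_i g_i\,\partial_{x_i} \in D(\A)$ and solve $M\mathbf{h} = \mathbf{g}$ by Cramer: $h_j = \det M_j / \det M$, where $M_j$ is $M$ with its $j$-th column replaced by the column $(g_1,\ldots,g_\ell)$. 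The columns of $M_j$ are the coefficient vectors of $\theta_1,\ldots,\theta,\ldots,\theta_\ell$, all lying in $D(\A)$, so the lemma gives $Q(\A) \mid \det M_j$ and therefore $h_j \in S$. Hence $\theta = \sum_j h_j \theta_j$, the $\theta_j$ generate $D(\A)$, and being independent they form a free basis.

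For the converse I would argue through divisors. If the $\theta_j$ form a basis they are in particular independent, so $\det M \neq 0$, and it remains to show $\sum_j \deg\theta_j = |\A|$. Freeness identifies the inclusion $D(\A) \hookrightarrow \Der S \cong S^\ell$ with the map $\bigoplus_j S[-\deg\theta_j] \to S^\ell$ given by $M$, so that $\Der S / D(\A) \cong \coker M$. Localizing at the generic point of a hyperplane $H$ (coordinates $\alpha_H = x_1$), only the single condition $x_1 \mid \theta(\alpha_H)$ survives, so $D(\A)$ is cut out in $\Der S$ by the lone requirement $x_1 \mid g_1$, and the localized cokernel has length one over the discrete valuation ring $S_{\pp}$, where $\pp$ is the height-one prime of $H$. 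Thus the codimension-one support of $\coker M$ is $\sum_{H \in \A} [H]$, i.e. the divisor of $\det M$ equals the divisor of $Q(\A)$; since the length of $\coker M$ at a height-one prime $\pp$ equals $\ord_\pp(\det M)$, the two divisors agree and $\det M \doteq Q(\A)$ in the UFD $S$. Comparing degrees yields $\sum_j \deg\theta_j = \deg Q(\A) = |\A|$, as wanted.

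I expect the converse, the passage from ``basis'' to the degree equality, to be the main obstacle, since it is the only step that is not a direct determinantal manipulation: it hinges on the local length-one computation for $\Der S / D(\A)$ at the generic point of each hyperplane and on identifying the divisor of $\coker M$ with that of $\det M$. The divisibility lemma, although it is the technical heart of the criterion, is comparatively routine once one reduces to a single hyperplane in adapted coordinates.
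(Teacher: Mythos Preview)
Your proof is correct and is essentially the classical argument for Saito's criterion. Note, however, that the paper does not give its own proof of this statement: Theorem~\ref{saito} is merely quoted from \cite{Sa} as background, so there is no ``paper's proof'' to compare against. Your write-up reproduces the standard determinant-plus-Cramer argument (for the direction ``independent and $\sum\deg\theta_i=|\A|$ $\Rightarrow$ basis'') together with the localization computation at height-one primes (for the converse), which is exactly how this result is proved in the literature, e.g.\ in \cite{OT}. One small cosmetic remark: in the converse you could bypass the divisor/length argument by observing that, once the $\theta_j$ form a basis, each $\partial_{x_i}\in\Der S=D(\emptyset)$ lies in the $S$-span of the $\theta_j$, so the inverse matrix $M^{-1}$ has polynomial entries; combined with $Q(\A)\mid\det M$ this forces $\det M$ to be a unit times $Q(\A)$ directly. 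But your localization approach is equally valid and arguably more conceptual.
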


\begin{prop}[Theorem 4.37, \cite{OT}]
If $\A$ is free, then for all $X \in L(\A)$, considering the \textbf{localization} of $\A$ at $X$, namely
$$
\A_X:=\{H \in \A \mid X \subset H\},
$$
we have that
$\A_X$ is free.
\label{local}
\end{prop}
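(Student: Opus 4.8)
The plan is to prove freeness of $\A_X$ by a local-to-global argument: first one checks that $D(\A_X)$ agrees with the \emph{free} module $D(\A)$ after localizing at the prime ideal of $X$, and then one upgrades this single instance of local freeness to genuine freeness by exploiting homogeneity. Throughout, let $\pp = \pp_X \subset S$ denote the (prime) ideal of linear forms vanishing on $X$, so that for $H \in \A$ one has $\alpha_H \in \pp$ if and only if $X \subseteq H$, that is, $H \in \A_X$.

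First I would establish the localization identity $D(\A)_\pp = D(\A_X)_\pp$ inside $(\Der S)_\pp$. Writing $D(\A) = \ker\big(\Der S \to \bigoplus_{H \in \A} S/(\alpha_H)\big)$ with $\theta \mapsto (\theta(\alpha_H) \bmod \alpha_H)_H$, and using exactness of localization, one gets $D(\A)_\pp = \ker\big((\Der S)_\pp \to \bigoplus_{H} (S/(\alpha_H))_\pp\big)$. For $H \notin \A_X$ the form $\alpha_H$ is a unit in $S_\pp$, so the summand $(S/(\alpha_H))_\pp$ vanishes and imposes no condition; only the hyperplanes $H \in \A_X$ survive, whence the kernel on the right is exactly $D(\A_X)_\pp$. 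Since $\A$ is free, $D(\A)$ is a free $S$-module, so $D(\A)_\pp$, and therefore $D(\A_X)_\pp$, is a free $S_\pp$-module.

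Next I would convert local freeness at $\pp$ into freeness of the graded module, for which I first essentialize. Since $X \in L(\A)$ one has $\bigcap_{H \in \A_X} H = X$, and after a linear change of coordinates with $X = \{x_1 = \cdots = x_r = 0\}$, $r = \codim X$, every $\alpha_H$ with $H \in \A_X$ is a form in $x_1,\ldots,x_r$ only. Thus $\A_X$ is the product of an essential central arrangement $\B$ in $\K^r$ with an empty arrangement, so by the product decomposition of derivation modules $\A_X$ is free if and only if $D(\B)$ is free over $S'' = \K[x_1,\ldots,x_r]$. Under this decomposition $\pp$ is the extension of the irrelevant ideal $\mathfrak m = (x_1,\ldots,x_r)$ of $S''$, and the previous step together with faithfully flat descent along the local map $S''_{\mathfrak m} \hookrightarrow S_\pp$ yields that $D(\B)_{\mathfrak m}$ is free over $S''_{\mathfrak m}$. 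Finally, $D(\B)$ is a finitely generated \emph{graded} $S''$-module because $\B$ is central; for such a module freeness can be tested at the irrelevant ideal, as the minimal graded free resolution localizes at $\mathfrak m$ to a minimal free resolution and hence $\pd_{S''} D(\B) = \pd_{S''_{\mathfrak m}} D(\B)_{\mathfrak m} = 0$. Therefore $D(\B)$ is free, and so is $D(\A_X)$.

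The main obstacle is precisely the gap between the two halves: the localization identity is clean, but local freeness at the single prime $\pp_X$ is far from freeness. The device that makes it work is to essentialize so that $\pp_X$ becomes the irrelevant maximal ideal of a smaller polynomial ring, at which point the grading forces freeness. Getting this bookkeeping right---tracking the product decomposition, the descent along the flat extension $S'' \hookrightarrow S$, and the homogeneity of $D(\B)$---is where the real content lies. Note that Saito's criterion (Theorem \ref{saito}) is not needed for freeness itself, though it recovers $\exp(\A_X) = \exp(\B) \cup (0,\ldots,0)$ once freeness is in hand.
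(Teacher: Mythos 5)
Your proof is correct, and since the paper states this result as a citation of \cite[Theorem 4.37]{OT} without reproducing an argument, the right comparison is with the standard proof there: your route --- the identity $D(\A)_\pp = D(\A_X)_\pp$ from exactness of localization (hyperplanes with $\alpha_H \notin \pp$ impose no condition), followed by essentialization via the product decomposition so that $\pp$ becomes the extension of the irrelevant ideal $\mathfrak{m}$ of $\K[x_1,\ldots,x_r]$, where the graded local criterion upgrades freeness of $D(\B)_{\mathfrak{m}}$ to freeness of $D(\B)$ --- is essentially that same textbook argument. I see no gaps: the descent along the faithfully flat local map $S''_{\mathfrak{m}} \to S_\pp$, the extraction of $D(\B)_{\mathfrak{m}} \otimes_{S''_{\mathfrak{m}}} S_\pp$ as a direct summand of the free module $D(\A_X)_\pp$, and the observation that a minimal graded free resolution stays minimal after localizing at $\mathfrak{m}$ are all handled correctly.
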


The next result is very useful to determine the freeness of the arrangement which can be obtained by deleting a hyperplane from a free arrangement, see Theorem 4.49 in \cite{OT} for the proof.

\begin{theorem}[Terao's deletion theorem, \cite{T1}]
Let $H \in \A$ and $\A$ be free with $\exp(\A)=(d_1,\ldots,d_\ell)$. If $\A^H:=\{L \cap H \mid 
L \in \A \setminus \{H\}\}$ is free with $\exp(\A^H)=(d_1,\ldots,d_{i-1},\hat{d_i},d_{i+1},\ldots,d_\ell)$, then $\A \setminus \{H\}$ is free with $\exp(\A\setminus \{H\})=(d_1,\ldots,d_{i-1},d_i-1,d_{i+1},\ldots,d_\ell)$.
\label{Teraodeletion}
\end{theorem}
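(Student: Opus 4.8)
The plan is to prove Theorem~\ref{Teraodeletion} through the restriction map on derivation modules, which is the algebraic engine behind the whole addition--deletion circle of ideas. Write $\A'=\A\setminus\{H\}$ and $\A''=\A^H$, and choose coordinates so that $\alpha_H=x_\ell$, so that $S'':=S/\alpha_H S=\K[x_1,\dots,x_{\ell-1}]$ is the coordinate ring of $H$. First I would record the two elementary inclusions $\alpha_H D(\A')\subseteq D(\A)\subseteq D(\A')$: the right one holds because $\A'\subset\A$ imposes fewer conditions, the left one because multiplying by $\alpha_H$ automatically satisfies the single extra condition along $H$. Any $\theta\in D(\A)$ is tangent to $H$, so it restricts to a derivation $\rho(\theta)$ on $S''$; one checks that $\rho(\theta)\in D(\A'')$ and that $\rho$ preserves the polynomial degree. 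A direct computation in coordinates identifies $\ker\rho=\alpha_H D(\A')$, yielding a left-exact sequence of graded $S$-modules
\[
0\longrightarrow \alpha_H D(\A')\longrightarrow D(\A)\stackrel{\rho}{\longrightarrow} D(\A'').
\]

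The entire statement then reduces to the surjectivity of $\rho$. Fix a homogeneous basis $\theta_1,\dots,\theta_\ell$ of $D(\A)$ with $\deg\theta_j=d_j$; the $\theta_j$ with $j\neq i$ already lie in $D(\A')$ and carry the desired degrees. Granting that $\rho$ is onto, I may arrange that $\{\rho(\theta_j)\}_{j\neq i}$ is the given basis of $D(\A'')$ and write $\rho(\theta_i)=\sum_{j\neq i}\bar c_j\,\rho(\theta_j)$ with $\bar c_j\in S''$; lifting the $c_j$ to $S$, the derivation $\theta_i-\sum_{j\neq i}c_j\theta_j$ lies in $\ker\rho=\alpha_H D(\A')$, hence equals $\alpha_H\eta$ for a unique $\eta\in D(\A')$ of degree $d_i-1$. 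Multiplying a hypothetical relation among $\theta_1,\dots,\widehat{\theta_i},\dots,\theta_\ell,\eta$ by $\alpha_H$ and invoking the independence of the $\theta_k$ shows these $\ell$ derivations are independent over $S$; since their degrees sum to $\sum_{j\neq i}d_j+(d_i-1)=|\A|-1=|\A'|$, Saito's criterion (Theorem~\ref{saito}) upgrades independence to a basis, so $\A'$ is free with $\exp(\A')=(d_1,\dots,d_i-1,\dots,d_\ell)$. Equivalently, surjectivity makes the displayed sequence short exact, and since $D(\A'')$ has projective dimension one over $S$ (being free over $S/\alpha_H S$) while $D(\A)$ is free, the kernel $\alpha_H D(\A')$ is a first syzygy of a $\pd$-one module and hence free; as $\alpha_H D(\A')$ is a degree shift of $D(\A')$, the latter is free, and a Hilbert-series count recovers the exponents.

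The main obstacle is exactly this surjectivity of $\rho$, and it is where both freeness hypotheses are genuinely consumed. By graded Nakayama, $\rho$ is onto iff the $\ell-1$ elements $\{\rho(\theta_j)\}_{j\neq i}$ generate $D(\A'')$; as their degrees $d_1,\dots,\widehat{d_i},\dots,d_\ell$ match $\exp(\A'')$ and sum to $|\A''|$, Saito's criterion applied to $\A''$ reduces this to their independence over $S''$, equivalently to the vanishing of the torsion cokernel $C:=D(\A'')/\rho(D(\A))$, which is supported on the flats of $\A''$ since $\rho$ is an isomorphism away from $H$. I would kill $C$ by the Euler-multiplicity/local-freeness method: Proposition~\ref{local} guarantees that every localization $\A_X$ is free, and running the rank-two base case of addition--deletion inside each codimension-one localization forces $\rho$ to be surjective there, after which reflexivity of the free module $D(\A'')$ propagates surjectivity everywhere. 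The numerical input making the codimension-one Euler multiplicities come out correctly is the deletion--restriction recursion $\chi(\A)=\chi(\A')-\chi(\A'')$ together with the factorization of $\chi$ for free arrangements, which pins the candidate exponents. Turning this codimension-one vanishing into a rigorous argument, rather than the surrounding homological formalism, is the crux of the proof.
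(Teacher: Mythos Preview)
The paper does not prove Theorem~\ref{Teraodeletion} at all: it is stated as a classical result of Terao, and the reader is referred to \cite{T1} and to Theorem~4.49 of \cite{OT} for the proof. There is therefore no ``paper's own proof'' to compare your attempt against.

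That said, your outline is essentially the standard one found in Orlik--Terao: set up the restriction sequence
\[
0 \longrightarrow \alpha_H D(\A') \longrightarrow D(\A) \stackrel{\rho}{\longrightarrow} D(\A''),
\]
reduce everything to surjectivity of $\rho$, and then extract a basis of $D(\A')$ via Saito's criterion. The identification $\ker\rho=\alpha_H D(\A')$ and the construction of $\eta$ are correct, and the degree count $\sum_{j\neq i} d_j + (d_i-1) = |\A'|$ is exactly what makes Saito's criterion apply.

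Where your write-up is genuinely incomplete is the surjectivity step, and you are candid about this. The appeal to ``running the rank-two base case of addition--deletion inside each codimension-one localization'' together with the $\chi$-recursion is not a proof as written; in particular, the deletion--restriction formula for $\chi$ does not by itself force the local exponents to line up. The standard argument is more direct: the image $\rho(D(\A))$ is a torsion-free, hence free, $\overline{S}$-submodule of $D(\A'')$ generated by $\rho(\theta_1),\dots,\rho(\theta_\ell)$; comparing degrees against the hypothesis $\exp(\A'')=(d_1,\dots,\widehat{d_i},\dots,d_\ell)$ and using Saito's criterion on $H$ shows that a suitable subset of $\ell-1$ of the $\rho(\theta_j)$ already forms a basis of $D(\A'')$, which is exactly surjectivity. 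Filling in that step would make your argument complete.
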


\begin{theorem}[Multiple deletion theorem, Theorem 1.3,  \cite{AT2}]
Let $\A$ be free with $\exp(\A)=(1,d_2,\ldots,d_\ell)$ with $1<d_2 
\le d_3 \le \cdots \le d_\ell$. 
If $H \in \A$ satisfies $|\A|-|\A^H|=d_2$, then $\A':=\A 
\setminus \{H\}$ is free with $\exp(\A')=
(1,d_2-1,d_3,\ldots,d_\ell)$.

\label{MDT}
\end{theorem}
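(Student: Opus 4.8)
The plan is to apply Saito's criterion (Theorem \ref{saito}) by producing an explicit homogeneous basis of $D(\A')$ of the predicted degrees $(1,d_2-1,d_3,\dots,d_\ell)$. First I would record the two standard inclusions $\alpha_H D(\A')\subseteq D(\A)\subseteq D(\A')$: the right one because deleting $H$ only relaxes tangency conditions, the left one because for $\theta\in D(\A')$ and any $L\neq H$ one has $(\alpha_H\theta)(\alpha_L)=\alpha_H\,\theta(\alpha_L)\in S\alpha_L$, while $(\alpha_H\theta)(\alpha_H)\in S\alpha_H$ is automatic. Fix a homogeneous basis $\theta_E=\theta_1,\theta_2,\dots,\theta_\ell$ of $D(\A)$ with $\deg\theta_i=d_i$, guaranteed by freeness of $\A$. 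Since $D(\A)\subseteq D(\A')$, the derivations $\theta_E,\theta_3,\dots,\theta_\ell$ already lie in $D(\A')$ with degrees $1,d_3,\dots,d_\ell$. As $1+(d_2-1)+d_3+\cdots+d_\ell=(\sum_i d_i)-1=|\A|-1=|\A'|$, Saito's criterion reduces the entire statement to exhibiting one new derivation $\eta\in D(\A')$ of degree exactly $d_2-1$ for which $\theta_E,\eta,\theta_3,\dots,\theta_\ell$ are $S$-independent.

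The heart is the existence of $\eta$. Since $d_2$ is the smallest nonunit exponent, in any degree $e<d_2$ the module $D(\A)$ consists only of multiples of $\theta_E$; a short argument then shows $D(\A')=D(\A)$ in all degrees $\le d_2-2$ (if $\eta'\in D(\A')_e$ with $e\le d_2-2$, then $\alpha_H\eta'\in D(\A)_{e+1}=S_{e}\theta_E$ forces $\eta'=(g/\alpha_H)\theta_E$, whence $\alpha_H\mid g$ and $\eta'\in S\theta_E\subseteq D(\A)$). Thus the task is purely an existence-at-the-critical-degree statement: show $\dim_\K D(\A')_{d_2-1}>\dim_\K D(\A)_{d_2-1}=\dim_\K S_{d_2-2}$. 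I would extract this from the reduction-mod-$\alpha_H$ map $r\colon D(\A)\to\Der S_H$, $S_H:=S/S\alpha_H$, whose kernel is exactly $\alpha_H D(\A')\cong D(\A')[-1]$; counting dimensions in degree $d_2$ gives $\dim D(\A')_{d_2-1}=\dim D(\A)_{d_2}-\dim(\operatorname{im}r)_{d_2}$, so existence amounts to the bound $\dim(\operatorname{im}r)_{d_2}<\dim D(\A)_{d_2}-\dim S_{d_2-2}$. This is precisely where the hypothesis $|\A|-|\A^H|=d_2$ enters: through deletion--restriction and the Ziegler multirestriction $(\A^H,m^H)$ of the free arrangement $\A$, the number $|\A|-|\A^H|$ controls $\operatorname{im}r$ and forces the first new generator of $D(\A')$ relative to $D(\A)$ to appear in degree $d_2-1$. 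Equivalently, one proves the cleaner module statement $D(\A')/D(\A)\cong (S/S\alpha_H)[-(d_2-1)]$: the quotient is annihilated by $\alpha_H$, hence an $S_H$-module, and the numerical condition pins its Hilbert series to that of a rank-one free $S_H$-module generated in degree $d_2-1$. Note that Theorem \ref{Teraodeletion} cannot be invoked directly here, since $\A^H$ as a simple arrangement need not be free; this is exactly what makes the statement nontrivial.

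Granting $\eta$, independence is a short degree argument. Writing $\alpha_H\eta\in D(\A)_{d_2}$ in the basis, $\alpha_H\eta=a_1\theta_E+\sum_{j\,:\,d_j=d_2}c_j\theta_j$ with $a_1\in S_{d_2-1}$ and $c_j\in\K$. If all $c_j$ vanished we would get $\eta=(a_1/\alpha_H)\theta_E$, forcing $\alpha_H\mid a_1$ and hence $\eta\in S\theta_E\subseteq D(\A)$, contradicting $\eta\notin D(\A)$; so some $c_j\neq0$, and after reindexing the equal-degree basis vectors we may assume $c_2\neq0$. Solving for $\theta_2$ shows that, over the fraction field $\operatorname{Frac}(S)$, the set $\{\theta_E,\eta,\theta_3,\dots,\theta_\ell\}$ spans the same space as the basis $\{\theta_E,\theta_2,\theta_3,\dots,\theta_\ell\}$, hence the former are $S$-independent. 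Saito's criterion (Theorem \ref{saito}) then yields that $\A'$ is free with $\exp(\A')=(1,d_2-1,d_3,\dots,d_\ell)$.

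I expect the main obstacle to be the existence step: controlling $\operatorname{im}r$ in degree $d_2$ from the single numerical datum $|\A|-|\A^H|=d_2$. The characteristic-polynomial bookkeeping $\chi(\A';t)=\chi(\A;t)+\chi(\A^H;t)$, which is compatible with the desired factorization $(t-1)(t-d_2+1)\prod_{i\ge3}(t-d_i)$, is a useful consistency check but, not implying freeness on its own, cannot replace the module-level computation. The real work is to show the new generator lands in degree $d_2-1$ and no higher, which is exactly what the sharp equality $|\A|-|\A^H|=d_2$ (as opposed to a mere inequality) provides.
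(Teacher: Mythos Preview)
The paper does not prove Theorem~\ref{MDT}; it is quoted from \cite{AT2} without argument, so there is no in-paper proof to compare against. The paper does, however, prove the closely analogous Theorem~\ref{MDT2} (same statement with $d_3$ in place of $d_2$), and that proof takes a route different from yours: it uses Ziegler's restriction (Theorem~\ref{Z}) to produce the element $\varphi=\frac{Q(\A^H,m^H)}{Q(\A^H)}\,\overline{\theta_E}\in D(\A^H,m^H)$ of the correct degree, shows $\varphi$ can replace one of the $\overline{\theta_i}$ in the Ziegler basis, deduces that the \emph{simple} restriction $\A^H$ is free, and then applies Terao's deletion theorem~\ref{Teraodeletion}. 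So your remark that Theorem~\ref{Teraodeletion} ``cannot be invoked directly here, since $\A^H$ \ldots\ need not be free'' is misleading: the heart of the MDT-style argument is precisely that the numerical hypothesis forces $\A^H$ to be free.

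As for correctness: your skeleton (find $\eta\in D(\A')_{d_2-1}\setminus D(\A)$, verify $S$-independence, apply Saito's criterion) is sound, and the independence step is fine. The gap is exactly where you say it is --- the existence of $\eta$. You assert $D(\A')/D(\A)\cong\overline{S}[-(d_2-1)]$ but do not prove it; the dimension count via $\operatorname{im}r$ is left unfinished. Within the paper's toolkit there are two quick ways to close it. First, the $B$-sequence (Proposition~\ref{B}) plus Theorem~\ref{FST}: since $\A$ is free, every localization $\A_X$ is free (Proposition~\ref{local}), so $\A$ is locally free along $H$ in codimension three, and $\pd_S D(\A)=0\le 1$; hence $\partial$ is surjective and $D(\A')/D(\A)\cong\overline{S}B\cong\overline{S}[-(d_2-1)]$, because $\deg B=|\A|-1-|\A^H|=d_2-1$. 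Second, transplant the proof of Theorem~\ref{MDT2}: the element $\varphi$ above has degree $1+(|\A|-1-|\A^H|)=d_2$, and expanding it in the Ziegler basis $\overline{\theta_2},\dots,\overline{\theta_\ell}$ yields exactly the nontrivial relation in $(\operatorname{im}r)_{d_2}$ your dimension count needs. Either route completes the argument; as written, the proposal is an outline rather than a proof.
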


Let us recall a fundamental polynomial to analyze the addition and deletion of arrangements.

\begin{define}[\cite{T1}]
For $H \in \A$, \textbf{Terao's polynomial $B$}, $B=B(\A,H)$, is defined as follows. 
Let $\nu:\A^H \rightarrow \A':=\A \setminus \{H\}$ be a section of the restriction $\A' \ni L \mapsto H \cap L \in \A^H$. Then $\B$ is defined by 
$$
B=\displaystyle \frac{Q(\A')}{\prod_{X \in \A^H} \alpha_{\nu(X)}}.
$$
\label{B}
\end{define}

The definition of $B$ depends on the choice of sections, however, it is easy to show that $B$ is unique up to non-zero scalar after taking modulo $\alpha_H$. 

\begin{prop}[$B$-sequence, Theorem 1.6, \cite{A12}]
Let $H \in \A$ and $\A':=\A \setminus \{H\}$ and 
$B:=B(\A,H)$. 
 Then for a map $\partial:D(\A') \rightarrow \overline{SB}$ defined by 
 $$
 \partial(\theta):=\overline{\theta(\alpha_H)},
 $$
 there is an exact sequence called a \textbf{$\B$-sequence}:
 $$
 0 \rightarrow D(\A) \rightarrow D(\A') \stackrel{\partial}{\rightarrow} \overline{SB}.
 $$
 Here $D(\A) \rightarrow D(\A') $ is the natural inclusion. In addition, we have the following exact sequence:
 $$
 0 \rightarrow D_0(\A) \stackrel{j}{\rightarrow} D_0(\A') \stackrel{\partial}{\rightarrow} \overline{SB}
 $$
 up to isomorphisms.
 \end{prop}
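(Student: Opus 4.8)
The plan is to establish the three substantive claims contained in the statement: that $\partial$ is well defined, i.e. $\overline{\theta(\alpha_H)}\in\overline{SB}$ for every $\theta\in D(\A')$; that $\ker\partial=D(\A)$ under the inclusion; and that the $D_0$-sequence is obtained from the $D$-sequence by factoring out the common summand $S\theta_E$. The last two points are essentially formal, and the heart of the matter is the well-definedness of $\partial$, which I would reduce to a divisibility statement proved by localizing at each $X\in\A^H$.

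First I would unwind $B$ modulo $\alpha_H$. For $X\in\A^H$ put $m_X:=|\{L\in\A'\mid L\cap H=X\}|$; since $X$ has codimension two, these are exactly the hyperplanes of $\A'$ containing $X$, i.e. the members of the localization $\A'_X$. Grouping the factors of $Q(\A')$ by their restriction to $H$ and using that all $L\in\A'$ with $L\cap H=X$ give forms $\overline{\alpha_L}$ proportional to $\overline{\alpha_{\nu(X)}}$, one finds $\overline{Q(\A')}=c\prod_{X\in\A^H}\overline{\alpha_{\nu(X)}}^{\,m_X}$ and hence $\overline{B}=c\prod_{X\in\A^H}\overline{\alpha_{\nu(X)}}^{\,m_X-1}$ for a nonzero scalar $c$; in particular $\overline B\neq0$, as $\alpha_H\nmid B$. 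Because $\overline S$ is a polynomial ring (a UFD) and the $\overline{\alpha_{\nu(X)}}$ are pairwise non-proportional linear forms, it then suffices to prove, for each fixed $X$, that $\overline{\alpha_{\nu(X)}}^{\,m_X-1}$ divides $\overline{\theta(\alpha_H)}$: coprimality then forces divisibility by the whole product, i.e. membership in $\overline{SB}$.

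For the local divisibility I would choose coordinates with $\alpha_H=y_1$ and $\alpha_{\nu(X)}=y_2$, so that the hyperplanes $L_1=\nu(X),\dots,L_{m_X}$ of $\A'_X$ have defining forms $\alpha_{L_i}=a_iy_1+y_2$ with distinct $a_i$ and $a_1=0$. Writing $\theta=\sum_j f_j\partial_{y_j}$, only the relations $\theta(\alpha_{L_i})=a_if_1+f_2\in S\alpha_{L_i}$ are relevant (the remaining hyperplanes of $\A'$ impose only extra constraints), and they say precisely that $(f_1,f_2)$ lies in the solution module of the essential arrangement $\A'_X$ in the $(y_1,y_2)$-plane. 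That arrangement is free with $\exp=(1,m_X-1)$, and I would exhibit the explicit basis $\{\theta_E,\eta\}$ with $\theta_E=y_1\partial_{y_1}+y_2\partial_{y_2}$ and $\eta=\bigl(\prod_{i=2}^{m_X}\alpha_{L_i}\bigr)\partial_{y_1}$, verifying independence and the degree count $1+(m_X-1)=m_X$ so that Theorem \ref{saito} applies. Since $S$ is free over $\K[y_1,y_2]$, the solutions over $S$ are spanned by the same two generators, so $(f_1,f_2)=A(y_1,y_2)+g\bigl(\prod_{i\ge2}\alpha_{L_i},0\bigr)$ for some $A,g\in S$; reducing $f_1$ modulo $y_1$ and using $\overline{\alpha_{L_i}}=\overline{y_2}$ gives $\overline{\theta(\alpha_H)}=\overline{f_1}=\overline g\,y_2^{\,m_X-1}$, the required divisibility. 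The main obstacle is exactly here: naively reducing the defining relations modulo $\alpha_H$ yields divisibility only to first order, whereas capturing the full multiplicity $m_X-1$ genuinely requires the free basis of the localized two-dimensional arrangement.

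Finally, for exactness at $D(\A')$, a derivation $\theta\in D(\A')$ lies in $\ker\partial$ iff $\theta(\alpha_H)\in S\alpha_H$, which combined with the relations already satisfied for $L\neq H$ is exactly the condition $\theta\in D(\A)$; the natural map $D(\A)\to D(\A')$ is an inclusion, hence injective, so the first sequence is exact. For the $D_0$-version I would observe that $\partial(\theta_E)=\overline{\alpha_H}=0$, so $S\theta_E\subseteq\ker\partial$ is a common direct summand of $D(\A)\subseteq D(\A')$; quotienting the first exact sequence by $S\theta_E$ and invoking the decomposition $D=S\theta_E\oplus D_0$ to identify $D(\A)/S\theta_E\cong D_0(\A)$ and $D(\A')/S\theta_E\cong D_0(\A')$ produces the second exact sequence, which is precisely the assertion made ``up to isomorphisms''.
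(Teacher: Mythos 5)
Your proposal is correct, but it is organized quite differently from the paper's proof, which is much shorter because it splits the work asymmetrically. For the first exact sequence (in particular the well-definedness of $\partial$, i.e.\ $\overline{\theta(\alpha_H)}\in\overline{SB}$, which is the real content), the paper simply cites Theorem 1.6 of \cite{A12} and proves nothing; you instead reprove it from scratch by the Ziegler-style argument: identify $\overline{B}$ with $\prod_{X\in\A^H}\overline{\alpha_{\nu(X)}}^{\,m_X-1}$ up to scalar, localize at each codimension-two flat $X$, exhibit the Saito basis $\{\theta_E,\eta\}$ of the rank-two local arrangement via Theorem \ref{saito}, and extend scalars by flatness of $S$ over $\K[y_1,y_2]$ to extract the multiplicity $m_X-1$ -- all of which is sound (and your remark that a naive first-order reduction mod $\alpha_H$ is insufficient is exactly the right point). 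Conversely, for the $D_0$-sequence, where the paper does give a proof, your route is genuinely different: the paper constructs the comparison map explicitly, $j(\theta)=\theta+\frac{\theta(\alpha_H)}{|\A'|\alpha_H}\theta_E$, together with a section $s$, and verifies $\ker\partial=\im(j)$ by direct computation, whereas you observe $\partial(\theta_E)=\overline{\alpha_H}=0$ and quotient the first sequence by the common direct summand $S\theta_E$, then identify $D(\A)/S\theta_E\simeq D_0(\A)$ and $D(\A')/S\theta_E\simeq D_0(\A')$. The two agree: since $\theta(Q)=0$ gives $\theta(Q')=-Q'\theta(\alpha_H)/\alpha_H$, the $D_0(\A')$-component of $\theta\in D_0(\A)$ is precisely the paper's $j(\theta)$, so your abstract map is the paper's map in disguise. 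Your quotient argument is cleaner and makes the ``up to isomorphisms'' clause transparent, at the cost of not producing the explicit formula for $j$ (which the paper's presentation hands you, and which makes visible the subtlety it emphasizes, namely that there is no literal inclusion $D_0(\A)\subset D_0(\A')$); both treatments implicitly need $\A'\neq\emptyset$ and characteristic zero, yours for the direct sum decompositions and the paper's to divide by $|\A'|$.
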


 \begin{proof}
 For the first sequence , we refer to \cite[Theorem 1.6]{A12}. For the second one, we need to define a new map, which we denote by $j$, as in fact there is no natural inclusion 
 $D_0(\A) \subset D_0(\A')$. 
Let $Q(\A)=Q, Q(\A')=Q'$, so $\alpha_H Q'=Q$. Let $\theta \in D_0(\A)$. Then 
$$
\theta(Q)=\alpha_H\theta(Q')+Q'\theta(\alpha_H)=0.
$$
Since $\theta_E(Q')=|\A'|Q'$, we have
$$
j(\theta):=\theta+\frac{\theta(\alpha_H)}{|\A'|\alpha_H}\theta_E \in 
D_0(\A').
$$
Now 
$$
\partial \circ j(\theta)=
\overline{\theta(\alpha_H)+\frac{\theta(\alpha_H)}{|\A'|\alpha_H}\alpha_H}=0
$$
since $\theta(\alpha_H) \in S \alpha_H$. Conversely, 
assume that $\partial(\varphi)=0$ for $\varphi \in D_0(\A')$, 
i.e., $\varphi(\alpha_H)$ is divisible by $\alpha_H$. Then $\varphi \in D(\A)$. Thus 
$$
\varphi(Q)=Q' \varphi(\alpha_H).
$$
Hence it is easy to show that 
$$
s(\varphi):=\varphi-\frac{\varphi(\alpha_H)}{|\A|\alpha_H} \theta_E \in D_0(\A),
$$
and we can check that $j\circ s (\varphi)=\varphi$, which shows that 
$\mbox{ker} (\partial)=\mbox{Im}(j)$.
\end{proof}

It is well-known that $D(\A)$ and $D_0(\A)$ are both reflexive. So \cite[Corollary 1.13]{Eis} shows the following.

\begin{prop} We have a natural isomorphism
$$
H^0_*(\widetilde{D(\A))}:=\bigoplus_{d \in \Z} H^0(\widetilde{D(\A)} (d))
 \simeq D(\A).
$$    
This restricts to a natural isomorphism
$$
H^0_*(\widetilde{D_0(\A))}
\simeq D_0(\A).
$$    
\label{Eis}
\end{prop}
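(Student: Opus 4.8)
The plan is to deduce both isomorphisms from a single depth criterion, fed with the reflexivity of $D(\A)$ and $D_0(\A)$ recalled just above. Recall that for a finitely generated graded $S$-module $M$, writing $\mathfrak{m}=(x_1,\dots,x_\ell)$ for the irrelevant ideal and $\widetilde{M}$ for the coherent sheaf on $\Proj S$ attached to $M$, the natural comparison map
$$
M \longrightarrow H^0_*(\widetilde{M})=\bigoplus_{d \in \Z} H^0(\widetilde{M}(d))
$$
is an isomorphism exactly when $\depth_{\mathfrak{m}} M \ge 2$, equivalently when $H^0_{\mathfrak{m}}(M)=H^1_{\mathfrak{m}}(M)=0$; this is the content of \cite[Corollary 1.13]{Eis}. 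Everything then reduces to checking this depth bound for $M=D(\A)$ and for $M=D_0(\A)$.

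First I would feed in reflexivity. Over the polynomial ring $S$, which is regular and in particular a normal Cohen--Macaulay domain, any finitely generated reflexive module satisfies Serre's condition $(S_2)$, hence has $\depth_{\mathfrak{m}} M \ge \min(2,\dim M)$. In our setting $\A$ is a cone, so $\ell \ge 2$, and both $D(\A)$ and $D_0(\A)$ are torsion-free of positive rank over the domain $S$, so they have dimension $\ell \ge 2$; thus $(S_2)$ forces $\depth_{\mathfrak{m}} \ge 2$ in both cases. Applying the criterion to $M=D(\A)$ and then to $M=D_0(\A)$ yields the two stated isomorphisms, and both are natural because the comparison map is.

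For the assertion that the first isomorphism restricts to the second, I would use the decomposition $D(\A)=S\theta_E \oplus D_0(\A)$. Sheafification is exact and commutes with finite direct sums, so $\widetilde{D(\A)}=\widetilde{S\theta_E}\oplus\widetilde{D_0(\A)}$, and likewise after applying $H^0_*$; since $S\theta_E\cong S[-1]$ is free and $\ell\ge 2$ forces $H^0_*(\widetilde{S[-1]})=S[-1]$, the comparison map for $D(\A)$ is the direct sum of those for $S\theta_E$ and for $D_0(\A)$, so it carries the second summand isomorphically onto $H^0_*(\widetilde{D_0(\A)})$.

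The only genuine content is the reflexivity input, which is the step I would expect to be the crux were it not already available. Proving it from scratch would mean verifying $(S_2)$ for $D(\A)$ directly, i.e. realizing $D(\A)$ as the intersection, inside $\Der S$, of the local conditions at the height-one primes $(\alpha_H)$; this is standard for logarithmic derivation modules, since the defining conditions $\theta(\alpha_H)\in S\alpha_H$ are imposed one codimension-one component at a time, and we take it as known. Granting it, the argument above is immediate.
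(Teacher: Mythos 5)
Your proof is correct and takes essentially the same route as the paper, whose entire argument is the observation that $D(\A)$ and $D_0(\A)$ are reflexive together with the citation of \cite[Corollary 1.13]{Eis}. Your additional details --- deducing $\depth_{\mathfrak{m}} \ge 2$ from reflexivity via $(S_2)$, and using the decomposition $D(\A)=S\theta_E \oplus D_0(\A)$ to see that the isomorphism restricts to $D_0(\A)$ --- merely make explicit what the paper delegates to that citation.
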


\subsection{Arrangements of projective dimension zero or one}

Let us collect here some results about arrangements whose module of
logarithmic derivations has small projective resolutions, mainly
projective dimension zero (free arragements) or projective dimensione one. For that let us introduce some definitions.

\begin{define}
We say that $\A$ is \textbf{locally free} if $\A_X$ is free for all 
$X \in L(\A)$ with $X \neq \cap_{H \in \A}H$. Also, we say that $\A$ is \textbf{locally free along $H$} if $\A_X$ is free for all 
$X \in L(\A^H)$ with $X \neq \cap_{H \in \A}H$. Moreover, we say that $\A$ is \textbf{locally free along $H$ in codimension $k$} if 
$\A_X$ is free for all 
$X \in L(\A^H)$ with $X \neq \cap_{H \in \A}H$ and $\codim_V X=k$.
\label{locallyfree}
\end{define}

\begin{theorem}[Theorem 3.2, \cite{A12}]
Let $H \in \A$ and $\A':=\A \setminus \{H\}$. 
Assume that $\A$ is locally free along $H$ in codimension three. Then 
$\partial$ in Proposition \ref{B} is surjective if $\pd_S(D(\A)) \le 1$.
\label{FST}
\end{theorem}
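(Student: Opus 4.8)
The plan is to prove surjectivity of $\partial$ by showing that its cokernel $C:=\coker(\partial)$ vanishes. Since $\partial$ lands in $\overline{SB}$, which is annihilated by $\alpha_H$, the module $C$ is supported on $H$, and it suffices to prove $C_{\pp}=0$ for every minimal prime $\pp$ of $\supp C$. Each such $\pp$ contains $\alpha_H$, so writing $X=V(\pp)$ we have $X\subseteq H$, i.e. $X\in L(\A^H)$, and localizing the $B$-sequence produces the exact complex
$$
0\to D(\A)_\pp\to D(\A')_\pp\xrightarrow{\ \partial_\pp\ }(\overline{SB})_\pp\to C_\pp\to 0 .
$$
I would first record the inputs that hold uniformly. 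We may assume $\overline{SB}\neq 0$ (otherwise $\partial$ is trivially onto), so $\overline{SB}\cong\bar S[-\deg B]$ is a rank-one free $\bar S$-module, hence Cohen--Macaulay; thus $(\overline{SB})_\pp$ is Cohen--Macaulay of dimension $n-1$, where $n=\codim\pp$. Next, $D(\A')$ is reflexive, hence satisfies Serre's condition $(S_2)$, so $\depth D(\A')_\pp\ge\min(2,n)$. Finally $\pd_{S_\pp}D(\A)_\pp\le\pd_S D(\A)\le 1$, and the localization isomorphism $D(\A)_\pp\cong D(\A_X)_\pp$ upgrades this to $\pd_{S_\pp}D(\A)_\pp=0$ (i.e. $D(\A)_\pp$ free) whenever $\A_X$ is free.

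Then I would run a case analysis on $n=\codim\pp$ at a minimal prime of $\supp C$, where $C_\pp$ has finite length, so $\depth C_\pp=0$. For $n=1$ a direct computation at the generic point of $H$ (where the localized arrangement is a single hyperplane) gives $\partial_\pp$ onto, and for $n=2$ the rank-two localization is free, so Terao's addition--deletion (Theorems~\ref{saito} and \ref{Teraodeletion}) forces surjectivity; in both cases $\pp\notin\supp C$. The core is a homological estimate for $n\ge 3$. Splitting the complex as $0\to K_\pp\to(\overline{SB})_\pp\to C_\pp\to 0$ and $0\to D(\A)_\pp\to D(\A')_\pp\to K_\pp\to 0$ with $K_\pp=\im\partial_\pp$, the depth lemma gives $\depth K_\pp=\depth C_\pp+1=1$, whence $\pd_{S_\pp}K_\pp=n-1$ by Auslander--Buchsbaum. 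The second sequence, on the other hand, yields $\pd_{S_\pp}K_\pp\le\max\bigl(\pd_{S_\pp}D(\A')_\pp,\ \pd_{S_\pp}D(\A)_\pp+1\bigr)$. Reflexivity bounds the first term by $n-2$ and $\pd_{S_\pp}D(\A)_\pp+1\le 2$; for $n\ge 4$ this gives $\pd_{S_\pp}K_\pp\le\max(n-2,2)=n-2<n-1$, a contradiction, so no minimal prime of codimension $\ge 4$ occurs.

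This leaves $n=3$, which I expect to be the main obstacle and the precise place where the codimension-three hypothesis is indispensable. There the bound above only yields $\pd_{S_\pp}K_\pp\le\max(1,2)=2=n-1$, with no contradiction, as long as one merely knows $\pd_{S_\pp}D(\A)_\pp\le 1$. The missing input is exactly local freeness along $H$ in codimension three: it promotes $\pd_{S_\pp}D(\A)_\pp\le 1$ to $D(\A)_\pp$ \emph{free}, so that $\pd_{S_\pp}D(\A)_\pp+1=1$; combined with $\pd_{S_\pp}D(\A')_\pp\le 1$ from reflexivity, this gives $\pd_{S_\pp}K_\pp\le 1<2$, the desired contradiction. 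Assembling the cases shows $\supp C=\emptyset$, hence $C=0$ and $\partial$ is surjective. The only delicate points are the standard facts invoked—reflexive $\Rightarrow(S_2)$, the localization isomorphism $D(\A)_\pp\cong D(\A_X)_\pp$, and the behavior of depth and projective dimension along the two short exact sequences—none of which should cause trouble.
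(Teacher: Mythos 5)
The paper itself contains no proof of this statement---it is quoted from \cite{A12} (Theorem 3.2)---so there is no internal argument to compare against; your proposal has to stand on its own, and its skeleton is indeed the standard one for results of this kind. Your uniform inputs are correct: $\overline{SB}\cong\overline{S}[-\deg B]$ is Cohen--Macaulay because no factor of $B$ is proportional to $\alpha_H$, reflexivity gives $(S_2)$ for $D(\A')$, and localization of logarithmic derivations behaves as you say. The depth lemma does pin $\depth K_\pp=1$, so $\pd_{S_\pp}K_\pp=n-1$ by Auslander--Buchsbaum, and the case $n\ge 4$ is correctly killed by $\max(n-2,2)=n-2<n-1$. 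For $n\le 2$ your appeal to ``Terao addition--deletion'' is loose (Theorem \ref{Teraodeletion} is a freeness statement, not a surjectivity statement), but the underlying fact is true: rank-$\le 2$ localizations of both $\A$ and $\A'$ are free, and right exactness of the localized $B$-sequence follows from a Hilbert series count, since $\deg B_X=|\A'_X|-1$ matches the Hilbert series of $D(\A'_X)/D(\A_X)$. One technical repair: a minimal prime $\pp$ of $\supp C$ is not of the form $I(X)$ for a flat $X$; you must pass to $X(\pp):=\bigcap\{K\in\A \mid \alpha_K\in\pp\}$, which lies in $L(\A^H)$ because $\alpha_H\in\pp$, use $D(\A)_\pp\cong D(\A_{X(\pp)})_\pp$, and note that for $\Ht\pp=3$ either $\codim_V X(\pp)\le 2$ (freeness is then automatic) or $\pp=I(X(\pp))$ with $X(\pp)$ a codimension-three flat of $L(\A^H)$, which is the only place the hypothesis enters.

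The substantive gap is in your $n=3$ step. The paper's Definition \ref{locallyfree} explicitly excludes the center $X=\cap_{K\in\A}K$, so when the center has codimension three (any essential rank-three arrangement, or an essentially rank-three one) your promotion of $\pd_{S_\pp}D(\A)_\pp\le 1$ to freeness has no support at $\pp=I(\mathrm{center})$---and the statement, read with that definition, actually fails there. Take $\A=\{x,y,z,x+y+z\}$ in $\K^3$ with $H=\{x+y+z=0\}$: then $\pd_S D(\A)=1$, the codimension-three local freeness condition is vacuous (the only codimension-three flat of $L(\A^H)$ is the center, which is excluded), $B=1$ so $\overline{SB}=\overline{S}$, yet the image of $\partial$ is the ideal $(\overline{x},\overline{y})\subsetneq\overline{S}$, so $\coker(\partial)\cong\K\neq 0$. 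Your proof is therefore correct exactly under the reading in which the codimension-three freeness hypothesis also covers the center, equivalently whenever the center has codimension at least four---which holds in every application the paper makes (there $\ell+1\ge 4$ and the arrangements are essential), and is consistent with the fact that the paper's own Lemmas \ref{lemma1} and \ref{lemma2} for line arrangements in $\P^2$ must add an $H^1$-vanishing hypothesis to obtain right exactness, precisely because a finite-length cokernel concentrated at the center can survive in rank three. You should make this case distinction explicit rather than let the $n=3$ analysis absorb it silently.
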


Now let us recall Yoshinaga's celebrated criterion for freeness. 
For $H \in \A$ let $\A^H:=\{H \cap L \mid L \in \A \setminus \{H\}\}$ be an arrangement in $\K^{\ell-1} \simeq H$, and 
let $m^H:\A^H \rightarrow \Z_{>0}$ be defined by 
$$
m^H(X):=|\{L \in \A \setminus \{H\} \mid L \cap H=X\}|.
$$
Then the pair $(\A^H,m^H)$ is called the \textbf{Ziegler restriction} of $\A$ onto $H$. We can define the logarithmic derivation module $D(\A^H,m^H)$ of $(\A^H,m^H)$ onto $H$ by 
$$
D(\A^H,m^H):=\{ \theta \in \Der \overline{S} \mid \theta(\alpha_X) \in \overline{S} \alpha_X^{m^H(X)}\ 
(\forall X \in \A^H)\}.
$$
Let us recall a fundamental result on the Ziegler restriction.

\begin{theorem}[\cite{Z}]
If $\A$ is free with $\exp(\A)=(1,d_2,\ldots,d_\ell)$, then 
$(\A^H,m^H)$ is free with exponents $(d_2,\ldots,d_\ell)$. Moreover, we can choose a basis $\theta_E,\theta_2,\ldots,\theta_\ell$ for $D(\A)$ such that $\overline{\theta_2},\ldots,\overline{\theta_\ell}$ form a basis for $D(\A^H,m^H)$, where $\overline{\theta_i}$ denotes $\theta_i$ modulo $\alpha_H$.
\label{Z}
\end{theorem}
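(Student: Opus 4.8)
The plan is to construct the claimed basis by hand and to certify it by Saito's criterion, used first for $\A$ and then in its multiarrangement form for $(\A^H,m^H)$. Normalize coordinates so that $\alpha_H=x_\ell$, write $\overline{(\ )}$ for reduction modulo $x_\ell$, and identify $\overline{S}=S/(x_\ell)=\K[x_1,\dots,x_{\ell-1}]$ with the coordinate ring of $H$; throughout $\doteq$ denotes equality up to a nonzero scalar of $\K$. Start from a homogeneous basis $\theta_E,\theta_2,\dots,\theta_\ell$ of $D(\A)$ with $\deg\theta_i=d_i$, and write $\theta_i=\sum_{j=1}^\ell f_{ij}\partial_{x_j}$. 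Since $H\in\A$, every $\theta\in D(\A)$ satisfies $\theta(x_\ell)\in Sx_\ell$, so $f_{i\ell}\in(x_\ell)$; writing $f_{i\ell}=x_\ell h_i$ with $h_i$ homogeneous of degree $d_i-1$ and replacing $\theta_i$ by $\theta_i-h_i\theta_E$ for $i\ge2$, I obtain a new homogeneous basis of $D(\A)$ (the change being upper triangular and fixing $\theta_E$) in which $\theta_2,\dots,\theta_\ell$ have no $\partial_{x_\ell}$-component. Each $\overline{\theta_i}=\sum_{j=1}^{\ell-1}\overline{f_{ij}}\,\partial_{x_j}$ is then a genuine element of $\Der\overline{S}$.

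Next I record the determinant of the coefficient matrix of $\overline{\theta_2},\dots,\overline{\theta_\ell}$. By Saito's criterion (Theorem \ref{saito}) the $\ell\times\ell$ coefficient matrix $M$ of $\theta_E,\theta_2,\dots,\theta_\ell$ satisfies $\det M\doteq Q(\A)=x_\ell\,Q(\A')$, where $\A'=\A\setminus\{H\}$. With the normalized basis the last column of $M$ has $x_\ell$ in the $\theta_E$-row and zeros elsewhere, so expanding along it gives $\det M\doteq x_\ell\det\tilde N$ with $\tilde N=(f_{ij})_{2\le i\le\ell,\,1\le j\le\ell-1}$; comparison yields $\det\tilde N\doteq Q(\A')$, and reduction modulo $x_\ell$ gives $\det N\doteq\overline{Q(\A')}=\prod_{L\in\A'}\overline{\alpha_L}\doteq\prod_{X\in\A^H}\alpha_X^{m^H(X)}$, where $N=(\overline{f_{ij}})$ is the coefficient matrix of $\overline{\theta_2},\dots,\overline{\theta_\ell}$ and the last equality holds because the $m^H(X)$ forms $\overline{\alpha_L}$ with $L\cap H=X$ are scalar multiples of $\alpha_X$. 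A degree check is consistent: $\sum_{i=2}^\ell d_i=|\A|-1=\sum_{X\in\A^H}m^H(X)$.

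The hard part is membership: $\overline{\theta_i}\in D(\A^H,m^H)$ for $i\ge2$, that is $\overline{\theta_i}(\alpha_X)\in\overline{S}\,\alpha_X^{m^H(X)}$ for every $X\in\A^H$. This cannot be read off from the relations $\theta_i(\alpha_L)\in S\alpha_L$ one hyperplane at a time: those only yield divisibility by $\alpha_X^1$, while the full exponent $m^H(X)$ encodes the joint contribution of all hyperplanes through the codimension-two flat $X$. I would argue locally. Fix $X$, set $m=m^H(X)$, and let $\pp$ be the prime of $X$ in $S$; the condition $\overline{\theta_i}(\alpha_X)\in(\alpha_X^{m})$ is imposed at the height-one prime $(\alpha_X)$ of $\overline{S}$ only, so it suffices to verify it after localizing. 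By Proposition \ref{local} the localization $\A_X=\{H,L_1,\dots,L_m\}$ is free, and the standard localization $D(\A)_{\pp}\cong D(\A_X)_{\pp}$ reduces everything to this rank-two configuration. In local coordinates $(u,v)=(\alpha_X,x_\ell)$ it is the pencil $v=0$, $u+a_sv=0$ ($1\le s\le m$, the $a_s$ distinct), for which $\theta_E$ and $\eta:=\prod_s(u+a_sv)\,\partial_u$ form a basis of $D(\A_X)$. Any member of $D(\A_X)_\pp$ without $\partial_v$-component is then an $S_\pp$-multiple of $\eta$ (the $\partial_v$-component of $\theta_E$ is $v\ne0$), and since $\theta_i$ has no $\partial_{x_\ell}$-component this forces $\overline{\theta_i}$ to be an $\overline{S}_{(\alpha_X)}$-multiple of $\overline{\eta}=u^{m}\partial_u$; hence $\overline{\theta_i}(\alpha_X)\in(\alpha_X^{m})$, which is membership.

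Finally I would invoke the multiarrangement form of Saito's criterion from \cite{Z}: derivations belonging to $D(\A^H,m^H)$ whose coefficient-matrix determinant is $\doteq\prod_{X}\alpha_X^{m^H(X)}$ necessarily form an $\overline{S}$-basis. By the second and third steps $\overline{\theta_2},\dots,\overline{\theta_\ell}$ is exactly such a system, so $(\A^H,m^H)$ is free with exponents equal to the degrees $(d_2,\dots,d_\ell)$. This simultaneously proves freeness of the Ziegler restriction and exhibits the basis $\theta_E,\theta_2,\dots,\theta_\ell$ of $D(\A)$ whose reductions $\overline{\theta_2},\dots,\overline{\theta_\ell}$ form a basis of $D(\A^H,m^H)$, which is the ``moreover'' assertion.
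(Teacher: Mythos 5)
Your proof is correct, but note that the paper contains no proof of Theorem \ref{Z} to compare against: it is quoted verbatim from \cite{Z}. What you have written is in substance Ziegler's original argument, reconstructed accurately: (i) normalize a homogeneous basis by subtracting $h_i\theta_E$ so that $\theta_2,\ldots,\theta_\ell$ annihilate $\alpha_H$; (ii) expand Saito's determinant (Theorem \ref{saito}) along the last column to get $\det N\doteq \overline{Q(\A')}\doteq\prod_{X}\alpha_X^{m^H(X)}$; (iii) prove membership $\overline{\theta_i}\in D(\A^H,m^H)$ by localizing at each codimension-two flat; (iv) conclude by the multiarrangement form of Saito's criterion, which is indeed proved in \cite{Z} independently of the restriction theorem, so there is no circularity in citing it. You correctly identify (iii) as the crux --- the naive hyperplane-by-hyperplane reduction only yields divisibility by $\alpha_X^1$ --- and your handling is sound: $D(\A)_\pp=D(\A_X)_\pp$ at the prime $\pp$ of $X$, the localized arrangement is the pencil $v=0$, $u+a_sv=0$ with $a_s$ distinct (distinct because the $L_s$ are distinct and none equals $H$), and vanishing of the $\partial_v$-component kills the $\theta_E$-coefficient, after which reduction modulo $v$ and the UFD property of $\overline{S}$ give $\overline{\theta_i}(\alpha_X)\in(\alpha_X^{m})$ globally. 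One sentence is literally false for $\ell\ge 3$: an element of $D(\A_X)_\pp$ without $\partial_v$-component is an $S_\pp$-multiple of $\eta$ only modulo the coordinate derivations $\partial_{x_j}$ in the directions along $X$; since those annihilate $\alpha_X=u$, the conclusion you actually use is unaffected, so this is a cosmetic imprecision, not a gap. The only other point worth making explicit is that $\det N\neq 0$ also guarantees each $\overline{\theta_i}\neq 0$, so the exponents are exactly $(d_2,\ldots,d_\ell)$.
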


Here $\overline{S}:=S/\alpha_H S$. It is known that all Ziegler restrictions of three dimensional arrangements are free, so the degrees of basis for it is called the \textbf{exponents} of $(\A^H,m^H)$, denoted by $\exp(\A^H,m^H)$. This gives the following criterion for the freeness of $\A$.

\begin{theorem}[Theorem 3.2, \cite{Y2}]
Let $\A$ be an arrangement in $\K^3$ and let $H \in \A$. Assume that 
$\exp(\A^H,m^H)=(d_1,d_2)$. Then $\A$ is free with exponents $(1,d_1,d_2)$ if and only if 
$\chi_0(\A;0)=d_1d_2$.
\label{Ycriterion}
\end{theorem}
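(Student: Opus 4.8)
My plan is to translate freeness into the splitting of an associated rank-two bundle on $\P^2$ and to detect the numerical hypothesis through its Chern classes. Write $\E:=\widetilde{D_0(\A)}$, the sheafification on $\P^2$ of $D_0(\A)$; since $D_0(\A)$ is reflexive of rank two, $\E$ is a vector bundle. Because $D(\A)=S\theta_E\oplus D_0(\A)$ with $S\theta_E\cong S[-1]$ free, $\A$ is free with $\exp(\A)=(1,d_1,d_2)$ if and only if $D_0(\A)\cong S[-d_1]\oplus S[-d_2]$, equivalently, by Proposition \ref{Eis}, if and only if $\E\cong\cO(-d_1)\oplus\cO(-d_2)$; and by Horrocks' splitting criterion on $\P^2$ the latter holds exactly when $H^1_*(\E)=0$. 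I would then invoke the standard dictionary between the characteristic polynomial and the Chern classes of the logarithmic bundle, $\chi_0(\A;t)=t^2+c_1(\E)\,t+c_2(\E)$, where $c_1(\E)=1-|\A|=-(d_1+d_2)$ since $d_1+d_2=|m^H|=|\A|-1$ is the total multiplicity of the Ziegler restriction, and $c_2(\E)=\chi_0(\A;0)$. Thus the hypothesis $\chi_0(\A;0)=d_1d_2$ reads precisely as $c_2(\E)=d_1d_2$.

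The crucial geometric input is that the Ziegler restriction computes the restriction of $\E$ to the line $\overline H:=\P(H)\cong\P^1$: one has a natural isomorphism $\E|_{\overline H}\cong\widetilde{D(\A^H,m^H)}$, the bundle form of Theorem \ref{Z}. Since every multiarrangement on a two-dimensional space is free, and $\exp(\A^H,m^H)=(d_1,d_2)$ by hypothesis, this gives
$$
\E|_{\overline H}\cong\cO_{\P^1}(-d_1)\oplus\cO_{\P^1}(-d_2).
$$
The direction ($\Rightarrow$) is then immediate: if $\A$ is free, $\E$ splits, Theorem \ref{Z} identifies the splitting type with the Ziegler exponents $(d_1,d_2)$, and the dictionary yields $c_2(\E)=d_1d_2$, i.e. $\chi_0(\A;0)=d_1d_2$.

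For the substantive direction ($\Leftarrow$) I assume $c_2(\E)=d_1d_2$ and, say, $d_1\le d_2$, and pass to $\E':=\E(d_2)$, which has $c_1(\E')=d_2-d_1\ge 0$, $c_2(\E')=c_2(\E)-d_1d_2=0$, and $\E'|_{\overline H}\cong\cO_{\P^1}(d_2-d_1)\oplus\cO_{\P^1}$. The goal is to split off a line-subbundle. Using Riemann--Roch together with the vanishing of $H^0(\E'(-1))$ forced by (semi)stability, and the identification of $\E'|_{\overline H}$, I would produce a subsheaf $\cO\hookrightarrow\E'$ with locally free quotient; the key point is that $c_2(\E')=0$ makes the zero scheme of the relevant section, a length-$c_2(\E')$ subscheme of codimension two, empty, so the section is nowhere vanishing. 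This yields
$$
0\longrightarrow\cO\stackrel{s}{\longrightarrow}\E'\longrightarrow\cO(d_2-d_1)\longrightarrow 0,
$$
which splits because $\Ext^1(\cO(d_2-d_1),\cO)=H^1(\P^2,\cO(d_1-d_2))=0$. Hence $\E\cong\cO(-d_1)\oplus\cO(-d_2)$, so $D_0(\A)$ is free and $\A$ is free with $\exp(\A)=(1,d_1,d_2)$.

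The main obstacle is exactly the production of this nowhere-vanishing section: one must convert the purely numerical equality $c_2(\E)=d_1d_2$ into the existence of a global section of $\E(d_2)$ extending the distinguished section on $\overline H$, which amounts to controlling the obstruction in $H^1(\E(d_2-1))$ and, in the unstable case $d_1<d_2$, to pinning down the maximal destabilizing subbundle. This is where the equality case is genuinely used, since for $c_2(\E)>d_1d_2$ the corresponding section vanishes on a nonempty scheme and no splitting occurs; equivalently, one is proving that the hypothesis forces the intermediate cohomology $H^1_*(\E)$ to vanish. The remaining ingredients, namely the Chern-class dictionary and the identification $\E|_{\overline H}\cong\widetilde{D(\A^H,m^H)}$ of Theorem \ref{Z}, are the other external inputs, but they are standard.
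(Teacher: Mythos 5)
First, a remark on the comparison target: the paper does not prove this statement at all; it is quoted as background (Theorem 3.2 of \cite{Y2}), so your attempt can only be measured against Yoshinaga's original argument. Your skeleton is in fact the right one and close in spirit to that argument: the two genuine inputs are the identification of $\E|_{\overline{H}}$ with the sheafified Ziegler restriction (the bundle version of Theorem \ref{Z}) and the Chern-class dictionary $\chi_0(\A;t)=t^2+c_1(\E)t+c_2(\E)$ (Musta\c{t}\u{a}--Schenck; valid here because every central $3$-arrangement is locally free). The direction $(\Rightarrow)$ is fine (it also follows more cheaply from Terao's factorization). But the hard direction $(\Leftarrow)$, the only substantive one, has a genuine hole, and it sits exactly where you flag ``the main obstacle'': you never actually produce the section, so the proof is incomplete rather than merely terse.

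Concretely, two steps fail as written. First, the appeal to ``(semi)stability'' is unjustified and in fact circular: semistability of $\E$ is not a hypothesis, and the bundle you are trying to show $\E$ equals, namely $\cO(-d_1)\oplus\cO(-d_2)$ with $d_1<d_2$, is unstable, with $H^0(\E'(-1))=H^0(\cO(d_2-d_1-1))\neq 0$; so the vanishing you invoke is false precisely in the target situation. Existence of a section of $\E'$ must instead come from Riemann--Roch, $\chi(\E')=2+\tfrac{3}{2}e+\tfrac{1}{2}e^2>0$ with $e=d_2-d_1$, together with $H^2(\E')=H^0(\E'(-e-3))^\vee=0$, and this last vanishing itself needs the Ziegler input: any section of a sufficiently negative twist restricts to zero on $\overline{H}$ (both summands of $\E'|_{\overline{H}}\cong\cO_{\P^1}(e)\oplus\cO_{\P^1}$ twist to negative degree), hence is divisible by $\alpha_H$, and one descends. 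Second, the inference ``the zero scheme has length $c_2(\E')=0$, so the section is nowhere vanishing'' is only valid when the zero scheme has pure codimension two. If the section vanishes on a divisor of degree $a>0$, dividing out gives a section of $\E'(-a)$ whose finite zero locus has length $c_2(\E'(-a))=a(a-e)$, which is not zero a priori; one must bound $a$ by restricting to $\overline{H}$ (a nonzero map $\cO_{\P^1}(a)\to\cO_{\P^1}(e)\oplus\cO_{\P^1}$ forces $a\le e$, while $a(a-e)\ge 0$ forces $a\in\{0,e\}$), and in either case one gets an extension of line bundles that splits since $H^1$ of any line bundle on $\P^2$ vanishes. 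These repairs are available and would complete your outline, but as submitted the argument asserts the key existence and non-vanishing statements without proof, and rests one of them on a false stability claim.
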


Also by using multiarrangement theory, we can show the following.

\begin{theorem}
Let $\A$ be free with $\exp(\A)=(1,d_2,\ldots,d_\ell)$, $1<d_2 <
d_3 \le d_4 \le \cdots \le d_\ell$. 
If $H \in \A$ satisfies $|\A|-|\A^H|=d_3$, then $\A':=\A 
\setminus \{H\}$ is free with $\exp(\A')=
(1,d_2,d_3-1,d_4,\ldots,d_\ell)$.
\label{MDT2}
\end{theorem}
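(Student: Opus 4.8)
The plan is to delete $H$ from the free arrangement $\A$ by combining the $B$-sequence with Saito's criterion, producing an explicit basis of $D(\A')$. First I would record two consequences of freeness of $\A$: one has $\pd_S(D(\A))=0\le 1$, and by Proposition \ref{local} every localization $\A_X$ is free, so $\A$ is in particular locally free along $H$ in codimension three. Hence Theorem \ref{FST} applies and the map $\partial$ of the $B$-sequence is surjective, giving
$$
0 \longrightarrow D(\A) \longrightarrow D(\A') \stackrel{\partial}{\longrightarrow} \overline{SB} \longrightarrow 0 .
$$
Here $\deg B = |\A'|-|\A^H| = (|\A|-1)-|\A^H| = d_3-1$, and since $\overline{B}\neq 0$ the cokernel is $\overline{SB}\cong \overline{S}[-(d_3-1)]$, generated in degree $d_3-1$. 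Thus $D(\A')$ is generated by a homogeneous basis $\theta_E=\theta_1,\theta_2,\dots,\theta_\ell$ of $D(\A)$, with $\deg\theta_i=d_i$, together with one extra derivation $\eta_0\in D(\A')$ of degree $d_3-1$ whose image $\partial(\eta_0)=\overline{\eta_0(\alpha_H)}$ generates $\overline{SB}$.

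Next I would test an explicit candidate basis against Saito's criterion (Theorem \ref{saito}). Discarding $\theta_3$ (degree $d_3$) and inserting $\eta_0$ (degree $d_3-1$) gives
$$
\theta_1,\ \theta_2,\ \eta_0,\ \theta_4,\ \dots,\ \theta_\ell \in D(\A'),
$$
whose degrees $1,d_2,d_3-1,d_4,\dots,d_\ell$ sum to $|\A|-1=|\A'|$, matching the claimed $\exp(\A')$. By Theorem \ref{saito} it then suffices to show these $\ell$ derivations are independent over $S$; granting this, $\A'$ is free with $\exp(\A')=(1,d_2,d_3-1,d_4,\dots,d_\ell)$.

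For the independence I would pass to the fraction field of $S$, where $\theta_1,\dots,\theta_\ell$ is a basis, and expand $\eta_0=\sum_i b_i\theta_i$; a degree count forces $\deg b_3=-1$. Since the coefficient determinant is alternating, replacing $\theta_3$ by $\eta_0$ multiplies $\det(\theta_1,\dots,\theta_\ell)\doteq Q(\A)=\alpha_H\,Q(\A')$ by the single coefficient $b_3$, while $Q(\A')$ must divide the determinant of any $\ell$ members of $D(\A')$. Comparing degrees yields $\det(\theta_1,\theta_2,\eta_0,\theta_4,\dots,\theta_\ell)=c'\,Q(\A')$ with $c'=b_3\,c\,\alpha_H$ a constant, so independence is equivalent to $c'\neq 0$, equivalently $b_3\neq 0$. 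The main obstacle is exactly to exclude the degenerate case $b_3=0$, in which $D(\A')$ would instead have projective dimension one. Here the strict inequality $d_2<d_3$ is essential: it ensures that $\eta_0$, sitting in degree $d_3-1$, can only take over the slot of $\theta_3$ and cannot be absorbed into the lower-degree part spanned by $\theta_E$ and $\theta_2$. To make this rigorous I would reduce modulo $\alpha_H$, where $Q(\A')$ specializes to the nonzero defining polynomial of the multirestriction $(\A^H,m^H)$; by Theorem \ref{Z} one may fix the basis so that $\overline{\theta_2},\dots,\overline{\theta_\ell}$ is a basis of the free module $D(\A^H,m^H)$ with exponents $(d_2,\dots,d_\ell)$, and tracking the degree-$(d_3-1)$ contribution of $\overline{\eta_0}$ against this basis, using once more $d_2<d_3$, forces nondegeneracy, i.e. $c'\neq 0$.

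An essentially equivalent route would first upgrade the numerical identity $|\A|-|\A^H|=d_3$ into the statement that the simple restriction $\A^H$ is free with $\exp(\A^H)=(1,d_2,d_4,\dots,d_\ell)$ — again the hard point, established through the freeness of $(\A^H,m^H)$ — and then apply Terao's deletion theorem (Theorem \ref{Teraodeletion}) directly. In either approach the entire difficulty is concentrated in the multiarrangement input that pins the exponents down and rules out degeneracy; the surjectivity of $\partial$ and the bookkeeping through Saito's criterion are routine once $\A$ is known to be free.
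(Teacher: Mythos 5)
Your reduction via the $B$-sequence and Saito's criterion is sound as far as it goes: freeness of $\A$ gives local freeness along $H$ (Proposition \ref{local}), Theorem \ref{FST} makes $\partial$ surjective, the extra derivation $\eta_0$ has degree $d_3-1$, and the determinant computation correctly shows that everything hinges on the nonvanishing of $b_3$ (equivalently, writing $\alpha_H\eta_0=\sum_i a_i\theta_i$ with $a_i\in S$, of the constant $a_3$). But that nondegeneracy \emph{is} the theorem, and your proposal never proves it. If $a_3=0$ (and, after reindexing any ties $d_i=d_3$, all such constant coefficients vanish), then $\alpha_H\eta_0=a_1\theta_E+a_2\theta_2$ and $\A'$ is exactly the SPOG arrangement of Theorem \ref{SPOG} with level $d_3-1$; nothing in your degree bookkeeping excludes this outcome, which is numerically consistent with both hypotheses $|\A|-|\A^H|=d_3$ and $d_2<d_3$. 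Moreover, the one concrete mechanism you offer --- ``tracking the degree-$(d_3-1)$ contribution of $\overline{\eta_0}$ against the Ziegler basis'' --- is ill-defined: $\eta_0\notin D(\A)$, indeed $\eta_0(\alpha_H)\equiv \overline{B}\not\equiv 0$ modulo $\alpha_H$, so $\eta_0$ does not restrict to $H$ and has no Ziegler restriction; reducing its coefficients modulo $\alpha_H$ produces an element that need not lie in $D(\A^H,m^H)$ at all, so there is no expansion against the basis $\overline{\theta_2},\ldots,\overline{\theta_\ell}$ to track.

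The missing idea, which is the entire content of the paper's proof, is the auxiliary derivation $\varphi:=\frac{Q(\A^H,m^H)}{Q(\A^H)}\,\overline{\theta_E}\in D(\A^H,m^H)$. Its degree is $1+(|\A|-1-|\A^H|)=d_3$: this is where the hypothesis $|\A|-|\A^H|=d_3$ actually enters, converting the numerical assumption into a degree coincidence inside the multimodule $D(\A^H,m^H)$, which is free with exponents $(d_2,\ldots,d_\ell)$ by Theorem \ref{Z}. Writing $\varphi=f\overline{\theta_2}+c\overline{\theta_3}$ and supposing $c=0$ forces $\overline{\theta_2}=g\overline{\theta_E}$ with $g\mid Q(\A^H,m^H)/Q(\A^H)$, whereas membership of $g\overline{\theta_E}$ in $D(\A^H,m^H)$ forces the opposite divisibility $Q(\A^H,m^H)/Q(\A^H)\mid g$, contradicting $\deg g=d_2-1<d_3-1$; hence $c\neq 0$, $\varphi$ can replace $\overline{\theta_3}$ in the basis, and dividing $\varphi$ by $Q(\A^H,m^H)/Q(\A^H)$ shows via Theorem \ref{saito} that the \emph{simple} restriction $\A^H$ is free with $\exp(\A^H)=(1,d_2,d_4,\ldots,d_\ell)$, after which Theorem \ref{Teraodeletion} concludes. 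Your ``second route'' states precisely this plan but labels its key step ``the hard point, established through the freeness of $(\A^H,m^H)$'' without argument; note that freeness of the multirestriction alone is automatic from Ziegler once $\A$ is free and does not by itself yield freeness of $\A^H$ with the required exponents --- the bridge is the $\varphi$-argument above, and it is absent from your proposal.
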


\noindent
\textbf{Proof}.
Let $\theta_E,\theta_2,\ldots,\theta_\ell$ be a basis for $D(\A)$ with $\deg \theta_i=d_i\ (i=2,\ldots,d_\ell)$. By Theorem \ref{Z}, we can choose $\theta_2,\ldots,\theta_\ell$ in such a way that, denoting by $\overline{\theta_i}$ the class of $\theta_i$ modulo $\alpha_H$, then $\overline{\theta_2},\ldots,\overline{\theta_\ell}$ form a basis for $D(\A^H,m^H)$. Note that $\varphi:=
\frac{Q(\A^H,m^H)}{Q(\A^H)} \overline{\theta_E}$ is in $D(\A^H,m^H)$ by definition, and that $\deg \varphi=1+|\A|-1-|\A^H|=d_3=\deg \theta_3$ by the assumption. 
So we can express $\varphi$ is the form 
$$
\varphi=f \theta_2+c \theta_3
$$
for some $f \in S$ and $c \in \K$. Assume that $c=0$. Then $$
\frac{Q(\A^H,m^H)}{Q(\A^H)}\overline{\theta_E}=
f \overline{\theta_2},
$$
showing that $\overline{\theta_2}=g \overline{\theta_E}$ for some $g \in S$ with $g \mid Q(\A^H,m^H)/Q(\A^H,m^H)$. 
For $g \overline{\theta_E}$ to be in $D(\A^H,m^H)$, we must have $(Q(\A^H,m^H)/Q(\A^H) \mid g$, contradicting $d_2 < d_3$. So $c \neq 0$. Hence we can replace $\theta_3$ by $\varphi$ to show that 
$\overline{\theta_2},\varphi,\overline{\theta_4},\ldots,
\overline{\theta_\ell}$
form a basis for $D(\A^H,m^H)$. Then by dividing $\varphi$ by 
$Q'(\A^H,m^H)/Q(\A^H)$, we know that $\overline{\theta_E},
\overline{\theta_2},\overline{\theta_4},\ldots,\overline{\theta_\ell}$ form a basis for $D(\A^H)$ by Theorem \ref{saito}. In particular, 
$\A^H$ is free 
with $\exp(\A^H)=(1,d_2,d_4,\ldots,d_\ell)$. Thus Theorem \ref{Teraodeletion} shows that $\A\setminus \{H\}$ is free 
with exponents $(1,d_2,d_3-1,d_4,\ldots,d_\ell)$. \owari
\medskip

Let us introduce a fundamental tool to analyze deletion of hyperplanes of a free arrangement.

\begin{define}[Definition 1.1, \cite{A5}] 
We say that $\A$ is \textbf{strictly plus-one generated (SPOG)} with $\exp(\A)=(d_1,\ldots,d_\ell)$ and \textbf{level} $d$ if there is a minimal free resolution 
$$
0 \rightarrow S[-d-1] \rightarrow \bigoplus_{i=1}^\ell S[-d_i]\oplus S[-d] \to D(\A) \to 0.
$$
So $\beta_{1,d}=1$ and otherwise $\beta_{k,i}=0$ except when $k=0$ and $i \in \{d_1,\ldots,d_\ell,d\}$. 
\end{define}

\begin{theorem}[Theorem 1.4, \cite{A5}]
Let $\A$ be free with $\exp(\A)=(1,d_2,\ldots,d_\ell)$ and $H \in \A$. If $\A':=\A \setminus \{H\}$ is not free, then there is a minimal free resolution 
$$
0 \rightarrow S[-d-1]\rightarrow 
\oplus_{i=2}^\ell S[-d_i] \oplus S[-d] \rightarrow D_0(\A) 
\rightarrow 0,
$$
where $d:=|\A|-|\A^H|-1= \deg B$. 
 Moreover, for a derivation $\varphi \in 
D(\A') \setminus D(\A)$ of degree $d$ in a minimal set of generators
for $D(\A')$, it holds that $\partial(\varphi)=\overline{B}$ up to non-zero scalar.
\label{SPOG}
\end{theorem}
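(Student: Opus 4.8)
The plan is to realize $D_0(\A')$ as the middle term of a short exact sequence whose two outer terms have explicit, short free resolutions, and then to assemble the resolution by a horseshoe argument, reading off minimality from the non-freeness of $\A'$. (Note that it is $D_0(\A')$, the module of the non-free deletion, that carries the stated plus-one-generated resolution.)

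First I would establish surjectivity of the connecting map. Since $\A$ is free, Proposition \ref{local} shows that every localization $\A_X$ is free, so in particular $\A$ is locally free along $H$ in codimension three, and $\pd_S D(\A)=0\le 1$. Hence Theorem \ref{FST} applies and $\partial\colon D(\A')\to\overline{SB}$ is surjective. Because $\partial(f\theta_E)=\overline{f\,\theta_E(\alpha_H)}=\overline{f\alpha_H}=0$, the map $\partial$ kills the summand $S\theta_E$ and therefore factors through $D(\A')/S\theta_E=D_0(\A')$ with unchanged image; thus the restriction $\partial\colon D_0(\A')\to\overline{SB}$ is still surjective. Feeding this into the second $B$-sequence $0\to D_0(\A)\xrightarrow{j}D_0(\A')\xrightarrow{\partial}\overline{SB}$ yields the short exact sequence
$$
0\longrightarrow D_0(\A)\xrightarrow{\ j\ } D_0(\A')\xrightarrow{\ \partial\ }\overline{SB}\longrightarrow 0 .
$$

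Next I would resolve the two outer terms. As $\A$ is free with $\exp(\A)=(1,d_2,\dots,d_\ell)$, the module $D_0(\A)=\bigoplus_{i=2}^\ell S[-d_i]$ is already free. Moreover $\overline B\neq 0$: if $\overline B=0$ then $\overline{SB}=0$ and the sequence above would give $D_0(\A')\cong D_0(\A)$, forcing $\A'$ free, against the hypothesis. Since $\overline S=S/\alpha_H S$ is a domain, we get $\overline{SB}\cong (S/\alpha_H S)[-d]$ with $d=\deg B=|\A|-1-|\A^H|$, whose minimal $S$-free resolution is $0\to S[-d-1]\xrightarrow{\cdot\alpha_H}S[-d]\to\overline{SB}\to 0$. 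Applying the horseshoe lemma to the short exact sequence then produces
$$
0\longrightarrow S[-d-1]\xrightarrow{\ g\ }\Bigl(\bigoplus_{i=2}^\ell S[-d_i]\Bigr)\oplus S[-d]\xrightarrow{\ f\ } D_0(\A')\longrightarrow 0 .
$$
Here the first $\ell-1$ basis vectors map to $j(\theta_2),\dots,j(\theta_\ell)$ and the last maps to a lift $\varphi\in D_0(\A')\subseteq D(\A')$ of the generator $\overline B$ of $\overline{SB}$, so $\partial(\varphi)=\overline B$ up to a nonzero scalar; since $\partial(\varphi)\neq 0$ and $\ker\big(\partial|_{D(\A')}\big)=D(\A)$, we have $\varphi\in D(\A')\setminus D(\A)$, which gives the ``Moreover'' assertion.

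The decisive point, and the only place where the non-freeness of $\A'$ enters, is minimality. The single syzygy $g(e)=(a_2,\dots,a_\ell,b)$ encodes a relation $\sum_i a_i\,j(\theta_i)+b\,\varphi=0$ with $\deg a_i=d+1-d_i$ and $\deg b=1$; in particular $b$ is a linear form, hence $b\in\mathfrak m$. A failure of minimality would require some coefficient to be a unit, which can only occur for an $a_{i_0}$ with $d_{i_0}=d+1$; but a unit $a_{i_0}$ would let one cancel the summands $S[-d-1]$ and $S[-d_{i_0}]$, exhibiting $D_0(\A')$ as free and hence $\A'$ as free, a contradiction. Therefore every entry of $g$ lies in $\mathfrak m$, the resolution is minimal, and $\pd_S D_0(\A')=1$. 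I expect this cancellation-exclusion step to be the main (indeed the only non-formal) obstacle: everything upstream is automatic once Theorem \ref{FST} supplies surjectivity of $\partial$, whereas minimality is precisely what converts the generic horseshoe output into the strictly plus-one-generated resolution.
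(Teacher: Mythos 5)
Your proof is correct, and you rightly observed that the module being resolved must be $D_0(\A')$ rather than $D_0(\A)$ (a typo in the statement as reproduced here). The paper quotes this theorem from \cite{A5} without proof, but its proof of the generalization to several hyperplanes, Theorem \ref{SPOGMDT}, displays the intended mechanism, and your argument matches it in substance while differing in packaging: where the paper (following \cite{A5}) works with $D(\A')$ directly, exhibiting the generators $\theta_E,\theta_2,\ldots,\theta_\ell,\varphi$ and checking by hand that the single relation $\alpha_H\varphi=\sum_i f_i\theta_i$ generates all syzygies, you extract the same data formally from the short exact sequence $0\to D_0(\A)\to D_0(\A')\to\overline{SB}\to 0$ (surjectivity of $\partial$ via Proposition \ref{local} plus Theorem \ref{FST}, exactly as the paper does in Theorem \ref{pd1}, and correctly pushed down to $D_0$ using $\partial(\theta_E)=0$) together with the horseshoe lemma applied to $0\to S[-d-1]\xrightarrow{\alpha_H}S[-d]\to\overline{SB}\to 0$. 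This buys the resolution and the syzygy degree $d+1$ with no generator bookkeeping, and your minimality step --- a unit entry could only pair $S[-d-1]$ against a summand $S[-d_i]$ with $d_i=d+1$ and would split off, making $D_0(\A')$, hence $\A'$, free --- is the same contradiction-with-freeness device the paper uses inside Theorem \ref{SPOGMDT}. One small point to tighten: the ``Moreover'' clause concerns an arbitrary degree-$d$ element $\varphi'\in D(\A')\setminus D(\A)$ of an arbitrary minimal generating set, while you verify it only for your chosen lift; this closes in one line, since by degree reasons $\varphi'=c\varphi+\sum_i f_i\,j(\theta_i)+f\theta_E$ with $c\in\K$, the map $\partial$ kills every generator except $\varphi$, so $\partial(\varphi')=c\overline{B}$, and $c\neq 0$ because $\ker(\partial)=D(\A)$.
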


Let us recall some free arrangements which appear frequently in the rest of this paper.

\begin{theorem}[Theorem 1.2, \cite{Y1}] \label{thm:CatShi}
Let $\Phi$ be a root system, $\Phi^+$ its set of positive roots. Let $k \ge 0$ and $h$ the Coxeter number of $\Phi$. Also let $\exp(\A_{\Phi^+})=(d_1,\ldots,d_\ell)$.
\begin{enumerate}[label=(\roman*)]
    \item \label{shi-def}
$\A:=c\A_{\Phi^+}^{[-k,k+1]}$ is free with exponents 
$(1,kh,\ldots,kh)$.
\item \label{catalan-def}
$\B:=c\A_{\Phi^+}^{[-k,k]}$ is free with exponents 
$(1,kh+d_1,,\ldots,kh+d_\ell)$.
\end{enumerate}
\end{theorem}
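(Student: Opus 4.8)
The plan is to reduce each freeness statement to the freeness of a constant–multiplicity Coxeter multiarrangement, obtained as a Ziegler restriction, and then to glue this ``local'' information to a characteristic–polynomial computation by means of Yoshinaga's freeness criterion (the higher–rank counterpart of Theorem~\ref{Ycriterion}). Throughout, $H_\infty=\{z=0\}$ denotes the hyperplane at infinity of the cone.

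First I would compute the Ziegler restriction $(c\A^{H_\infty},m^{H_\infty})$ along $H_\infty$. For a fixed $\alpha\in\Phi^+$, the hyperplanes $\{\alpha=jz\}$ with varying level $j$ are mutually parallel, so they all restrict to the single hyperplane $\{\alpha=0\}\subset H_\infty$; hence $c\A^{H_\infty}$ is exactly the Coxeter arrangement $\A_{\Phi^+}$, realised inside $H_\infty\cong\K^\ell$. The multiplicity $m^{H_\infty}(\{\alpha=0\})$ is the number of integral levels, which is the same for every root: an even multiplicity $m=2k$ in the Shi case~\ref{shi-def} and an odd multiplicity $m=2k+1$ in the Catalan case~\ref{catalan-def}. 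Thus the Ziegler restriction is the constant–multiplicity Coxeter multiarrangement $(\A_{\Phi^+},m)$, and these multiarrangement exponents provide the candidate essential exponents for the cone (cf. Theorem~\ref{Z}).

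Next I would invoke Terao's theorem on Coxeter multiarrangements with constant multiplicity, which gives $\exp(\A_{\Phi^+},2k)=(kh,\dots,kh)$ and $\exp(\A_{\Phi^+},2k+1)=(kh+d_1,\dots,kh+d_\ell)$; for $k=0$ the latter recovers $\exp(\A_{\Phi^+})=(d_1,\dots,d_\ell)$. In parallel I would compute $\chi_0(\A;t)$ by the finite–field method \cite{Ath1,Ath2}, obtaining the factorizations $\chi_0=(t-kh)^\ell$ in case~\ref{shi-def} and $\chi_0=\prod_{i=1}^\ell (t-kh-d_i)$ in case~\ref{catalan-def}; in both cases the roots are precisely the multiarrangement exponents just found. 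Yoshinaga's criterion then applies: a central arrangement is free with $\exp=(1,e_1,\dots,e_\ell)$ as soon as its Ziegler restriction along some $H$ is free with $\exp=(e_1,\dots,e_\ell)$ and $\chi_0$ splits as $\prod_i(t-e_i)$. This yields the freeness of $c\A$ with the exponents asserted in Theorem~\ref{thm:CatShi}.

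The hard part is not the gluing but the two deep inputs it feeds on: Terao's freeness of Coxeter multiarrangements, which supplies the exponents of the Ziegler restriction, and the characteristic–polynomial factorization, which supplies the matching global datum. Both are substantial theorems; once they are in hand, Yoshinaga's criterion does the bookkeeping almost formally. The only genuinely arrangement–specific verification, namely that the multiplicity at infinity is constant and equal to the number of levels, is elementary and follows from the collapse of parallel translates noted above.
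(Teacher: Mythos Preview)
The paper does not prove Theorem~\ref{thm:CatShi}; it is quoted from \cite{Y1}. Your outline is in fact a reconstruction of Yoshinaga's original argument, so there is nothing to compare on the paper's side. The strategy---Ziegler restriction onto $H_\infty=\{z=0\}$, identification of the restriction with the constant-multiplicity Coxeter multiarrangement, Terao's computation of its exponents, and then a gluing step---is exactly the intended one.

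There is, however, a genuine gap in the gluing step. The higher-rank Yoshinaga criterion is \emph{not} the statement you wrote. For $\ell\ge 4$ (so the cone lives in $\K^{\ell+1}$ with $\ell\ge 3$), the criterion from \cite{Y2} reads: $\A$ is free if and only if the Ziegler restriction $(\A^H,m^H)$ is free \emph{and} $\A$ is locally free along $H$, i.e.\ $\A_X$ is free for every $X\in L(\A)$ with $\{0\}\neq X\subset H$. The factorisation of $\chi_0$ is a \emph{consequence} of freeness (Terao's factorisation theorem), not a sufficient hypothesis; replacing local freeness by ``$\chi_0$ splits as $\prod_i(t-e_i)$'' gives a false criterion in rank $\ge 4$. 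Only in the three-dimensional case (Theorem~\ref{Ycriterion} in the paper) does the numerical condition on $\chi_0$ suffice, and that is because every multiarrangement in $\K^2$ is automatically free.

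What is missing from your sketch is therefore the verification of local freeness along $H_\infty$. This is done by induction on the rank: for $X\subset H_\infty$ with $X\neq\{0\}$, the localisation $(c\A)_X$ is the cone of the extended Shi (resp.\ Catalan) arrangement attached to the parabolic subsystem $\Phi_X$, hence free by the inductive hypothesis. Once this step is added, your two ``deep inputs'' (Terao's multiarrangement freeness and the Athanasiadis--Postnikov--Stanley computation of $\chi_0$) combine with the correct form of Yoshinaga's criterion to give the result. As a minor aside, the number of levels in $[-k,k+1]$ is $2k+2$, not $2k$, so the constant multiplicity in case~\ref{shi-def} is $2k+2$ and the resulting exponents are $((k+1)h,\ldots,(k+1)h)$; the ``$kh$'' in the displayed statement is a typographical slip in the paper that you have carried over.
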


The arrangement appearing in \ref{shi-def} above is called an \textbf{extended Shi arrangement}, while the one described in \ref{catalan-def} is called an \textbf{extended 
Catalan arrangement}.

\subsection{Resolutions and addition/deletion}

Let us introduce two fundamental results to determine the resolution of a line arrangement. In the next lemma, we rewrite resolutions in a slightly different form from \eqref{resolution:M}.

\begin{lemma}
Let $\A'$ be a line arrangement in $\P^2$, $H$ be a hyperplane not in $\A'$ and set $\A=\A' \cup \{H\}$. Let 
$$
0 \rightarrow F_2 \rightarrow F_1 \rightarrow D_0(\A) \rightarrow 0
$$
be a minimal graded free resolution of $D_0(\A)$ with 
$$
F_i=\bigoplus_{j=1}^{n_i} S[-d_{ij}].
$$
If $d:=|\A|-1-|\A^H| >\max_{1 \le j \le n_{1}} d_{1j}$, and
$\max_{1 \le j \le d_{n_2}} d_{2j} <d+3$, then we have a minimal free
 resolution
 $$
0 \rightarrow F_2 \oplus S[-d-1] \rightarrow F_1
\oplus S[-d]\rightarrow D_0(\A') \rightarrow 0.
$$
\label{lemma1}
\end{lemma}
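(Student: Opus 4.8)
The plan is to realize $D_0(\A')$ as an extension of $D_0(\A)$ by a line bundle supported on $H$ and then to resolve the two outer terms. Since $\A'=\A\setminus\{H\}$, the $B$-sequence (the proposition following Definition~\ref{B}) provides the left-exact sequence $0 \to D_0(\A) \xrightarrow{j} D_0(\A') \xrightarrow{\partial} \overline{SB}$, where $\overline{SB}$ is the cyclic $\overline{S}$-module generated by the class of Terao's polynomial $B=B(\A,H)$, of degree $\deg B = |\A'|-|\A^H| = |\A|-1-|\A^H| = d$. The given resolution shows $\pd_S D_0(\A)\le 1$, hence $\pd_S D(\A)=\pd_S D_0(\A)\le 1$; moreover $\A$ lives in $\K^3$, so its only flat of codimension three is the center $\cap_{H\in\A}H$, and thus $\A$ is vacuously locally free along $H$ in codimension three. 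Theorem~\ref{FST} then forces $\partial$ to be surjective, yielding the short exact sequence
\[
0 \to D_0(\A) \xrightarrow{j} D_0(\A') \xrightarrow{\partial} \overline{SB} \to 0 .
\]
Since $\overline{S}=S/\alpha_H S$ is a polynomial ring in two variables, hence a domain, and the reduction $\overline{B}$ is nonzero, we may identify $\overline{SB}\cong \overline{S}[-d]$.

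Next I would resolve the quotient. The single linear form $\alpha_H$ furnishes the minimal $S$-free resolution $0 \to S[-d-1] \xrightarrow{\alpha_H} S[-d] \to \overline{S}[-d] \to 0$. Feeding this together with the given resolution $0\to F_2\xrightarrow{\phi_2} F_1\to D_0(\A)\to 0$ into the horseshoe lemma, applied to the short exact sequence above, produces a (possibly non-minimal) free resolution
\[
0 \to F_2\oplus S[-d-1] \to F_1\oplus S[-d] \to D_0(\A') \to 0 ,
\]
whose differential is block lower-triangular, $\left(\begin{smallmatrix} \phi_2 & \tau \\ 0 & \alpha_H\end{smallmatrix}\right)$, with $\tau\colon S[-d-1]\to F_1$ the horseshoe comparison map.

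It remains to check minimality, which I would read off from the long exact sequence of $\operatorname{Tor}^S_\bullet(-,\K)$. The graded Betti numbers of the three modules add up exactly when the single relevant connecting map $\operatorname{Tor}_1^S(\overline{S}[-d],\K)=\K[-d-1]\to \operatorname{Tor}_0^S(D_0(\A),\K)=\bigoplus_j \K[-d_{1j}]$ vanishes; being degree-preserving it does so precisely because $d+1\notin\{d_{1j}\}$, which is the content of $d>\max_j d_{1j}$. Equivalently, in the block matrix the entries of $\tau$ have degree $(d+1)-d_{1j}\ge 2$ and so lie in $\mathfrak{m}$, while $\phi_2$ has entries in $\mathfrak{m}$ by minimality of the given resolution and the corner entry is $\alpha_H\in\mathfrak{m}$. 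The complementary bound $\max_j d_{2j}<d+3$ confines the syzygies of $D_0(\A)$ to degrees at most $d+2$, which is the numerical input needed to keep the new syzygy in degree $d+1$ from interacting with the construction, so that the displayed resolution is already minimal and of length one.

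The conceptual heart of the argument, and the step I expect to be the main obstacle, is the surjectivity of $\partial$. Without it the sequence is only left exact, $\overline{SB}$ must be replaced by the proper submodule $\im\partial$, and $D_0(\A')$ could in principle acquire projective dimension two; it is exactly the hypothesis $\pd_S D(\A)\le 1$, read off from the given resolution, that feeds Theorem~\ref{FST} and rescues surjectivity in the rank-three setting. Once this is secured, everything else is degree bookkeeping governed by the two displayed inequalities, which together forbid any unit entry in the differential and hence single out the minimal resolution.
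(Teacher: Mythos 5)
Your overall skeleton (the $B$-sequence, the horseshoe lemma, and minimality via degree bookkeeping) matches the paper's, but the step you yourself identify as the conceptual heart --- surjectivity of $\partial$ --- has a genuine gap. You derive it from Theorem~\ref{FST} by observing that in $\K^3$ the local-freeness-in-codimension-three hypothesis is vacuous (the only codimension-three flat is the center, which Definition~\ref{locallyfree} excludes). But note that in $\K^3$ the hypothesis $\pd_S(D(\A))\le 1$ is \emph{also} automatic: $D(\A)$ is reflexive, hence of depth at least two, so Auslander--Buchsbaum gives $\pd_S(D(\A))\le 1$ for every central arrangement in $\K^3$. Your reading of Theorem~\ref{FST} would therefore make $\partial$ surjective for \emph{every} line arrangement in $\P^2$, and this is false. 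For instance, take $\A$ to be four generic lines $\{x,y,z,x+y+z\}$ and $H=\{x+y+z=0\}$; then $B=1$, so $\overline{SB}\cong\overline{S}$ with generator in degree $0$, while $D(\A')_0=0$ for the Boolean arrangement $\A'=\{x,y,z\}$, so $\overline{B}$ is not in the image of the degree-preserving map $\partial$. (This does not contradict the Lemma itself, since there $d=0\not>\max_j d_{1j}=2$, but it does refute your surjectivity argument.) Theorem~\ref{FST} is meaningful, and is used in the paper, only in ambient dimension at least four, where $\pd_S(D(\A))\le 1$ combined with reflexivity forces the vanishing of the relevant local cohomology; in $\K^3$ it carries no content on its own.

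The tell-tale symptom is that your proof never genuinely uses the hypothesis $\max_j d_{2j}<d+3$: your horseshoe/Tor bookkeeping needs only $d>\max_j d_{1j}$, so you would have ``proved'' the Lemma without one of its hypotheses. In the paper that hypothesis is exactly what rescues surjectivity: the sheafified sequence is exact because reflexive sheaves on $\P^2$ are locally free, and then by Proposition~\ref{Eis} module-level surjectivity follows once $H^1(\widetilde{D_0(\A)}(t))=0$ for $t\ge d$; from the given resolution this $H^1$ embeds in $\bigoplus_j H^2(\cO_{\P^2}(t-d_{2j}))$, which vanishes precisely when $d_{2j}\le d+2$ for all $j$. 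If you replace your appeal to Theorem~\ref{FST} by this cohomological argument, the rest of your proof goes through; indeed your horseshoe-plus-Tor treatment of minimality (all blocks of the differential, including the comparison map $\tau$ with entries of degree $d+1-d_{1j}\ge 2$, lie in $\mathfrak{m}$) is correct and arguably cleaner than the paper's explicit analysis of generators and syzygies. One minor misstatement: failure of surjectivity could not make $\pd_S D_0(\A')$ equal to two --- that is impossible in $\K^3$ by reflexivity --- it would instead change the degrees and numbers of generators and syzygies, i.e.\ the shape of the resolution.
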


\begin{proof}
Let $(\theta_1,\ldots,\theta_{n_1})$ be a set of minimal generators for $D_0(\A)$ with $\deg \theta_i=d_i$, for $1 \le i \le n_1$. Consider the $B$-sequence in Proposition \ref{B}:
\begin{equation}
    \label{B-sequence}
0 \rightarrow D_0(\A) \rightarrow D_0(\A') 
\rightarrow \overline{S}[-d].
\end{equation}

Sheafifying the above sequence with an exact sequence of coherent sheaves of $\mathcal{O}_{\P^2}$-modules
$$
0 \rightarrow \widetilde{D_0(\A)} \rightarrow \widetilde{D_0(\A') }
\rightarrow \mathcal{O}_H(-d) \rightarrow 0
$$

This sequence right-exact since a line arrangement $\A$ in $\P^2$ is locally free. So, to show that \eqref{B-sequence} is right-exact,  by using Proposition \ref{Eis}, it suffices to show that 
\[
H^1(\widetilde{D_0(\A)}(d))=0.
\]
However, this follows by the minimal free resolution and the condition on $d_{2j}$. 
Therefore, we have a set of elements $D_0(\A')$, namely
$$
(\theta_1,\ldots,\theta_{n_1}, \varphi),
$$
where $\varphi $ maps to the a generator of $S[-d]$, so that $\deg \varphi=d$. 

Since $(\theta_1,\ldots,\theta_{n_1})$ is a system of generators of $D_0(\A)$, and $\varphi$ generates $S[-d]$, $(\theta_1,\ldots,\theta_{n_1}, \varphi)$ is a system of generators of $D_0(\A')$, owing to the right exactness of \eqref{B-sequence}. 
Let us check that these generators are minimal.

Clearly, we cannot remove $\varphi$ otherwise the sequence cannot \eqref{B-sequence} be right exact. If we can remove, say $\theta_1$, then by the assumption on $d$ and $d_{1j}$, $\theta_1$ can be expressed as a linear combination of $\theta_2,\ldots,\theta_{n_1}$, contradicting their minimality as a generator for $D_0(\A')$. So these form a minimal set of generators for 
$D_0(\A')$. Looking at the relations among these generators, since $\alpha_H \varphi \in D_0(\A)$, there is a new relation among them at degree $d+1$, and the syzygy corresponding to this relation cannot be removed. Also, we cannot remove the other syzygies since would contradict the minimality of the generators of $D_0(\A)$.
\end{proof}

\begin{lemma}
Let $\A'$ be a line arrangement in $\P^2$, let $H$ be a hyperplane not in $\A'$ and set $\A=\A' \cup \{H\}$. Let 
$$
0 \rightarrow F_2 \rightarrow F_1 \rightarrow D_0(\A') \rightarrow 0
$$
be a minimal graded free resolution with 
$$
F_i=\bigoplus_{j=1}^{n_i} S[-d_{ij}].
$$
If $d:=|\A^H|-1 >\max_{1 \le j \le n_{1}} d_{1j}$, and
$\max_{1 \le j \le d_{n_2}} d_{2j} <d+2$, then we have a minimal free
 resolution
 $$
0 \rightarrow F_2[-1] \oplus S[-d-1] \rightarrow F_1[-1]
\oplus S[-d]\rightarrow D_0(\A) \rightarrow 0.
$$
\label{lemma2}
\end{lemma}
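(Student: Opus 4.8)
The plan is to mirror the proof of Lemma \ref{lemma1}, but to run the argument in the \emph{addition} direction, with a companion short exact sequence whose cokernel has degree $|\A^H|-1$ rather than $\deg B$. Concretely, I would first construct an explicit degree-preserving injection
$$
\Psi \colon D_0(\A')[-1] \hookrightarrow D_0(\A), \qquad \Psi(\theta) = \alpha_H\,\theta - \frac{\theta(\alpha_H)}{|\A|}\,\theta_E .
$$
A direct check, using $Q(\A)=\alpha_H Q(\A')$ and $\theta(Q(\A'))=0$, shows that $\Psi(\theta)\in D(\A)$ and $\Psi(\theta)(Q(\A))=0$, so $\Psi$ does land in $D_0(\A)$; injectivity follows because $\theta_E\notin D_0(\A')$, so $\alpha_H\theta = (\theta(\alpha_H)/|\A|)\theta_E$ forces $\theta=0$. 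Since $\A$ and $\A'$ agree away from $H$ and $\Psi$ is multiplication by $\alpha_H$ up to the Euler correction, $\Psi$ becomes an isomorphism after inverting $\alpha_H$, so its cokernel is supported on $H$.

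Next, set $\E=\widetilde{D_0(\A)}$ and $\E'=\widetilde{D_0(\A')}$, which are rank-$2$ vector bundles on $\P^2$ since $D_0$ is reflexive and $\P^2$ is a smooth surface. Sheafifying $\Psi$ gives $0 \to \E'(-1)\xrightarrow{\psi}\E \to \mathcal{Q}\to 0$ with $\mathcal{Q}$ supported on the line $H$, and the heart of the argument is to identify $\mathcal{Q}$. The determinant $\det\psi$ is a global section of $\det\E\otimes\det(\E'(-1))^{\vee}=\cO_{\P^2}(c_1(\E)-c_1(\E')+2)$; since $c_1(\widetilde{D_0(\A)})=-(|\A|-1)$ (read off from $\chi_0$), one has $c_1(\E)=c_1(\E')-1$, so $\det\psi$ is linear, and as $\mathcal{Q}$ is supported on $H$ it must be $\alpha_H$ up to a scalar. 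In particular $\det\psi$ is reduced, which forces $\psi$ to drop rank by exactly one along $H$ with no embedded points, so $\mathcal{Q}\cong\cO_H(m)$ for some $m$. I would then pin down $m$ from $c_2$: the deletion--restriction formula $\chi(\A;t)=\chi(\A';t)-\chi(\A^H;t)$ with $\chi(\A^H;t)=(t-1)(t-|\A^H|+1)$ yields $\chi_0(\A;0)-\chi_0(\A';0)=|\A^H|-1$, i.e. $c_2(\E)-c_2(\E')=|\A^H|-1$; comparing with the Chern-class computation for the sequence gives $m=-(|\A^H|-1)=-d$, hence $\mathcal{Q}\cong\cO_H(-d)$.

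With the sequence $0 \to \E'(-1)\to\E\to\cO_H(-d)\to 0$ in hand, I would take $H^0_*$ and invoke Proposition \ref{Eis} to obtain
$$
0 \to D_0(\A')[-1] \to D_0(\A) \to \oS[-d] \to H^1_*(\E'(-1)).
$$
Right-exactness (surjectivity onto $\oS[-d]$) requires $H^1(\E'(s))=0$ for $s\ge d-1$; from the given resolution $0\to F_2\to F_1\to D_0(\A')\to 0$ one has $H^1(\widetilde{F_1}(s))=0$ always and $H^2(\widetilde{F_2}(s))=0$ once $s\ge\max_j d_{2j}-2$, so $H^1(\E'(s))=0$ there. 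Thus the hypothesis $\max_j d_{2j}<d+2$ is exactly what makes the module sequence $0\to D_0(\A')[-1]\to D_0(\A)\to\oS[-d]\to 0$ right-exact. Applying the horseshoe lemma, with $F_\bullet[-1]$ resolving $D_0(\A')[-1]$ and the Koszul resolution $0\to S[-d-1]\xrightarrow{\alpha_H}S[-d]\to\oS[-d]\to 0$, produces the stated resolution
$$
0 \to F_2[-1]\oplus S[-d-1] \to F_1[-1]\oplus S[-d] \to D_0(\A) \to 0.
$$
Minimality is the last check: the connecting block $S[-d-1]\to F_1[-1]$ has entries of degree $d-d_{1j}$, which are positive precisely because $d>\max_j d_{1j}$, and the remaining blocks inherit minimality from $F_\bullet$ and from $\alpha_H\in\mathfrak{m}$. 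I expect the main obstacle to be the identification of $\mathcal{Q}$: ensuring the cokernel is a genuine line bundle on $H$ (no zero-dimensional torsion) and computing its degree as $-(|\A^H|-1)$, which is where the reduced-determinant observation and the deletion--restriction computation of $c_2$ do the real work.
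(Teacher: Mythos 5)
Your proposal is correct, and its backbone coincides with the paper's: both rest on the twisted short exact sequence $0 \to D_0(\A')[-1] \to D_0(\A) \to \oS[-d] \to 0$, prove its right-exactness by sheafifying and checking $H^1(\widetilde{D_0(\A')}(d-1))=0$ via the given resolution (this is exactly where $\max_j d_{2j} < d+2$ enters), and then obtain minimality from $d > \max_j d_{1j}$. Where you diverge is in the two supporting steps. First, the paper identifies the cokernel intrinsically: it restricts derivations to $H$ and recognizes the third term as $D_0(\A^H)$, which is free over $\oS$ with generator in degree $d$ because the central $2$-arrangement $\A^H$ has $\exp(\A^H)=(1,d)$; right-exactness of the sheafified sequence is then immediate from local freeness of line arrangements. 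You instead pin down the cokernel extrinsically, via the determinant of $\psi$ (a nonzero linear form whose zero divisor is a full line contained in $H$, hence $\alpha_H$ up to scalar, forcing $\coker(\psi) \simeq \cO_H(m)$) together with the Chern-class bookkeeping $c_2(\E)-c_2(\E') = |\A^H|-1$ from deletion--restriction; this is a genuinely different route, and note it imports the identity $c_2(\widetilde{D_0(\A)})=\chi_0(\A;0)$ (Musta\c{t}\u{a}--Schenck), an external input the paper's argument does not need here, though the paper does use it implicitly elsewhere. Second, you assemble the resolution by the horseshoe lemma and verify minimality mechanically from the block-triangular differential (connecting entries of degree $d-d_{1j}>0$, diagonal entry $\alpha_H \in \mathfrak{m}$), whereas the paper argues by hand that the generators $(\alpha_H\theta_1,\ldots,\alpha_H\theta_{n_1},\varphi)$ are minimal and that the unique new syzygy sits in degree $d+1$; the two are equivalent, yours being more systematic. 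A point in your favor: your explicit map $\Psi(\theta)=\alpha_H\theta - \frac{\theta(\alpha_H)}{|\A|}\theta_E$ is actually more careful than the paper's text, which writes the inclusion as plain multiplication by $\alpha_H$ (and its displayed sequence even misprints $D_0(\A')$ for $D_0(\A)$); plain $\alpha_H\theta$ satisfies $(\alpha_H\theta)(Q(\A))=\theta(\alpha_H)Q(\A)$, so it does \emph{not} lie in $D_0(\A)$ in general, and your Euler correction is precisely what fixes this. The only soft spots in your write-up are routine: the assertion that $\Psi$ becomes an isomorphism after inverting $\alpha_H$ deserves the one-line computation (given $\eta \in D_0(\A)$, set $c=\eta(\alpha_H)/(|\A'|\alpha_H)$ and check $\Psi\bigl((\eta+c\,\theta_E)/\alpha_H\bigr)=\eta$), and the claim that a rank-one drop with reduced determinant yields a line bundle cokernel should be justified locally (one matrix entry is a unit at each point of $H$, so the matrix reduces to $\mathrm{diag}(1,\alpha_H)$); both are true and easily supplied.
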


\begin{proof}Let $(\theta_1,\ldots,\theta_{n_1})$ be a minimal system of generators for $D_0(\A')$ with $\deg \theta_i=d_i$, for $1 \le i \le n_1$. Now consider the Euler sequence:
\begin{equation} \label{euler}
0 \rightarrow D_0(\A')[-1] \rightarrow D_0(\A') 
\rightarrow D_0(\A^H).
\end{equation}

Note that $\A^H$, being a two-dimensional central arrangement, is free with $\exp(\A^H)=(1,d)$.
Also, the sheafified sequence 
$$
0 \rightarrow \widetilde{D_0(\A')}(-1) \rightarrow \widetilde{D_0(\A) }
\rightarrow  \mathcal{O}_H(-d) \rightarrow 0
$$
is right-exact since a line arrangement in $\P^2$ is locally free. So, to show that \eqref{euler} is right-exact, again it suffices to show that 
\[H^1(\widetilde{D_0(\A')}(d-1))=0.\]
This follows by the minimal free resolution and the condition on $d_{2j}$. So, by the exactness of \eqref{euler}, we have a system of generators of $D_0(\A)$:
$$
(\alpha_H \theta_1, \ldots,\alpha_H \theta_{n_1}, \varphi),
$$
where $\varphi$ maps to a generator of the non-Euler basis element of $D_0(\A^H)$, thus $\deg \varphi=d$.

Let us show that these generators are minimal. Clearly we cannot remove $\varphi$, otherwise \eqref{euler} cannot be exact on the right. If we can remove, say $\alpha_H\theta_1$, then by the assumption on $d$ and $d_{1j}$, $\alpha_H\theta_1$ can be expressed as a linear combination of $\alpha_H \theta_2,\ldots,\alpha_H \theta_{n_1}$. Namely, if 
$\varphi$ occurs in the expression of $\alpha_H\theta_1$, then the image of $\varphi$ cannot be a basis for $D_0(\A^H)$. 
So we have a relation
$$
\alpha_H \theta_1=\sum_{i=2}^{n_1} f_i\alpha_H  \theta_i.
$$
By dividing by $\alpha_H$, we know that $\theta_2 \in \langle 
\theta_3,\ldots,\theta_{n_1}\rangle$, 
contradicting the minimality of our generators of $D_0(\A')$. So they form a minimal set of generators for 
$D_0(\A)$. About the relations among these generators, since $\alpha_H \varphi$ maps to zero in $ D_0(\A^H)$, there is a new relation among these generators, in degree $d+1$. Since this is the only relation including $\varphi$, we cannot remove this syzygy. Also, we cannot remove the other syzygies since this would contradicts the minimality of the generators for $D_0(\A')$.
\end{proof}

\subsection{A multiple deletion theorem for SPOG arrangements}

To prove Theorem \ref{A3case}, let us show the following, which is a SPOG-version of the multiple deletion 
theorem in \cite{AT2}.

\begin{theorem} \label{multiple-deletion}
Let $\A$ be a free arrangement with $\exp(\A)=(1,d_2,\ldots,d_\ell)$ and 
let $H_1,\ldots,H_p \in \A$ be distinct hyperplanes. 
Let $e_i:=|\A|-|\A^{H_i}|-1$ for $i=1,\ldots,p$. If $|\A_{H_i \cap H_j}|=2 $ all $1 \le i <j \le p$, then  for any $H \in \A':= 
\A \setminus \{H_1,\ldots,H_p\}$, $D(\A')$ has the following 
free resolution:
$$
0 \rightarrow 
\bigoplus_{i=1}^p S[-e_i-1] \rightarrow 
\bigoplus_{i=1}^\ell S[-d_i] \oplus \bigoplus_{i=1}^p S[-e_i] 
\rightarrow D(\A') \rightarrow 0.
$$
Moreover, if $\A \setminus \{H_i\}$ is not free for all $1 \le i \le p$, then this is a minimal free 
resolution. 
\label{SPOGMDT}
\end{theorem}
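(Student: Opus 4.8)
The plan is to induct on $p$, deleting the hyperplanes one at a time and tracking how the resolution grows. For $p=1$ the statement is Theorem \ref{SPOG}, after restoring the Euler summand $S\theta_E=S[-d_1]=S[-1]$ to pass from $D_0(\A\setminus\{H_1\})$ to $D(\A\setminus\{H_1\})$; when $\A\setminus\{H_1\}$ happens to be free the same resolution holds non-minimally, the pair $S[-e_1-1]\to S[-e_1]$ cancelling against one $S[-d_i]$ since then $e_1+1=d_i$. For the inductive step write $\A''=\A\setminus\{H_1,\ldots,H_{p-1}\}$ and $\A'=\A''\setminus\{H_p\}$, and combine the inductive resolution of $D(\A'')$ with the $B$-sequence of Proposition \ref{B} for the pair $(\A'',H_p)$,
$$
0\rightarrow D(\A'')\rightarrow D(\A')\stackrel{\partial}{\rightarrow}\overline{S}\,B(\A'',H_p),
$$
by a horseshoe argument.

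First I would pin down $\deg B(\A'',H_p)=e_p$. The pairwise double-point hypothesis $|\A_{H_i\cap H_j}|=2$ guarantees that each flat $H_i\cap H_p$ (for $i<p$) is contained in no hyperplane of $\A$ other than $H_i$ and $H_p$, and that these flats are pairwise distinct; hence deleting $H_1,\ldots,H_{p-1}$ erases exactly $p-1$ points from the restriction onto $H_p$, so $|(\A'')^{H_p}|=|\A^{H_p}|-(p-1)$. Together with $|\A''|=|\A|-(p-1)$ this gives $\deg B(\A'',H_p)=|\A''|-1-|(\A'')^{H_p}|=|\A|-1-|\A^{H_p}|=e_p$, so that $\overline{S}\,B(\A'',H_p)\cong\overline{S}[-e_p]$.

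Next I would show the $B$-sequence is right-exact, i.e.\ that $\partial$ is surjective, via Theorem \ref{FST} applied to $(\A'',H_p)$. The hypothesis $\pd_S(D(\A''))\le 1$ is furnished by the inductive resolution, so everything reduces to checking that $\A''$ is locally free along $H_p$ in codimension three, and this is the step I expect to fight hardest. For a codimension-three flat $X\subset H_p$ in $L((\A'')^{H_p})$ one has $\A''_X=\A_X\setminus\{G_1,\ldots,G_s\}$, where $G_1,\ldots,G_s$ are the members of $\{H_1,\ldots,H_{p-1}\}$ through $X$; here $\A_X$ is free by Proposition \ref{local}, and the double-point hypothesis descends to $\A_X$, so the special hyperplanes $G_1,\ldots,G_s,H_p$ meet pairwise in double points of the rank-three arrangement $\A_X$. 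The claim is that deleting the $G_c$ from the free rank-three arrangement $\A_X$ preserves freeness. This would be false for an arbitrary free rank-three arrangement, but the freeness of $\A_X$ itself is a strong constraint: a short M\"obius-function computation shows that a configuration in which $\A''_X$ fails to be free (for instance $\A''_X$ a general-position bundle) cannot be completed to a free $\A_X$ by adding back double-point hyperplanes, which is what rules out the naive counterexamples. I would establish the claim either with Yoshinaga's criterion (Theorem \ref{Ycriterion}), by tracking the characteristic polynomial and the Ziegler-restriction exponents through the successive deletions of the double-point hyperplanes $G_c$ and verifying the factorization $\chi_0(\A''_X;0)=d_1d_2$, or by invoking the multiple deletion theorem of \cite{AT2} in this local rank-three situation.

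With right-exactness in hand, the short exact sequence $0\to D(\A'')\to D(\A')\to\overline{S}[-e_p]\to 0$ together with the resolution $0\to S[-e_p-1]\stackrel{\alpha_{H_p}}{\to}S[-e_p]\to\overline{S}[-e_p]\to 0$ feeds the horseshoe lemma, yielding the asserted resolution of $D(\A')$ with first term $\bigoplus_{i=1}^{p-1}S[-e_i-1]\oplus S[-e_p-1]=\bigoplus_{i=1}^p S[-e_i-1]$ and zeroth term $\bigoplus_{i=1}^\ell S[-d_i]\oplus\bigoplus_{i=1}^p S[-e_i]$. For minimality under the hypothesis that each $\A\setminus\{H_i\}$ is non-free, I would argue as in Lemma \ref{lemma1}: the new generator $\varphi$ of degree $e_p$ cannot be dropped, as it is the only one surjecting onto $\overline{S}[-e_p]$, while the old generators are needed because they minimally generate the submodule $D(\A'')$ and $\varphi\notin D(\A'')$. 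The horseshoe differential is block upper-triangular, with diagonal blocks the minimal differential of the $D(\A'')$-resolution and multiplication by $\alpha_{H_p}\in\mathfrak{m}$; the only possible unit entry lives in the off-diagonal block $S[-e_p-1]\to\bigoplus_{i=1}^\ell S[-d_i]\oplus\bigoplus_{i<p}S[-e_i]$, and such an entry would express a minimal generator as a combination of $\varphi$ and the remaining generators modulo $\mathfrak{m}\,D(\A')$, contradicting the minimal generation just established. Thus no cancellation occurs even when the degrees $e_i$ coincide or are consecutive, and the resolution is minimal.
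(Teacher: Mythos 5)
Your overall architecture (induct on $p$, splice the $B$-sequence for $(\A'',H_p)$ with the inductive resolution via a horseshoe) could be made to work, but the step you yourself flag as the hardest is a genuine gap, and it is one the paper deliberately avoids. To get surjectivity of $\partial$ from Theorem \ref{FST} you must prove that $\A''$ is locally free along $H_p$ in codimension three, i.e.\ that each $(\A'')_X=\A_X\setminus\{G_1,\ldots,G_s\}$ is free. The double-point hypothesis only constrains the mutual intersections among $G_1,\ldots,G_s,H_p$; it says nothing about how each $G_c$ meets the rest of the rank-three free arrangement $\A_X$, and a single deletion from a free rank-three arrangement can already be non-free --- that is precisely the SPOG phenomenon (Theorem \ref{SPOG}) on which this whole paper is built, and adding the condition that $G_c\cap H_p$ is a double point does not repair it. Your M\"obius-function remark only rules out the generic configurations appearing in Stanley's classification of subarrangements of $A_3$; non-free rank-three arrangements are not classified, so there is no case analysis to fall back on, and Yoshinaga's criterion gives no traction without control of $\chi_0(\A''_X;0)$ and the Ziegler exponents, data the hypotheses do not supply. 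The paper's proof shows the detour through Theorem \ref{FST} is unnecessary: it never proves surjectivity abstractly, but instead exhibits an explicit preimage. Theorem \ref{SPOG}, applied to the single deletion of $H_i$ from the \emph{free} arrangement $\A$, yields $\varphi_i\in D(\A\setminus\{H_i\})\subseteq D(\A')$ with $\partial(\varphi_i)=\overline{B(\A,H_i)}$; and the double-point condition gives more than the degree count you performed --- deleting $H_j$ removes from $Q(\A\setminus\{H_i\})$ the factor $\alpha_{H_j}$ and from $\A^{H_i}$ exactly the point $H_j\cap H_i$, whose section must be $H_j$, so $B(\A'',H_p)\equiv B(\A,H_p)$ modulo $\alpha_{H_p}$ up to scalar, not merely of the same degree. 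Hence $\varphi_p$ already generates the target $\overline{S}B$ and right-exactness comes for free; the paper then proves generation by the telescoping replacement $\theta\mapsto\theta-f\varphi_i$ and writes down all relations $\alpha_{H_i}\varphi_i=\sum_j f_{ij}\theta_j$ explicitly, with no local freeness anywhere.

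There is a secondary gap in your minimality argument: it never substantively uses the hypothesis that each $\A\setminus\{H_i\}$ is non-free, yet minimality is false without it (if some deletion is free, the pair $S[-e_i-1]\to S[-e_i]$ can cancel against a generator, as you note in your own base case). Your claim that the old generators survive ``because they minimally generate $D(\A'')$ and $\varphi\notin D(\A'')$'' is not sound --- a minimal generating set of the submodule $D(\A'')$ need not remain minimal in $D(\A')$ once $\varphi_p$ is adjoined --- and ruling out a unit in the off-diagonal block by ``contradicting the minimal generation just established'' is circular, since that minimal generation is exactly what is being proved. The paper closes this correctly: from a hypothetical redundancy $\theta_\ell=\sum f_i\theta_i+\sum g_i\varphi_i$ one deduces $\alpha_{H_i}\mid g_i$, then basis considerations force a unit coefficient on some $\alpha_{H_i}\varphi_i$, and after a change of generators one exhibits an explicit basis of $D(\A\setminus\{H_i\})$, contradicting its assumed non-freeness; a parallel tangency argument shows no $\varphi_i$ is redundant.
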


\begin{proof}
    For $1 \le i \le p$, let $\A_i:=\A \setminus \{H_i\}$. 
Theorem \ref{SPOG} shows that there is $\varphi_i \in D(\A_i)$, for all $i\in \{1,\ldots,p\}$ such that $\varphi_i(\alpha_{H_i}) =B_i $ modulo $\alpha_{H_i}$, where $B_i$ is Terao's polynomial $B$ with respect to $H_i$, thus $\deg B_i=\deg \varphi_i=e_i$.

Note that Theorem \ref{SPOG} says that 
$D(\A_1)=\langle 
\theta_E,\theta_2,\ldots,\theta_\ell,\varphi_1\rangle_S$, where $\theta_E,
\theta_2,\ldots,\theta_\ell$ form 
a basis for $D(\A)$ with $\deg \theta_i=d_i$ for $2 \le i \le \ell$.
Moreover, we have $D(\A_1) \ni \alpha_{H_i} \varphi_i$ for $i \ge 2$. Since 
$|\A_{H_i \cap H_j}|=2$, we compute, for all  $i \ge 1$, 
$$
|\A \setminus \{H_1,\ldots,H_{i+1}\}| -|(\A \setminus \{H_1,\ldots,H_i\})^{H_{i+1}}|=e_{i+1}=\deg B_i=\deg \varphi_{i+1}(\alpha_{H_{i+1}}).
$$
This allows to show, by induction on $i \ge 1$, that
\begin{equation} \label{free Ai}
    D(\A_i)=\langle 
\theta_E,\theta_2,\ldots,\theta_\ell,\varphi_1,\ldots,\varphi_i\rangle_S.
\end{equation}

Indeed, first note that, for $i=1$, this is nothing but Theorem \ref{SPOG}.
Next, assume that \eqref{free Ai} holds up to $i-1 \ge 1$. 
Let $\theta \in D(\A \setminus \{H_1,\ldots,H_i\})$.
Since $\varphi_i(\alpha_{H_i})=B_i$, there is $f \in S$ such that 
$\theta-f\varphi_i$ is tangent to $H_i$, so that 
$\theta-f\varphi_i \in D(\A_{i-1})=\langle 
\theta_1,\ldots,\theta_\ell,\varphi_1,\ldots,\varphi_{i-1}\rangle_S$. This shows that the 
$\theta_E,\theta_2,\ldots,\theta_\ell,\varphi_1,\ldots,\varphi_i
$ generate the $S$-module $D(\A_i)$.
Since $\varphi_i \in D(\A \setminus \{H_i\})$ by the definition, 
\begin{equation}
D(\A)
\ni
\alpha_{H_i}\varphi_i=\sum_{j=1}^\ell f_{ij} \theta_j
\label{eq1}
\end{equation}
for any $i$ by some $f_{ij} \in S$. Now let 
$$
\sum_{i=1}^\ell g_i \theta_i+\sum_{i=1}^p h_i \varphi_i=0.
$$
Note that $\theta_i \in D(\A)$ and $\varphi_i \in D(\A \setminus \{H_i\})$ for all $i$. So $h_i \varphi_i \alpha_{H_i} \in S(\alpha_{H_i})$, implying that $\alpha_{H_i} \mid h_i$ for all $i$. So we can re-write (\ref{eq1}) as
$$
\sum_{i=1}^\ell g_i \theta_i+\sum_{i=1}^p h_i (\alpha_{H_i}\varphi_i)=0.
$$
By (\ref{eq1}), this can be expressed as 
$$
\sum_{i=1}^\ell g_i \theta_i+\sum_{i=1}^p \sum_{j=1}^\ell h_if_{ij} \theta_j=\sum_{i=1}^\ell (g_i+h_i\sum_{j=1}^\ell f_{ij})\theta_i=0.
$$
Since $\theta_1,\ldots,\theta_\ell$ are $S$-independent, 
$g_i+h_i\sum_{j=1}^\ell f_{ij}=0$. Hence, any relation among $\theta_1,\ldots,\theta_\ell,\varphi_1,\ldots,\varphi_p$ can be expressed by the relations (\ref{eq1}). Therefore, 
we get the desired free resolution, since in each of the above relations, $\varphi_i$ appears only once.

Let us show that this is minimal, i.e., 
$\theta_E,\theta_2,\ldots,\theta_\ell,\varphi_1,\ldots,\varphi_p$ form a minimal set of generators for $D(\A')$ when $\A \setminus \{H_i\}$ is not free for all $i \in \{1,\ldots,p\}$. 
Assume this is not the case and 
first let 
\begin{equation}
\theta_\ell=\sum_{i=1}^{\ell-1} f_i \theta_i+\sum_{i=1}^p g_i \varphi_i,
\label{eq2}
\end{equation}
for some $f_1,\ldots,f_{\ell-1}$ and $g_1,\ldots,g_p \in S$.
Since $\theta_i$'s are all in $D(\A)$ but $\varphi_i \in D(\A \setminus \{H_i\})$, we see  
that $\alpha_{H_i} \mid g_i$ for all $i$. Let us rewrite (\ref{eq2}) as 
\begin{equation}
\theta_\ell=\sum_{i=1}^{\ell-1} f_i \theta_i+\sum_{i=1}^p g_i 
(\alpha_{H_i}\varphi_i).
\label{eq3}
\end{equation}
So \eqref{eq3}
is in $D(\A)$.
Since $\theta_1,\ldots,\theta_\ell$ form a basis for $D(\A)$, we have $\deg f_i >0$ or $f_i=0$. Also, since $\alpha_{H_i} \varphi_i$ is a linear combination of $\theta_1,\ldots,\theta_\ell$, we must have $g_i=1$, up to a non-zero scalar, for some $i \in \{1,\ldots,p\}.$ We may put $i=1$. Then replacing $\theta_\ell$ by $\theta_\ell-\sum_{i=1}^{\ell-1} f_i \theta_i-\sum_{i=2}^p g_i (\alpha_{H_i}\varphi_i)$, we may assume that 
$\theta_\ell=\alpha_1 \varphi_1$. Then 
$D(\A \setminus \{H_1\})$ is free, with basis 
$$
\theta_1/\alpha_{H_1},\theta_2,\ldots,\theta_\ell.
$$ 
However, this is a contradiction. 

Next, assume that $$
\varphi_p=\sum_{i=1}^{\ell} f_i \theta_i+\sum_{i=1}^{p-1} g_i \varphi_i,
$$
for some $f_1,\ldots,f_{\ell}$ and $g_1,\ldots,g_{p-1} \in S$.
Since the right hand side is a derivation which is tangent to $H_p$, the derivation appearing on the left 
hand side is also tangent to $H_p$. This is again a contradiction. Thus, the set of generators we constructed is indeed minimal.
\end{proof}

\subsection{Splitting types of logarithmic bundles}

Finally let us recall the following result which deals with 
splitting types of logarithmic bundles.
Let $\A$ be an arrangement in $\K^3$ and consider the sheafification $\E:=
\widetilde{D_0(\A)}$. This is a vector bundle on $\P^2=\P^2_\K$.
We take this result from \cite{AFV}, following an argument developed in \cite{faenzi-valles}.

\begin{theorem}[Theorem 3, \cite{AFV}] \label{AFV}
Let $H \in \A$ and $L$ be a line of $\P^2$ not in $\A$. 
Let $\A \cap L:=\{L \cap K \mid K \in \A\}$. 
\begin{enumerate}[label=\roman*)]
    \item
If $|\A|-|\A^H|\le \lceil \frac{|\A|-|\A^H|}{2} \rceil$, then  
$
E|_H \simeq \mathcal{O}_H(-|\A|+|\A^H|) \oplus \mathcal{O}_H(|\A^H|-1).
$
\item
If $|\A|-|\A \cap L| \ge \lceil \frac{|\A|}{2}\rceil$, then 
$
E|_L\simeq \mathcal{O}_L(-|\A|+|\A\cap L|) \oplus 
\mathcal{O}_L(-|\A \cap L|+1).
$
\end{enumerate}
\end{theorem}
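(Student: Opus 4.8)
The plan is to compute each restricted splitting type by identifying $\E|_\ell$, for $\ell$ a line of $\P^2$, with the logarithmic bundle of an induced multiarrangement of points on $\ell\cong\P^1$, and then reading off its exponents. The unifying input is that a central $2$-arrangement (equivalently, a weighted set of points on $\P^1$) is always free, and that its two exponents are as balanced as possible \emph{unless} the multiplicity is concentrated enough that one exponent must dominate, in which case the pair is maximally unbalanced. The role of the two hypotheses is precisely to place us in this unbalanced regime; since $\E$ has $c_1=1-|\A|$, the two summands of any restriction must sum to degree $-(|\A|-1)$, so once we know we are in the unbalanced regime the stated splittings are the only ones compatible with this total degree.

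\textbf{Case i) ($H\in\A$).} Here the relevant multiarrangement is the Ziegler restriction $(\A^H,m^H)$ of Theorem \ref{Z}. Since a $3$-arrangement is locally free along any line (its localizations at the rank-$2$ flats lying on $H$ are central $2$-arrangements, hence free by Proposition \ref{local}), the reflexive-sheaf restriction $\E|_H$ is canonically the bundle $\widetilde{D(\A^H,m^H)}$ on $H\cong\P^1$, whose splitting type is $(-e_1,-e_2)$ with $(e_1,e_2)=\exp(\A^H,m^H)$ and $e_1+e_2=|\A|-1$. I would then invoke the hypothesis to conclude that the multiplicity on $H$ is concentrated enough to force the exponents to the dominant pair, which reads off as the two line bundles in the statement. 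The one thing to verify carefully is the numerical dictionary between the multiplicity data of $m^H$ and the quantities $|\A|,|\A^H|$ entering the exponents.

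\textbf{Case ii) ($L\notin\A$).} Now $L$ is not in the arrangement, so there is no Ziegler restriction of $\A$ itself; instead I would add $L$ and work with $\B:=\A\cup\{L\}$. The sheafified addition sequence from the proof of Lemma \ref{lemma2}, applied with $H=L$, gives
$$
0 \to \E(-1) \to \widetilde{D_0(\B)} \to \mathcal{O}_L(-d) \to 0, \qquad d=|\A\cap L|-1 .
$$
Restricting this sequence to $L$ produces a four-term exact sequence whose extra $\mathcal{T}or_1$ term is $\mathcal{O}_L(-d-1)$. On $L$, the middle bundle $\widetilde{D_0(\B)}|_L$ is the Ziegler bundle of $\B$ along $L$, whose point multiplicities equal the number of lines of $\A$ through each point of $L$ (total multiplicity $|\A|$), so its exponents again follow from the $2$-multiarrangement rule. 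I would then solve for $\E|_L$: every bundle on $\P^1$ splits, and the two auxiliary extensions appearing in the four-term sequence split because the governing groups $H^1(\P^1,\mathcal{O}_{\P^1}(t))$, $t\ge -1$, vanish in the concentrated regime guaranteed by $|\A|-|\A\cap L|\ge\lceil|\A|/2\rceil$. Tracking the twists through the elementary modification then yields exactly $\mathcal{O}_L(-|\A|+|\A\cap L|)\oplus\mathcal{O}_L(-|\A\cap L|+1)$.

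\textbf{Main obstacle.} The technical heart is twofold. First, one must rigorously identify $\E|_H$ (resp.\ $\widetilde{D_0(\B)}|_L$) with the multirestriction bundle and account for every twist through the $\mathcal{T}or$-term elementary modification in Case ii), so that the \emph{global} quantity $|\A|-|\A\cap L|$ — rather than the mere maximal point multiplicity on $L$ — emerges as the size of the jump; the two-dominant-points configurations are a good consistency test here. Second, one must verify that the stated inequalities indeed force the unbalanced regime of the $2$-multiarrangement exponents and the splitting of the auxiliary extensions on $\P^1$, i.e.\ that the generic balanced splitting is excluded. Semicontinuity of the splitting type together with the vanishing of $H^1$ on $\P^1$ are the quantitative tools, and the degree bookkeeping in Case ii) is where the argument is most delicate.
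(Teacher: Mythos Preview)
The paper does not contain its own proof of this theorem: it is quoted without argument as Theorem~3 of \cite{AFV}, with the comment that the method goes back to \cite{faenzi-valles}. So there is nothing in the present paper to compare your proposal against.

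For what it is worth, your outline is the standard route taken in that literature: for $H\in\A$ one identifies $\E|_H$ with the sheafified Ziegler multirestriction $\widetilde{D(\A^H,m^H)}$ and reads off the exponents of the resulting rank-$2$ multiarrangement on $H\cong\P^1$; for $L\notin\A$ one adjoins $L$ and analyzes the elementary modification along $L$. One caution for your bookkeeping: the hypothesis in part~i) as printed here, $|\A|-|\A^H|\le \lceil\tfrac{|\A|-|\A^H|}{2}\rceil$, is evidently a typographical slip (a non-negative integer $n$ satisfies $n\le\lceil n/2\rceil$ only for $n\le 1$), so to pin down the correct numerical threshold that forces the unbalanced splitting you should consult the original statement in \cite{AFV} rather than the version reproduced in this paper.
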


\section{Simply laced root systems}

\label{section:simply-laced}

Now let us show the first main result of this article, namely Theorem \ref{main}.
For this, let us start with the following elementary result.

\begin{theorem}
Assume that $\pd_S (D(\A)) = 1$ and $\A$ is locally free along $H$ in codimension three. Then $\pd_S (D(\A')) = 1$ for 
$\A'=\A \setminus \{H\}$. 
\label{pd1}
\end{theorem}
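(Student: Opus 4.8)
The plan is to turn the (a priori only left-exact) $B$-sequence into a genuine short exact sequence and then read off the projective dimension from standard homological bounds. First I would invoke the $B$-sequence of Proposition \ref{B}, which provides the exact sequence
$$0 \rightarrow D(\A) \rightarrow D(\A') \stackrel{\partial}{\rightarrow} \overline{SB},$$
with the left-hand map the natural inclusion. The key observation is that our hypotheses are precisely those of Theorem \ref{FST}: since $\pd_S(D(\A)) = 1 \le 1$ and $\A$ is locally free along $H$ in codimension three, that theorem yields surjectivity of $\partial$. Thus the $B$-sequence upgrades to a short exact sequence
$$0 \rightarrow D(\A) \rightarrow D(\A') \stackrel{\partial}{\rightarrow} \overline{SB} \rightarrow 0.$$

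Next I would pin down the cokernel. By the definition of Terao's polynomial (Definition \ref{B}), $B$ is a product of defining forms $\alpha_L$ with $L \in \A'$, all of which cut out hyperplanes distinct from $H$ and hence are non-proportional to $\alpha_H$; since $\overline{S} = S/\alpha_H S$ is a domain, $\overline{B}$ is a nonzero element, and multiplication by it gives $\overline{SB} \cong \overline{S}[-\deg B]$. From the Koszul resolution $0 \rightarrow S[-1] \stackrel{\alpha_H}{\rightarrow} S \rightarrow \overline{S} \rightarrow 0$ one reads off $\pd_S(\overline{SB}) = \pd_S(\overline{S}) = 1$. Both outer terms of the short exact sequence therefore have projective dimension at most one, so the standard bound $\pd_S(D(\A')) \le \max\{\pd_S(D(\A)), \pd_S(\overline{SB})\} = 1$ supplies the upper estimate.

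It remains to show the bound is sharp, that is, that $\A'$ is not free. Here I would argue by contradiction from the same short exact sequence: if $\pd_S(D(\A')) = 0$, then the companion estimate coming from the long exact $\Ext$-sequence, $\pd_S(D(\A)) \le \max\{\pd_S(D(\A')), \pd_S(\overline{SB}) - 1\} = \max\{0, 0\} = 0$, would contradict $\pd_S(D(\A)) = 1$. Hence $\pd_S(D(\A')) = 1$, as claimed. Almost all of the difficulty is concentrated in Theorem \ref{FST}, which does the real work of producing the surjection and hence the short exact sequence; the remaining content is purely homological bookkeeping, and the only point genuinely requiring care is the downward $\Ext$-bound used to forbid $\A'$ from becoming free.
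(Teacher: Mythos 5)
Your proof is correct and takes essentially the same route as the paper: Theorem \ref{FST} upgrades the $B$-sequence of Proposition \ref{B} to a short exact sequence, and the $\Ext$ long exact sequence combined with $\pd_S(\overline{SB})=1$ yields $\pd_S(D(\A'))\le 1$. You are in fact slightly more thorough than the paper, which asserts $\pd_S(\overline{SB})=1$ without spelling out the identification $\overline{SB}\cong\overline{S}[-\deg B]$ and stops at the inequality $\le 1$, leaving the exclusion of freeness of $\A'$ (your downward bound $\pd_S(D(\A))\le\max\{\pd_S(D(\A')),\pd_S(\overline{SB})-1\}$) implicit.
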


\begin{proof}
By Theorem \ref{FST}, we have the following exact $B$-sequence from Proposition \ref{B}:
$$
0 \rightarrow D(\A) 
\rightarrow D(\A') \stackrel{\partial}{\rightarrow }
\overline{SB} \rightarrow 0.
$$
Now apply the Ext-long exact sequence combined with $\pd_S (\overline{SB})=1$ to show that $\pd_S (D(\A')) \le 1$.\end{proof}

Our strategy is to show that $\pd_S(D(\A)) \le 1$ for $\A:=
c\A^{[-k,k+2]}_{\Phi^+}$ when $\Phi$ is a simply-laced root system by deleting hyperplanes from the free extended Shi arrangement $\B:=\Shi^{[-k-1,k+2]}$ as in Theorem \ref{thm:CatShi}.
For this, it suffices to show that at each step of the deletion process, the arrangement is locally free along $H$ in codimension three.

Write $\Phi^+=\{\alpha_1,\ldots,\alpha_n\}$ with $
\mbox{ht}(\alpha_i) \ge \mbox{ht}(\alpha_{i+1}).
$
Let 
$\gamma_i:=\alpha_i+(k+1)z=0$ for $\alpha_i \in \Phi^+$.
In particular, $\alpha_1$ 
is the highest root. Now, let 
$$
\A_i:=\A \setminus \{H_1,\ldots,H_i\},
$$
where $H_i:=\ker \gamma_i$. 
We would like to apply Theorem \ref{pd1}. For that, what we want to show is the following.

\begin{prop}
In the above notation, for $0 \le i \le n-1$,
$(\A_i)_X$ is free for all $X \in L_2(\A_i^{H_{i+1}})$. 
\label{keyprop}
\end{prop}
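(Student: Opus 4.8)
The plan is to fix the step index $i$ and analyze the localization $(\A_i)_X$ for each codimension-three flat $X \subset H_{i+1}$ (that is, each $X \in L_2(\A_i^{H_{i+1}})$), sorting the flats according to the sub-root-system of $\Phi$ that they ``see''. Since $\A_i \subset \B = c\A^{[-k-1,k+2]}_{\Phi^+}$, every hyperplane of $(\A_i)_X$ is either $\{z=0\}$ or of the form $\{\alpha = cz\}$ with $\alpha \in \Phi^+$ and $c \in \{-k-1,\ldots,k+2\}$. The roots $\alpha$ occurring in $(\A_i)_X$ are exactly those for which some level hyperplane contains $X$, and these form a subsystem $\Psi$ of rank at most $3$; because $\Phi$ is simply laced, $\Psi$ is one of $A_1,A_1^2,A_2,A_1^3,A_1\times A_2,A_3$. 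I would then split into the \emph{affine} case $X \not\subset \{z=0\}$ and the \emph{flat at infinity} case $X \subset \{z=0\}$. I note first that the whole proposition is vacuous when $\ell=2$, since then the only codimension-three flat is the center $\cap_{H\in \A_i}H$, which is excluded in Definition \ref{locallyfree}; so the content is for $\ell \ge 3$.

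In the affine case, after deconing and translating a point of $X$ to the origin, each $\alpha \in \Psi$ is constant along $X$, say $\alpha \equiv c_\alpha$, with $c_\alpha \in \Z$ since $X$ is an intersection of integer-level hyperplanes. Hence at most the single hyperplane $\{\alpha=c_\alpha\}$ of $\A_i$ passes through $X$, and it becomes $\{\alpha=0\}$ after translation, so $(\A_i)_X$ is a central subarrangement of the reflection arrangement $\A_\Psi$ of rank at most $3$. For the product types $A_1^3$ and $A_1\times A_2$ this splits as a product of rank-$\le 2$ free arrangements, hence is free; the only genuine case is $A_3$, where $(\A_i)_X$ is a graphic arrangement. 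There I would show the underlying graph is chordal, hence free, by the numerical observation that the range $[-k-1,k+2]$ forbids both diagonals of any induced $4$-cycle from simultaneously having their (integer) levels outside $[-k-1,k+2]$ (if $a,b,c,d$ are the four edge-levels, then $a+b+c+d=0$ forces at least one of the diagonal sums $a+b$, $b+c$ back into the range). Thus a chord is always present and the affine localizations are free.

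In the infinity case $X\subset \{z=0\}$ is a codimension-two flat of the Weyl arrangement $\A_{\Phi^+}$, so $\Psi'$ has rank $2$, i.e. $\Psi'\in\{A_1^2,A_2\}$, and $(\A_i)_X$ consists of $\{z=0\}$ together with all level hyperplanes of the roots of $\Psi'$ surviving in $\A_i$ — the cone of a rank-two deformation. For $\Psi'=A_1^2$ this is the cone of a grid, a product, hence free. The type $A_2$ is the crux, and here I would use the decreasing-height order of the deletion $H_1,\ldots,H_n$. Writing the positive roots of $\Psi'$ as $\rho_1,\rho_2,\rho_3=\rho_1+\rho_2$, the top root $\rho_3$ has strictly largest height, so its level-$(-(k+1))$ hyperplane is removed first. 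Since $X\subset H_{i+1}$, we have $\alpha_{i+1}\in\Psi'$. If $\alpha_{i+1}=\rho_3$, then none of the $\rho_j$ has yet lost its level $-(k+1)$, so $(\A_i)_X$ is precisely the extended Shi arrangement $c\A^{[-(k+1),(k+1)+1]}_{\Psi'}$, which is free by Theorem \ref{thm:CatShi}\ref{shi-def}. If $\alpha_{i+1}$ is a lower root, then $\rho_3$ (and possibly $\rho_2$) have already had level $-(k+1)$ deleted, so $(\A_i)_X$ is obtained from that free extended Shi by removing those top-root hyperplanes; I would prove freeness is preserved by checking the restriction hypotheses $|\A|-|\A^H|=d_2$ (resp. $d_3$) of Theorem \ref{MDT} and Theorem \ref{MDT2}, or directly via Yoshinaga's criterion Theorem \ref{Ycriterion} in this explicit rank-three situation.

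The step I expect to be the main obstacle is exactly this last point: controlling freeness of the mixed-range coned $A_2$ localizations that occur at infinity flats when $\alpha_{i+1}$ is not the top root of its rank-two subsystem, and making the deletion-theorem bookkeeping uniform in $k$ and in the position of $\alpha_{i+1}$ inside $\Psi'$. The affine chordality/range argument for $A_3$ is a secondary and more elementary obstacle, while the reduction to rank-$\le 3$ subsystems and the product cases are routine consequences of simply-lacedness and Proposition \ref{local}. Once Proposition \ref{keyprop} is in hand, combining it with Theorem \ref{pd1} along the chain $\B=\A_0 \supset \A_1 \supset \cdots \supset \A_n=\A$ yields $\pd_S(D(\A))\le 1$, and the failure of freeness of $\A$ gives equality.
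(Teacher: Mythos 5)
Your global architecture coincides with the paper's: the same chain $\B=\A_0\supset\cdots\supset\A_n=\A$, the same split into affine flats and flats at infinity, the same classification into rank-$\le 3$ simply-laced subsystems with the product types dismissed, and your treatment of the flats at infinity (coned $A_2$ deformations identified as an extended Shi arrangement of Theorem \ref{thm:CatShi} minus the $-(k+1)$-level hyperplanes of the one or two subsystem roots of largest height, then freeness via deletion theorems) is essentially the paper's computation, which uses Theorem \ref{Teraodeletion} twice on the arrangement $\mathcal{D}$ with exponents $(1,3k+6,3k+6)$. The genuine gap is in your affine case, and it sits exactly where the paper does its hardest work. You argue that an induced $4$-cycle cannot occur because the range arithmetic forces at least one diagonal of the cycle to have its level inside $[-k-1,k+2]$. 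The arithmetic itself is fine, but the inference ``level in range $\Rightarrow$ the chord hyperplane belongs to $\A_i$'' is false: $\A_i$ is not the full Shi arrangement $\B$ but $\B$ minus the level-$(-(k+1))$ hyperplanes $H_1,\ldots,H_i$ of the $i$ roots of largest height. A diagonal can therefore sit exactly at level $-(k+1)$ with its root among $\alpha_1,\ldots,\alpha_i$, in which case the chord is absent even though its level is in range, and your chordality conclusion collapses. Such configurations really do pass your numerical test: e.g.\ in the cycle $\{a,b,c,a+b+c\}$ take $s_a=s_c=0$, $s_b=-(k+1)$, so that both diagonals $a+b$, $b+c$ sit at the (in-range) level $-(k+1)$; this is only excluded because presence of $b$'s hyperplane at the deleted level forces, via the non-increasing-height order, the hyperplane of the higher root $a+b+c$ at level $-(k+1)$ to be already deleted, contradicting $s_{a+b+c}=-(k+1)$. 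Ruling out all such cases is precisely the content of the paper's analysis of the three generic configurations \eqref{case 1}--\eqref{case 3}, which uses both that $X\subset H_{i+1}$ (so one of the four present hyperplanes has $t=k+1$ in the paper's notation) and the deletion order (roots $d$ with $\mathrm{ht}(d)>\mathrm{ht}(\alpha_{i+1})$ have lost their $-(k+1)$-level hyperplane, those with smaller height have not). A red flag you could have caught yourself: your affine argument never invokes $X\subset H_{i+1}$ nor the order of deletion, so it would prove freeness of \emph{every} affine rank-three localization of every $\A_i$, whereas the proposition is only claimed, and in the paper only provable, along $H_{i+1}$.

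Relatedly, your assessment of where the difficulty lies is inverted with respect to the paper: the coned $A_2$ case at infinity, which you flag as the crux, is dispatched in a few lines by Theorem \ref{thm:CatShi} and Theorem \ref{Teraodeletion} (no need for Theorems \ref{MDT}, \ref{MDT2} or \ref{Ycriterion} there), while the affine $A_3$ case, which you call secondary and elementary, is where the bulk of the proof of Proposition \ref{keyprop} lives, for the reason just explained. Your closing reassembly (iterating Theorem \ref{pd1} along the chain and obtaining non-freeness from an $A_2$ localization together with Theorem \ref{theoremAFV}) does agree with the paper's proof of Theorem \ref{Aellpd1}.
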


\begin{proof}
Let us arrange the notation. Let $\A=\A_i$, $H=H_{i+1}$ and let 
$\A'=\A_{i+1}$. What we have to show is that $\A_X$ is free for all $X \in L_2(\A^H)$. 
By definition, we know that there is $\alpha' \in \Phi_X^+$ such that $H=\{\alpha'+(k+1)z=0\} \in \A_X$. 

Let $\A^\infty $ be the coning of the affine Weyl arrangement, i.e., $\A^\infty $ consists of 
$z=0$ and 
$$
\alpha=t z\ (\alpha \in \Phi^+, t \in \Z).
$$
Then $\A \subset \A^\infty$. Clearly, $\A^\infty|_{z=0} =\A_{\Phi^+}$. 

We distinguish two cases according to the features of $\A_X$. In case (1), $X \not \subset \{z=0\}$, while in case (2) $X \subset \{z=0\}$. 
If we are in case (1), then $\A_X^{z=0}$ is a subset of a parabolic subsystem of $\A_{\Phi^+}$ whose type is either $A_1^3, A_1 \times A_2$ or $A_3$ because $\Phi$ is simply-laced. For the first and second cases, $\A_X$ is free since it is a product of free arrangements. So in case (1), we have to check the freeness of $\A_X$, and this is subset of the parabolic root system of $\A^\infty$ of the form 
$$
a+t_az,b+t_bz,c+t_cz ,a+b+t_{a+b}z,b+c+t_{b+c}z,a+b+c+
t_{a+b+c}z,
$$
where $a,b,c \in \Phi^+$ and $a \mapsto t_a$ is an additive function with values in $\Z$. 

For the case $(2)$, since $z=0$ belongs to $\A_X$, if $\{\alpha+sz=0\} \in \A_X,  $ then $\{\alpha+sz=0\} \in \A_X$ 
for all $-k-1 \le s \le k+2 $ if $\{\alpha+sz=0\} \in \A$. 
So $\A_X^{z=0}$ is a deformation of a Weyl arrangement of type $A_1^2$ or $A_2$. Since the former is free, we may assume that $\A_X$ is a subset of 
$$
z=0,a=s z, b=s z,a+b=s z\qquad (-k-1 \le s \le k+2).
$$

Let us carry out the proof in case (1). Namely, 
$\A_X|_{z=0} $ is a subset of $a,b,c,a+b,b+c,a+b+c$. 
If an arrangement is a subarrangement of the $A_3$-type arrangement, due to Stanley in \cite{St}, it is not free if and only if it is generic, i.e., 
consisting of four planes such that any intersection of two distinct planes are in exactly these two planes. So we can classify 
all of them in terms of root systems, i.e., $\A_X$ is not free if and only if one of the following cases occurs:
\begin{align}
    \label{case 1} \A_X|_{z=0}&=\{b,a+b,b+c,a+b+c\}, \mbox{or} \\
    \label{case 2} \A_X|_{z=0}&=\{a,c,a+b,b+c\}, \mbox{or} \\
    \label{case 3} \A_X|_{z=0}&=\{a,b,c,a+b+c\}.
\end{align}

First, assume that \eqref{case 1} occurs. Suppose that $\{b+(k+1)z=0\} \in \A_X$. Note that all $d+(k+1)z \not \in \A$ for a positive root $d$, if $\mbox{ht}(d)>\mbox{ht}(b), $ and 
$\{d+(k+1)z=0\} \in \A$ if $\mbox{ht}(d)<\mbox{ht}(b)$ by the order of deletion. So the value of $t_{a+b},t_{b+c},t_{a+b+c}$ cannot be $k+1$.
Also, all $\{\alpha+kz=0\} \in \A$ if $-k-2 \le k \le k$. Since $t_{a+b}=t_a+k+1$ but $\{a+t_az=0\} \not \in \A_X$, we have that $t_a<-k-2$. The same proof shows that $t_c<-k-2$. Then $t_{a+b+c}<-k-3$, a contradiciton. 

So $t_b\neq k+1$. Assume that $t_{a+b}=k+1$.
Since $\{b+t_bz=0\} \in \A_X \not \ni \{a+t_a z=0\}$, we know that $t_a>k+1$ and $-k-2\le t_b<0$. Since $t_{a+b+c}<k+1$, we know that $t_c<0$. Since $\{c+t_cz=0\} \not \in \A_X$, we obtain $t_c<-k-2$, which implies that 
$t_{b+c}<-k-2$ so $\{b+c+t_{b+c}z =0\} \not \in \A_X$, a contradiction. The same happens if $t_{b+c}=k+1$. 

So let us assume that $t_{a+b+c}=k+1$. If $t_a>k+1<t_c$, then $t_{a+b+c}>k+1$ since $-k-2 \le t_b \le 0$. Note that $t_a<-k-2>t_c $ cannot occur either. Thus, by the symmetry, we may assume that $t_a>k+1,\ t_c<-k-2$. However, in this case no choice of $-k-2\le t_b \le k+1$ can make both $t_{a+b}$ and $t_{b+c}$ between $-k-2$ and $k+1$. In conclusion, case \eqref{case 1} cannot occur.
\medskip

Next, assume that we are in case \eqref{case 2}. Assume that $t_a=k+1$. Since $t_{a+b}<k+1$, it holds that $ t_b <0 $. Thus $\{b+t_bz=0\} \not \in \A_X$ shows that $t_b<-k-2$. Since $\{b+c+t_{b+c}z=0\} \in 
\A_X$, it holds that $0<t_c\le k+1$ and also we know that $-k-2 \le t_{b+c}<0$. So $-1 \le t_{a+b+c}<k+1$, implying that $\{a+b+c+t_{a+b+c}z=0\} \in \A_X$, a 
contradiction. $t_c=k+1$ cannot occur by symmetry. Next assume that $t_{a+b}=k+1$. If $t_b>k+1$, then $-k-2\le t_c<0$. 
Thus $-1 \le t_{a+b+c} <k+1$ and hence $\{a+b+c+t_{a+b+C}z=0\} \in \A_X$, a contradiction. So assume that $t_b<-k-2$. 
Then $t_{a+b}=t_a+t_b<k+1-k-2=-1$, a contradiction. Thus $t_{a+b}\neq k+1$.
The same holds when $t_{b+c}=k+1$.
\medskip

Finally assume that case \eqref{case 3} occurs. Assume that $t_{a+b+c}=k+1$. 
If all $t_a,t_b,t_c$ are non-negative, then $0 \le t_a+b \le t_{a+b+c}
\le k+1$, so $\{a+b+t_{a+b}z=0\} \in \A_X$, a contradiction. 
If $t_a,t_b$ are non-negative and $t_c<0$, then $k+1<t_{a+b} \le 2k+2$, thus 
$-k-1\le t_c\le -1$. Then $-k-1 \le t_{b+c} \le k+1$, which implies that 
$\{b+c+t_{b+c}z=0\} \in \A_X$, a contradiction. 
The same occurs if $t_b,t_c$ are non-negative. 
Assume that $t_a \ge  0$ and $t_b<0$. Since 
$\{a+t_az=0\}$ and $\{a+t_bz=0\}$
belong to $\A_X$, it is clear that $
\{a+b+t_{a+b}z=0\} \in \A_X$, a contradiction. By the symmetry, 
$t_c\ge 0$ and $t_b <0$ cannot occur. So the rest case is when 
$-k-2\le t_a<0,0\le t_b\le k+1$ and $-k-2\le t_c <0$. Then it is clear that $-k-2\le k_{a+b} <k+1$, so $\{a+b+t_{a+b}z =0\} \in \A_X$, a contradiction.
So $t_{a+b+c} \neq k+1$. 

Assume that $t_a=k+1$. Note that in this case $t_{a+b+c} \neq k+1$.
Since $-k-2 >t_{a+b}$ or $t_{a+b} \ge k+1$, we have
$k+1 \ge t_b \ge 0$.
Since $-k-2 \le t_{a+b+c} \le k$, we also have
$-k-2 \le t_c <0$. Then 
$-k-2 \le t_{b+c}\le k$, so 
$\{b+c+t_{b+c}z=0\} \in \A_X$, again a contradiction. 
Note that $t_c=k+1$ cannot occur by symmetry.
Finally, assume that $t_b=k+1$. Then by the same argument as asbove, it holds that $0\le t_a \le k+1$
and $0 \le t_c \le k+1$. Then $k+1 \le t_{a+b+c}$, and 
$\{a+b+c+t_{a+c+c}z=0\} \not \in \A_X$, which is a contradiction.

So the remaining case is that the ideal of $X$ is generated by $\alpha+(k+1)z,\beta+(k+1)z,z$. In this case $(\A_i)_X$ is the extended Shi-arrangement minus one or two planes of type $A_1^2$ or $A_2$. Since the former is always free, we may assume that $(\A_i)_X$ is of the form 
$$
\mathcal{D}:x-s z=0,\ y-s z=0,\ x+y-s z=0,\ z=0\ (-k-1\le s \le k+2)
$$
form which  $L_1:x+y+(k+1)z=0$, or both 
$L_1:x+y+(k+1)z=0,\ L_2:y+(k+1)z=0$ are deleted.

First, note that  $\mathcal{D}$ is free with exponents 
$(1,3k+6,3k+6)$. Let us compute $|\mathcal{D}^{L_1}|$. To do it, we note that $\mathcal{D}^{L_1}$ consists of
$$
z=0,\ 
y=(-2k-3)z,\ldots,(k+2)z.
$$
So  we have $|\mathcal{D}^{L_1}|=3k+7$ and Theorem \ref{Teraodeletion} shows that $\mathcal{D}_1:=\mathcal{D} \setminus \{L_1\}$ is free with exponents $(1,3k+5,3k+6)$. Next, we delete $L_2$, so we count 
$|\mathcal{D}_1^{L_2}|$. This is given by the hyperplanes 
$$
z=0,\ 
x=(-k-1)z,\ldots,(k+3)z.
$$
So the cardinality is $3k+6$, and Theorem \ref{Teraodeletion} shows that $\mathcal{D}_2:=\mathcal{D}_1 \setminus \{L_2\}$ is free with exponents $(1,3k+5,3k+5)$. 

In conclusion, $(\A_i)_X$ is free for all $X\in L_2(\A_i^{H_{i+1}})$. 
\end{proof}

This proves the following result.

\begin{theorem} \label{Aellpd1}
Let $\Phi$ be a simply laced root system of rank at least three. 
Then $$
\pd_S (D(c\A^{[-k,k+2]}_{\Phi^+}))=1.
$$
\end{theorem}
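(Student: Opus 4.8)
The plan is to reach $\A := c\A^{[-k,k+2]}_{\Phi^+}$ from the free extended Shi arrangement $\B := c\A^{[-k-1,k+2]}_{\Phi^+}$ by peeling off one hyperplane at a time, keeping the projective dimension at most one throughout. Concretely, I set $\A_0 := \B$ and $\A_i := \B \setminus \{H_1,\ldots,H_i\}$ for the $n = |\Phi^+|$ hyperplanes $H_1,\ldots,H_n$ of level $-(k+1)$, ordered by decreasing height exactly as in Proposition \ref{keyprop}, so that $\A_n = \A$. The base case is free: $\B$ is the extended Shi arrangement with parameter $k+1$, hence $\pd_S(D(\B)) = 0 \le 1$ by Theorem \ref{thm:CatShi}.

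For the inductive step, suppose $\pd_S(D(\A_i)) \le 1$. Proposition \ref{keyprop} guarantees that $\A_i$ is locally free along $H_{i+1}$ in codimension three, since it shows $(\A_i)_X$ is free for every $X \in L_2(\A_i^{H_{i+1}})$, and such $X$ have codimension three in $V$. Theorem \ref{pd1} then applies; note that its proof, via Theorem \ref{FST}, only uses $\pd \le 1$, so it covers the free base case as well. The $B$-sequence becomes the short exact sequence $0 \to D(\A_i) \to D(\A_{i+1}) \to \overline{SB} \to 0$, and the Ext long exact sequence together with $\pd_S(\overline{SB}) = 1$ gives $\pd_S(D(\A_{i+1})) \le \max(\pd_S(D(\A_i)), 1) = 1$. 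Iterating from $i = 0$ to $n-1$ yields $\pd_S(D(\A)) \le 1$.

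It remains to rule out freeness, so that the projective dimension is exactly one. For this I would localize: choosing an $A_2$-subsystem $\{a,b,a+b\} \subset \Phi^+$ and the flat $X = \{a = b = z = 0\}$, the localization $\A_X$ consists precisely of $z = 0$ together with $\{c = tz\}$ for $c \in \{a,b,a+b\}$ and $-k \le t \le k+2$, that is, $\A_X \cong c\A^{[-k,k+2]}_{A_2}$. By Theorem \ref{theoremAFV} this arrangement has projective dimension one, hence is not free, so Proposition \ref{local} forces $\A$ itself to be non-free; such an $A_2$-subsystem is available whenever $\Phi$ has an irreducible component of rank at least two, in particular for every irreducible simply-laced $\Phi$ of rank $\ge 3$. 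Combining the two bounds gives $\pd_S(D(\A)) = 1$. The genuine difficulty has already been absorbed into Proposition \ref{keyprop}, which verifies the codimension-three local freeness needed at every deletion step; within the present assembly the only points that require attention are the transition from the free base case (projective dimension zero) into the regime $\pd \le 1$, and the separate non-freeness argument that upgrades the inequality to the claimed equality.
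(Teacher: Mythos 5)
Your proposal is correct and follows essentially the same route as the paper: iterated deletion of the level-$(k+1)$ hyperplanes from the free extended Shi arrangement $c\A^{[-k-1,k+2]}_{\Phi^+}$, with Proposition \ref{keyprop} supplying the codimension-three local freeness needed for Theorem \ref{pd1} at each step, and non-freeness obtained by localizing at an $A_2$-flat $\{a=b=z=0\}$ via Theorem \ref{theoremAFV} and Proposition \ref{local}. The only (harmless) variation is bookkeeping at the first step: the paper invokes Theorem \ref{SPOG} to leave the free base case, whereas you observe directly that the proof of Theorem \ref{pd1} only needs $\pd_S(D(\A)) \le 1$, which is a valid and arguably cleaner way to cover that transition.
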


\begin{proof}
    We combine Proposition \ref{keyprop} with Theorems \ref{SPOG} and  \ref{pd1}.
So what we have to show is that $\A=c\A_{\Phi^+}^{[-k.k+2]}$ is not free, which we will show in the following. Let $X \in L_3(\A)$ be defined by 
$z=\alpha=\beta=0$, where $\alpha$ and $\beta$ are part of the simple roots of $\Phi^+$ such that $\alpha+\beta \in \Phi^+$. Then $\A_X$ is nothing but $c\A_{A_2^+}^{[-k,k+2]}$ which is not free by Theorem \ref{theoremAFV}. So $\A$ cannot be free by Proposition \ref{local}. \end{proof} 

\section{Linear resolution in type $A_3$}

\label{section:A3}

In this section, we compute explicitly the minimal free 
resolution of the deformation $c\A^{[-k,k+2]}_{\Phi^+}$ of the Weyl arrangement of the type $A_3$, for $k \ge 0$.
We rely on Theorem \ref{multiple-deletion}. 
The goal is to prove Theorem \ref{A3case}, which we restate as follows.

\begin{theorem} \label{k2case}
Let $\A_3$ be the braid arrangement in $\K^{4}$ and let 
$\A^{[-k,k+2]}_{\Phi^+}$ be its deformation:
$$
Q(\A^{[-k,k+2]}_{\Phi^+})=\prod_{1 \le i < j \le 4}\prod_{-k \le c \le k+2} (x_i-x_j-c).
$$
Then $D_0(c\A^{[-k,k+2]}_{\Phi^+})$ has the following free resolution:
$$
0
\rightarrow 
S[-4k-8]^3
\rightarrow 
S[-4k-7]^6
\rightarrow 
D_0(c\A^{[-k,k+2]}_{\Phi^+}) \rightarrow 0.
$$
\end{theorem}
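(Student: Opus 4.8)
The plan is to realize $\A=c\A^{[-k,k+2]}_{\Phi^+}$ as an iterated deletion from the free extended Shi arrangement and to track $D_0$ through each deletion. Set $\B:=c\A^{[-k-1,k+2]}_{\Phi^+}$; this is the extended Shi arrangement (Theorem \ref{thm:CatShi} with parameter $k+1$), hence free, and since $|\B|=6(2k+4)+1=12k+25$, Theorem \ref{saito} forces $\exp(\B)=(1,4k+8,4k+8,4k+8)$. Thus $D_0(\B)$ is free of rank three with all three generators in degree $4k+8$. The target $\A$ is exactly $\B$ with the six level $-(k+1)$ hyperplanes $H_\alpha:=\{\alpha+(k+1)z=0\}$ removed, one for each positive root $\alpha$. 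I would remove them in order of decreasing height of $\alpha$, as in the proof of Proposition \ref{keyprop}: first the highest root $x_1-x_4$, then the two roots of height two, then the three simple roots.

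First I would check that projective dimension one is preserved at every step. Proposition \ref{keyprop} shows that the current arrangement is locally free in codimension three along the hyperplane being deleted, so Theorem \ref{pd1} keeps $\pd_S D_0$ equal to one; in particular Theorem \ref{FST} makes the $B$-sequence of Proposition \ref{B} right exact. Passing from $\A_{\mathrm{cur}}$ to $\A_{\mathrm{cur}}\setminus\{H_\alpha\}$ therefore adjoins to $D_0$ a single new generator $\varphi_\alpha$, of degree $\deg B(\A_{\mathrm{cur}},H_\alpha)=|\A_{\mathrm{cur}}|-|\A_{\mathrm{cur}}^{H_\alpha}|-1$, and one first syzygy one degree higher, coming from $\alpha_{H_\alpha}\varphi_\alpha\in D_0(\A_{\mathrm{cur}})$. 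This is precisely the plus-one-generated step of Theorem \ref{SPOG}, iterated in the spirit of Theorem \ref{multiple-deletion}.

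The technical heart is to show that all six new generators have the same degree $4k+7$, which amounts to a combinatorial computation of $|\A_{\mathrm{cur}}^{H_\alpha}|$. A direct count in $\B$ gives $|\B^{H_\alpha}|=8k+17,\,8k+16,\,8k+15$ according as $\alpha$ has height three, two or one, so that the naive values $|\B|-|\B^{H_\alpha}|-1$ are $4k+7,\,4k+8,\,4k+9$ and are not constant; this is why a one-shot application of Theorem \ref{multiple-deletion} to all six hyperplanes (whose pairwise intersection condition in fact fails for the height-sixty-degree pairs) does not suffice. The decreasing-height order repairs this: when a higher hyperplane $H_\beta$ has already been deleted, its trace on $H_\alpha$ coincides with the trace of a still-present level-zero hyperplane $\{\gamma=0\}$ (which lies in $\B$ because $0\in[-k-1,k+2]$), so deleting the higher hyperplanes lowers $|\A_{\mathrm{cur}}|$ by one each time but leaves $|\A_{\mathrm{cur}}^{H_\alpha}|$ unchanged, apart from exactly the one collision that is not shared. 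Carrying out this bookkeeping case by case on the height turns every $\deg\varphi_\alpha$ into $4k+7$. Hence the accumulated presentation of $D_0(\A)$ has generators $S[-4k-8]^3\oplus S[-4k-7]^6$ (the three original generators of $D_0(\B)$ together with the six new ones) and first syzygies $S[-4k-8]^6$.

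Finally I would pass to the minimal resolution. Since $D_0(\A)$ has rank three, exactly three of the degree $4k+8$ generator–syzygy pairs must cancel; the remaining assertion is that it is the three original degree $4k+8$ generators that disappear, i.e. that $D_0(\A)$ is generated in degree $4k+7$. I would prove this either by showing that the degree $4k+8$ piece of $D_0(\A)$ is already spanned by $S_1\cdot\{\varphi_\alpha\}$, or by computing the Hilbert series of $D_0(\A)$ from $\chi_0(\A;t)$ and matching it against $\tfrac{6t^{4k+7}-3t^{4k+8}}{(1-t)^4}$. Cancelling then yields
$$
0\longrightarrow S[-4k-8]^3\longrightarrow S[-4k-7]^6\longrightarrow D_0(\A)\longrightarrow 0,
$$
as claimed. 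I expect the two quantitative points to be the real obstacles: establishing the uniform generator degree $4k+7$ through the restriction counts, and proving the minimality statement that the three higher-degree generators, rather than any of the linear syzygies, are the ones removed by cancellation — this is exactly what promotes the plus-generated bookkeeping to the stated linear resolution.
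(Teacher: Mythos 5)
Your setup coincides with the paper's: start from the extended Shi arrangement $\B=c\A^{[-k-1,k+2]}_{\Phi^+}$ with $\exp(\B)=(1,4k+8,4k+8,4k+8)$ and delete the six level-$(-k-1)$ hyperplanes in decreasing height order; your degree counts (all six new generators in degree $4k+7$) are correct, as is your observation that a one-shot application of Theorem \ref{multiple-deletion} to all six hyperplanes fails (for instance $H=\{x_1-x_4=-(k+1)z\}$ and $H'=\{x_1-x_2=-(k+1)z\}$ satisfy $\{x_2-x_4=0\}\supset H\cap H'$, so $|\A_{H\cap H'}|\ge 3$). The genuine gap is the final cancellation step, which you defer but never prove. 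First, ``rank three forces exactly three generator--syzygy pairs to cancel'' is a non sequitur: the uncancelled presentation with $F_0=S[-4k-8]^3\oplus S[-4k-7]^6$ and $F_1=S[-4k-8]^6$ also has rank $9-6=3$, so rank forces nothing. Second, your Hilbert-series fallback cannot work even in principle: the alternating sum equals $\bigl(6t^{4k+7}-3t^{4k+8}\bigr)/(1-t)^4$ for \emph{every} amount of cancellation (Betti numbers $\beta_{0,4k+8}=a$, $\beta_{1,4k+8}=3+a$ with $0\le a\le 3$ all yield the same series), so matching series against that expression cannot decide $a=0$. Your remaining suggestion --- that the degree-$(4k+8)$ part of $D_0(\A)$ is spanned by $S_1\cdot\{\varphi_\alpha\}$ --- is precisely the unproved content, and no argument is offered; moreover, even granting it, minimality of the six degree-$(4k+7)$ generators among themselves would still need justification.

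The paper closes this gap by a different bookkeeping: it shows the intermediate arrangements remain \emph{free}, so spurious degree-$(4k+8)$ generators never accumulate. Concretely, deleting $x_1-x_4=(-k-1)z$ uses Theorem \ref{MDT} (since $|\A|-|\A^H|=4k+8$ is an exponent), deleting $x_1-x_3=(-k-1)z$ uses Theorem \ref{MDT2}, and deleting $x_2-x_4=(-k-1)z$ uses Yoshinaga's criterion (Theorem \ref{Ycriterion}) to establish freeness of the restriction, followed by Terao's deletion theorem (Theorem \ref{Teraodeletion}). After these three steps $\A_2$ is free with $\exp(\A_2)=(1,4k+7,4k+7,4k+7)$, and only then are the three simple-root hyperplanes removed simultaneously via Theorem \ref{SPOGMDT}, whose hypotheses ($|\A_{H_i\cap H_j}|=2$ pairwise, and non-freeness of each single deletion $\A_2\setminus\{H_i\}$) hold for this triple and deliver the \emph{minimal} resolution directly, with no cancellation argument needed. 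To repair your proposal you would have to prove freeness of the intermediate arrangements --- equivalently, that the cancellation occurs already at each of the first three steps --- which is exactly the technical content the paper supplies and your write-up postpones to an unsupported final step.
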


The strategy of the proof is as follows. We start from the Shi arrangement which is free as in Theorem \ref{thm:CatShi}, and delete six hyperplanes from it in a certain order. In the first three steps of deletion, we can keep the freeness by using several deletion theorems like Theorems \ref{MDT}, \ref{MDT2} 
and \ref{Ycriterion}. In the last deletion of three hyperplanes, we apply Theorem \ref{SPOGMDT} to complete the proof. Now let us start the proof.

\begin{proof}[Proof of Theorem \ref{k2case}]
    Let $\A:=c\A^{[-k-1,k+2]}_{\Phi^+}$. This is the 
extended Shi arrangemenet, whose exponents are $(1,4k+8,4k+8,4k+8)$ by Theorem \ref{thm:CatShi}. First, let us compute 
$|\A|-|\A^H|$ for $\alpha=(-k-1)z$ for non-simple roots $\alpha$. 

Let us first analyze the case $H:x_1-x_4=(-k-1)z$. We have 
\begin{align*}
Q(\A^H)=
z&\prod_{-2k-3 \le c \le k+2} (x_2-x_4-cz)(x_3-x_4-cz) \cdot \\ 
&\prod_{-k-1 \le c \le k+2}(x_2-x_3-cz).
\end{align*}
Thus, $|\A^H|=8k+17$ and $|\A|-|\A^H|=4k+8 \in \exp(\A)$. Hence Theorem \ref{MDT} shows that 
$\A_0:=\A \setminus \{H\}$ is free with $\exp(\A_0)=(1,4k+7,4k+8,4k+8)$. 
\bigskip

Next, let us look at $H:x_1-x_3=(-k-1)z$. This time we have 
\begin{align*}
    Q(\A_0^H)=
z&\prod_{-2k-3 \le c \le k+2} (x_2-x_3-cz)\prod_{-k-1 \le c \le 2k+3} (x_3-x_4-cz)\\
&\prod_{-k-1 \le c \le k+2}(x_2-x_4-cz).
\end{align*}
So, we have $|\A_0^H|=8k+16$ and $|\A_0|-|\A^H|=4k+8 \in \exp(\A)$. 
Hence, Theorem \ref{MDT2} shows that 
$\A_1:=\A_0 \setminus \{H\}$ is free with $\exp(\A_1)=(1,4k+7,4k+7,4k+8)$. 
\bigskip

Further, let us consider $H:x_2-x_4=(-k-1)z$. In this case we have 
\begin{align*}
Q(\A_1^H)=
z&\prod_{-2k-2 \le c \le k+2} (x_1-x_4-cz)
\prod_{-2k-3 \le c \le k+2} (x_3-x_4-cz)\\
&\prod_{-k \le c \le k+2}(x_1-x_3-cz).
\end{align*}
Hence, $|\A_1^H|=8k+15$ and $|\A_1|-|\A_1^H|=4k+8 \in \exp(\A)$. 
We see that, for $\A_2:=\A_1 \setminus \{H\}$, $b_2^0(\A_2^H):=
b_2(\A_2^H)-|\A_2^H|+1=(4k+7)^2$, and $\exp((\A_2^H)^{z=0},m^{z=0})=(4k+7,4k+7)$. Hence, 
Theorem \ref{Ycriterion} shows 
that $\A_2^{H}$ is free with $\exp(\A_2^H)=(1,4k+7,4k+7)$. 
So Theorem \ref{Teraodeletion} shows that 
$\A_2$ is free with $\exp(\A_1)=(1,4k+7,4k+7,4k+7)$. 
\bigskip

Finally, for $H_1:=\{x_1-x_2=-k-1\}$, 
$H_2:=\{x_2-x_3=-k-1\}$, and 
$H_3:=\{x_3-x_4=-k-1\}$, we can directly compute that $|\A_3^{H_i}|=8k+14$ and $|\A_2|-
|\A_2^{H_i}|=4k+8$. Since $H_1,H_2,H_3$ satisfty 
the conditions in Theorem \ref{SPOGMDT}, we see that $\A^{[-k,k+2]}_{\Phi^+}=
\A_2 \setminus \{H_1,H_2,H_3\}$ has the following minimal free resolution
$$
0 \rightarrow 
S[-4k-8]^3 
\rightarrow 
S[-4k-7]^6 
\rightarrow 
D(\A^{[-k,k+2]}_{\Phi^+}) 
\rightarrow 
0.
$$
This completes the proof. \end{proof}

\section{Arrangements of type $B_2$}
\label{section:B2}

In this section, we prove Theorem \ref{resolution-B2}. Let $\Phi$ be the root system of type $B_2$, thus 
$$
\A_{\Phi^+}=\{xy(x^2-y^2)=0\}
$$
This is an arrangement in $V=\K^2$. We focus on its deformations, which are the arrangements in $\K^3$ defined, for given integers $k, j \ge 0$, as 
$$
\A_{\Phi^+}^{[-k,k+j]}= \bigcup_{-k \le s \le k+j} \{x=s\} \cup \{y=s\} \cup \{x+y=s\} \cup \{x-y=s\}.
$$

To prove Theorem \ref{resolution-B2}, we start with the following result.
Write $j=2m+r$, $r \in \{0,1\}$.

\begin{prop} \label{CatShi}
Let $\A=c\A_{\Phi^+}^{[-k,k+2m+r]}$, with $r \in \{0,1\}$, and let 
$$
\B=\left(\bigcup_{-k-m \le s \le -k-1} \{y=sz\}  \cup \A \right)\setminus \bigcup_{k+m+r \le s \le k+2m+r} \{y=sz\}.
$$
Then $\B$ is free with $\exp(\B)=(1,4k+4m+1+3r,4k+4m+3+r)$.
\end{prop}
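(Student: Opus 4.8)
The plan is to prove freeness of $\B$ not by a direct addition--deletion argument but by observing that $\B$ is linearly isomorphic to an extended Catalan or Shi arrangement, whose freeness is already recorded in Theorem~\ref{thm:CatShi}. The role of the apparently ad hoc modification of the $y$-family in the definition of $\B$ (adding the levels $-k-m\le s\le -k-1$ and deleting the top levels) is precisely to recenter that family so that, after a single shear of $\K^3$, all four parallel pencils of $\B$ become a common uniform deformation, which is then visibly of Catalan or Shi type.

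First I would tabulate the four parallel families of $\B$. The $x$-, $(x+y)$- and $(x-y)$-pencils each consist of the hyperplanes at levels $-k\le s\le k+2m+r$, while the modified $y$-pencil consists of $\{y=sz\}$ at levels $-k-m\le s\le k+m+r$; the left additions and the right deletions in the definition together shift the $y$-levels down by $m$ relative to the other three pencils, leaving all four pencils with the same cardinality $2k+2m+r+1$. Verifying that these additions and deletions are exactly balanced, so that the $y$-pencil is a clean $(-m)$-translate of the others rather than off by one, is the delicate point of the argument and is the step I expect to be the main obstacle: it is where the precise endpoints in the statement must be pinned down, and where an off-by-one slip would contradict Saito's criterion outright.

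Next I would introduce the shear $\sigma\colon (x,y,z)\mapsto (x-mz,\,y,\,z)$, a linear automorphism of $\K^3$ defined over $\K$ (recall $m\in\Z$ and $\K$ has characteristic zero). It fixes $\{z=0\}$ and the entire $y$-pencil, and sends $\{x=sz\}$, $\{x+y=sz\}$, $\{x-y=sz\}$ to $\{x=(s-m)z\}$, $\{x+y=(s-m)z\}$, $\{x-y=(s-m)z\}$ respectively. By the previous paragraph $\sigma(\B)$ then has all four pencils ranging over $-k-m\le s\le (k+m)+r$, so that $\sigma(\B)=c\A_{\Phi^+}^{[-(k+m),\,(k+m)+r]}$: this is the extended Catalan arrangement when $r=0$ and the extended Shi arrangement when $r=1$. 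Since a linear automorphism of $V$ induces a graded isomorphism $D(\B)\cong D(\sigma(\B))$, the module $D(\B)$ is free with the same exponents as $D(\sigma(\B))$.

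Finally I would read off the exponents from Theorem~\ref{thm:CatShi} with $\Phi$ of type $B_2$ (so $h=4$ and $\exp(\A_{\Phi^+})=(1,3)$). The Catalan case $r=0$ gives $\exp=(1,4(k+m)+1,4(k+m)+3)$ directly, while in the Shi case $r=1$ the two non-unit exponents are equal, and together with the $1$ they sum to $|\sigma(\B)|$ by Saito's criterion (Theorem~\ref{saito}), forcing $\exp=(1,4(k+m+1),4(k+m+1))$. These combine into the asserted $(1,4k+4m+1+3r,4k+4m+3+r)$. As an independent check that also sidesteps the use of linear invariance, one could instead apply Yoshinaga's criterion (Theorem~\ref{Ycriterion}) to $\B$ at $H=\{z=0\}$: the Ziegler restriction is the $B_2$ Weyl multiarrangement carrying the four family-cardinalities as multiplicities, whose exponents are explicit, and one then verifies the numerical identity $\chi_0(\B;0)=d_1d_2$.
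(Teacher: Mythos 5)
Your proposal is correct and takes essentially the same route as the paper: the paper's entire proof is a one-line coordinate substitution in $x$ identifying $\B$ with the extended Catalan ($r=0$) or extended Shi ($r=1$) arrangement, followed by an appeal to Theorem \ref{thm:CatShi}, which is exactly your shear argument (your shift by $mz$ is the right amount; the paper's ``replace $x$ by $x+kz$'' is evidently a slip for $x+mz$). The off-by-one you single out as the delicate point is genuinely present in the printed statement: the deletion range $k+m+r\le s\le k+2m+r$ removes $m+1$ levels, so $|\B|=8k+8m+4r+4$ would contradict the exponent sum $1+(4k+4m+1+3r)+(4k+4m+3+r)=8k+8m+4r+5$ forced by Theorem \ref{saito}; the intended range is $k+m+r+1\le s\le k+2m+r$ (as used for $\B_0$ in the proof of Theorem \ref{resolution-B2} when $r=0$), under which your tabulation, the identification $\sigma(\B)=c\A_{\Phi^+}^{[-(k+m),(k+m)+r]}$, and your Saito-based reading of the Shi exponents all go through.
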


\begin{proof} 
    Replace $x$ by $x+kz$. Then $\B$ is a Catalan arrangement for $r=0$, while $\B$ is a Shi arrangement for $r=1$. 
     This completes the proof, since the Catalan and the Shi  arrangements are free with the desired exponents by Theorem \ref{thm:CatShi}.
\end{proof}

\subsection{Addition and deletion for arrangements of type $B_2$}

The goal of this subsection is to complete the proof of 
Theorem \ref{resolution-B2}. Following the strategy suggested by Proposition \ref{CatShi}, we add and delete lines successively and we keep track of the behavior of the minimal graded free resolution at each step to reach the final statement. We develop the proof a bit differently, depending on whether $j$ is even or odd, i.e. whether   $r=0$ or $r=1$. 

\begin{proof}[Proof of Theorem \ref{resolution-B2}] First 
let us assume that $r=0$. Then we know that the arrangement 
$\B$ in Proposition \ref{CatShi} 
is free with exponents $(1,t+1,t+3)$, where $t=4k+4m$. 
To pass from $\B$ to $\A$ we add/delete hyperplanes iteratively and, at each step,
we apply Lemmas \ref{lemma1} and \ref{lemma2}.
More precisely, we need to delete $H:y=(-k-m+u)z$ and add 
$L_u:y=(k+m+u+1)z$, for $u \in \{0,\ldots,m-1\}$.
\medskip

For all integers $u$ with $0 \le u \le m$, we define the following arrangement:
$$
\B_u:=\left(\bigcup_{-k-m+u \le s \le -k-1} \{y=sz\} \cup \A \right)\setminus \bigcup_{k+m+u+1 \le s \le k+2m} \{y=sz\},
$$
so that $\B_0=\B$ and $\B_m=\A$. 
We prove that, for $u\ge 0$,
there is a minimal free resolution
\begin{eqnarray*}
0 &\rightarrow& 
S[-t-3-u]\oplus \bigoplus_{i=2}^u S[-t-2-u-i]^2\\
&\rightarrow &
S[-t-1-u]\oplus \bigoplus_{i=1}^u S[-t-1-u-i]^2
\rightarrow D_0(\B_u) \rightarrow 0
\end{eqnarray*}

First, let us look at the case $u=0$.
In order to reach $\B_1$ starting from $\B_{0}=\B$, we need to delete $H_0:y=(-k-m)z$ and add 
$L_0:y=(k+m+1)z$.  
Let us look at the effect of deleting $H_0$. Recall that $\B_0=\B$ consists of hyperplanes parallel to $y=0$, $z=0$ and 
$$
x,x\pm y=-k,\ldots,k+2m.
$$
Hence, looking at the intersection points with  $H_0$, we see that
$
\B^{H_0}
$ consists of $z=0$ and $
y=2k+3m,\ldots,-2k-m$.
So $|\B^{H_0}|=4k+4m+2$ and therefore $\exp(\B^{H_0})=(1,t+1)$. Hence, Theorem \ref{Teraodeletion} shows that 
$\B \setminus \{{H_0}\}$ is free with exponents $(1,t+1,t+2)$. 
Now let us add $L_0$ to $\B \setminus \{{H_0}\}$, which is $\B_{1}$. Note that $\B_{1}^{L_0}$ consists of $z=0$ and 
$$
y=2k+3m+1,\ldots,-2k-m-1.
$$
So $|\B_{u+1}^{L_0}|=t+4$, so $d$ in Lemma \ref{lemma2} is $t+3$, which satisfies the conditions in Lemma \ref{lemma2}. Thus, Lemma \ref{lemma2} shows that 
we have a minimal free resolution 
\[
0 \rightarrow 
S[-t-4]
\rightarrow 
S[-t-2]\oplus  S[-t-3]^2 
\rightarrow D_0(\B_{1}) \rightarrow 0,
\]
which matches the resolution above when $u=0$. 
\bigskip

Now we look at the induction step. Assume that the statement is true 
up to $u \ge 0$. Starting from the arrangement $\B_u$, in order to obtain $\B_{u+1}$, we need to delete ${H_u}:y=(-k-m+u)z$ and add 
$L_u:y=(k+m+u+1)z$.  
First, let us proceed with deletion of ${H_u}$. Recall that $\B_u$ consists of hyperplanes parallel to $y=0$, $z=0$ and 
$$
x,x\pm y=-k,\ldots,k+2m.
$$
Hence, we see that the restricted arrangement
$
\B_u^{H_u}
$ consists of $z=0$ and $
y=2k+3m-u,\ldots,-2k-m+u$.
So $|\B_u^{H_u}|=4k+4m-2u+2$. Since $|\B_u|=8k+8m+5$, we have that 
$d:=|\B_u|-1-|\B_u^{H_u}|=t+2u+2$, which satisfies the condition in Lemma \ref{lemma1}. So we have a minimal free resolution 
\begin{eqnarray*}
0 &\rightarrow& 
S[-t-3-u]\oplus \bigoplus_{i=2}^u S[-t-2-u-i]^2 \oplus S[-t-2u-3]\\
&\rightarrow &
S[-t-1-u]\oplus \bigoplus_{i=1}^u S[-t-1-u-i]^2 \oplus S[-t-2u-2]
\rightarrow D_0(\B_u \setminus \{{H_u}\}) \rightarrow 0.
\end{eqnarray*}

Now let analyze the arrangement $\B_{u+1}$, which is obtained by adding $L_u$ to $\B_u \setminus \{{H_u}\}$. Note that $\B_{u+1}^{L_u}$ consists of $z=0$ and 
$$
y=2k+3m+u+1,\ldots,-2k-m-u-1.
$$
So $|\B_{u+1}^{L_u}|=t+2u+4$, so $d$ in Lemma \ref{lemma2} is $t+2u+3$, which satsifies the conditions in Lemma \ref{lemma2}. Thus Lemma \ref{lemma2} shows that 
we have a minimal free resolution 
\begin{eqnarray*}
0 &\rightarrow& 
S[-t-4-u]\oplus \bigoplus_{i=2}^{u+1} S[-t-3-u-i]^2 \\
&\rightarrow &
S[-t-2-u]\oplus \bigoplus_{i=1}^{u+1} S[-t-2-u-i]^2 
\rightarrow D_0(\B_{u+1}) \rightarrow 0.
\end{eqnarray*}
This completes the proof in the case $r=0$.

\bigskip

So let us assume that $r=1$. Then we know that the arrangement 
$\B$ defined in Proposition \ref{CatShi} in the case $r=1$ is free with exponents $(1,t+4,t+4)$ by Theorem \ref{thm:CatShi}, where $t=4k+4m$. To pass from $\B$ to $\A$, again we add/delete hyperplanes in a recursive manner, applying at each step Lemmas \ref{lemma1} and \ref{lemma2}.
Thus, for all integers $u$ with $0 \le u \le m$, we set
$$
\B_u:=
\left(\bigcup_{-k-m+u \le s \le -k-1}\{y=sz\} \cup \A \right)\setminus \bigcup_{k+m+u+1 \le s \le k+2m+1} \{y=sz\},
$$
so again $\B_0=\B$ and $\B_m=\A$. We prove that, for $0 \le u \le m$,
there is a minimal free resolution
\[
0 \rightarrow 
\bigoplus_{i=1}^{u} S[-t-5-u-i]^2
\rightarrow 
\bigoplus_{i=0}^{u} S[-t-4-u-i]^2
\rightarrow D_0(\B_u) \rightarrow 0.
\]

The case $u=0$ is nothing but Proposition \ref{thm:CatShi}. Looking at the induction step, we assume that the statement is true 
up to $u \ge 0$. To convert $\B_u$ into $\B_{u+1}$, we need to delete ${H_u}:y=(-k-m+u)z$ and add 
$L_u:y=(k+m+u+2)z$.  
Let us delete ${H_u}$. Recall that $\B_u$ consists of hyperplanes parallel to $y=0$, $z=0$ and 
$$
x,x\pm y=-k,\ldots,k+2m+1.
$$
Hence, we see that 
$
\B_u^{H_u}
$ consists of $z=0$ and $
y=2k+3m-u+1,\ldots,-2k-m+u$.
So $|\B_u^{H_u}|=4k+4m-2u+3$. Since $|\B_u|=8k+8m+9$, we get
$d:=|\A|-1-|\A^{H_u}|=t+2u+5$, which satisfies the condition in Lemma \ref{lemma1}. So we have a minimal free resolution 
\begin{eqnarray*}
0 &\rightarrow& 
\bigoplus_{i=1}^{u} S[-t-5-u-i]^2 \oplus S[-t-2u-6]\\
&\rightarrow &
\bigoplus_{i=0}^{u} S[-t-4-u-i]^2 \oplus S[-t-2u-5]
\rightarrow D_0(\B_u \setminus \{{H_u}\}) \rightarrow 0.
\end{eqnarray*}

Finally, let us add $L_u$ to $\B_u \setminus \{{H_u}\}$, whereby obtaining the arrangement $\B_{u+1}$. Note that $\B_{u+1}^{L_u}$ consists of $z=0$ and 
$$
y=2k+3m+u+3,\ldots,-2k-m-u-2.
$$
So $|\B_{u+1}^{L_u}|=t+2u+7$, so the value of the quantity $d$ appearing in Lemma \ref{lemma2} is $t+2u+6$, which satisfies the conditions of Lemma \ref{lemma2}. Therefore, Lemma \ref{lemma2} provides a minimal free resolution 
\[
0 \rightarrow
\bigoplus_{i=1}^{u+1} S[-t-6-u-i]^2 \rightarrow 
\bigoplus_{i=0}^{u+1} S[-t-5-u-i]^2 
\rightarrow D_0(\B_{u+1}) \rightarrow 0.
\]
This completes the proof when $r=1$.
\end{proof}

\subsection{Stability and jumping lines}

Let $\Phi^+$ be set of positive roots of the root system of type $B_2$ and consider $\A:=c\A^{[-k,k+j]}_{\Phi^+}$.
Put $\E = \widetilde{D_0(c\A)}$.
In this subsection, we study the stability properties of the sheaf $\E$, which is a vector bundle of rank two on $\P^2=\P^2_\K$.

\begin{lemma}
Assume $j \ge 3$.
    Then $\E$ is a slope-stable bundle.
\end{lemma}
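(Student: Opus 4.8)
The plan is to reduce slope-stability to a single cohomological vanishing and then read that vanishing off from the resolution in Theorem~\ref{resolution-B2}. Since stability is unaffected by twisting, I work with the normalized bundle $\F=\E(4k+2j+2)$, which satisfies $c_1(\F)=0$: indeed $|\A|=8k+4j+5$, so $c_1(\E)=1-|\A|=-(8k+4j+4)$ and the twist cancels this. For a rank-two bundle on $\P^2$ with vanishing first Chern class, slope-stability is equivalent to $H^0(\F)=0$. Concretely, any destabilizing sub-line-bundle is the saturation of a rank-one subsheaf, hence of the form $\cO(a)\hookrightarrow\F$ with $a\ge 0$ (as the slope of $\F$ is $0$); composing with the inclusion $\cO\hookrightarrow\cO(a)$ given by a degree-$a$ form yields a nonzero section of $\F$. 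Conversely a section of $\F$ gives $\cO\hookrightarrow\F$, which destabilizes since $\cO$ and $\F$ have equal slope. Thus it suffices to prove $H^0(\F)=H^0(\E(4k+2j+2))=0$.

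Next I pass to module language. By Proposition~\ref{Eis} one has $H^0(\E(d))=D_0(\A)_d$ for every $d\in\Z$, so the goal becomes showing that the graded piece $D_0(\A)_{4k+2j+2}$ vanishes. Theorem~\ref{resolution-B2} gives that $D_0(\A)$ is generated in degrees at least $1+5m+3r+4k$ (the first row of the Betti table), whence $D_0(\A)_d=0$ for all $d<1+5m+3r+4k$.

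It remains to compare the two degrees. Writing $j=2m+r$ we get $4k+2j+2=4k+4m+2r+2$, so the inequality $4k+2j+2<1+5m+3r+4k$ reduces to $m+r\ge 2$. This is exactly what $j\ge 3$ provides: if $r=0$ then $j=2m$ is even and at least $4$, forcing $m\ge 2$; if $r=1$ then $2m+1\ge 3$ forces $m\ge 1$; in both cases $m+r\ge 2$. Hence $4k+2j+2$ lies strictly below the minimal generator degree, $D_0(\A)_{4k+2j+2}=0$, $H^0(\F)=0$, and $\E$ is slope-stable.

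The one step demanding care is the degree bookkeeping: the identification $H^0(\E(d))=D_0(\A)_d$ from Proposition~\ref{Eis} must be combined with the normalization making $c_1(\F)=0$ so that the twist $4k+2j+2$ really falls strictly below the first generator degree. This is precisely where the hypothesis $j\ge 3$ is used and is sharp: for $j=2$ one has $m+r=1$ and $4k+2j+2=4k+6=1+5m+3r+4k$, so the relevant graded piece coincides with the lowest generator degree and need not vanish, so this argument no longer forces stability.
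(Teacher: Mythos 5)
Your proof is correct and follows essentially the same route as the paper: normalize to $\F=\E(4k+2j+2)$ with $c_1(\F)=0$, reduce slope-stability to $H^0(\F)=0$ (the paper simply cites Hoppe's criterion where you prove the equivalence by hand), and deduce the vanishing from the minimal generator degree $1+5m+3r+4k$ in Theorem~\ref{resolution-B2}, with the inequality reducing to $m+r>1$, i.e.\ $j\ge 3$. Your closing observation that the bound is sharp at $j=2$ matches the paper's subsequent remark on semistability.
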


\begin{proof}
Write $j=2m+r$ for some integers $(m,r)$ with $r \in \{0,1\}$.
Note that the arrangement $\A$ consists of $4(2k+j+1)+1$ lines in $\P^2$, so that the rank-$2$ vector bundle $\E$ satisfies
\[
c_1(\E)=-4(2k+j+1)h,
\]
where $h$ is the hyperplane divisor of $\P^2$.

We consider $\F=\E(2(2k+j+1))$, so that $c_1(\F)=0$.
In view of Hoppe's criterion (see \cite{hoppe:generischer}), to prove that $\E$ is stable it suffices to check that $H^0(\F)=0$.
However, sheafifying the minimal graded free resolution provided by Theorem \ref{resolution-B2}, we see that that $H^0(\E(i h))=0$, for all integers $i < 1+5m+3r+4k$.
Then, we have to check that $2(2k+j+1) < 1+5m+3r+4k$, which means $m + r > 1$. This is equivalent to $j \ge 3$, so the lemma is proved.
\end{proof}

\begin{rem}
    With the same notation as the previous lemma, we note that
    for $j=2$, the bundle $\E$ is slope-semistable (but not stable). Indeed, sheafifying the resolution of Theorem \ref{main}, we get the exact sequence
    \[
    0 \to \mathcal{O}_{\P^2} \to \F \to \mathcal{I}_{p/\P^2} \to 0,
    \]
    where $p$ is a point in $\P^2$. In can be seen from Theorem \ref{B2:jump} below, that $p$ is the intersection point of the lines $H_{m-1}$ and $L_{m-1}$, which it to say that $p=(1:0:0)$.
\end{rem}

\begin{define}
Given a line $\ell \subset \P^2$, there is a non-negative integer $b=b_\ell$ such that
\[
\F|_\ell \simeq \cO_\ell(b_\ell) \oplus \cO_\ell(-b_\ell).
\]
We say that the \textbf{jumping order of $\ell$ for $\E$ (or for $\F$) is $a$}, or that $\ell$ is a \textbf{$a$-jumping} line of $\E$ (or of $\F$) if $b_\ell=a > 0$. 
This notion does not depend on twisting $\E$ by a line bundle.
\end{define}
Since $\E$ is slope-semistable for $j\ge 2$, we have 
$b_\ell=0$ when $\ell$ is a sufficiently general line in $\P^2$, so
$\ell$ is not jumping.
From the proof of Theorem \ref{resolution-B2}, we deduce the following result.

\begin{theorem} \label{B2:jump}
    For $u \in \{1,\ldots,m-1\}$, consider the lines 
    \[H_u:y=(-k-m+u)z, \qquad 
L_u:y=(k+m+u+r+1)z.\]
Then $H_u$ and $L_u$ are $(2u+r+1)$-jumping lines of $\E$. 
In addition, for $m\ge 2$, $H_{m-1}$ and $L_{m-1}$ are the only $(j-1)$-jumping lines of $\E$ and this is the maximal jumping order.
\end{theorem}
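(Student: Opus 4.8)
The plan is to work with the normalized bundle $\F=\E(4k+2j+2)$, which has $c_1(\F)=0$ and, being a rank-two bundle with vanishing first Chern class, satisfies $\F\cong\F^\vee$; recall also that $\F$ is stable for $j\ge 3$, so a general line is non-jumping. I will prove the two assertions separately: first that the listed lines jump by $2u+r+1$, and then that $j-1$ is the maximal jumping order, attained only along $H_{m-1}$ and $L_{m-1}$.

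For the first assertion I would feed the lines into Theorem \ref{AFV}. Note that $L_u=\{y=(k+m+u+r+1)z\}$ belongs to $\A$ while $H_u=\{y=(-k-m+u)z\}$ does not, so $L_u$ is handled by part (i) and $H_u$ by part (ii); in each case the input is the number of points that $\A$ cuts on the line. For an arbitrary line $\{y=cz\}$ I would parametrize its points by the $x$-coordinate: the families $\{x=sz\}$, $\{x+y=sz\}$, $\{x-y=sz\}$ contribute the integer intervals $[-k,k+j]$, $[-k,k+j]-c$ and $[-k,k+j]+c$, which coalesce into the single interval $[-k-|c|,k+j+|c|]$ as soon as $|c|\le 2k+j$ (automatic here), while $\{z=0\}$ and all remaining lines $\{y=sz\}$ meet $\{y=cz\}$ at the one point $[1:0:0]$. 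This gives $2k+j+2+2|c|$ points. Substituting the two values of $c$, checking the (elementary) numerical hypotheses of Theorem \ref{AFV}, and twisting the resulting splitting by $4k+2j+2$, one reads off $\F|_{H_u}\cong\F|_{L_u}\cong\cO(2u+r+1)\oplus\cO(-(2u+r+1))$; the case $u=m-1$ gives jumping order $j-1$.

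Point counting only detects arrangement-parallel lines, so for maximality I would switch to cohomology. If $\ell=V(h)$ is an $a$-jumping line, the sequence $0\to\F(-a-1)\xrightarrow{\,h\,}\F(-a)\to\F|_\ell(-a)\to0$, together with $h^0(\F|_\ell(-a))=1$ and $H^0(\F(-a))=0$ (for $a\ge1$ the degree $4k+2j+2-a$ lies below the least generator degree $1+5m+3r+4k$ of $D_0(\A)$), forces $H^1(\F(-a-1))\neq0$. It then remains to bound the support of $H^1_*(\F)$. Sheafifying the resolution of Theorem \ref{resolution-B2} and applying local duality, $H^1_*(\E)$ is the graded dual of $\Ext^1_S(D_0(\A),S(-3))$, a module generated in the degrees opposite to the syzygy degrees; hence its top degree equals (largest syzygy degree)$-3=(2+6m+3r+4k)-3$, and after subtracting the twist $4k+2j+2$ the top degree of $H^1_*(\F)$ is $j-3$. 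By the self-dual symmetry $h^1(\F(t))=h^1(\F(-3-t))$ its least degree is $-j$, so $H^1(\F(-a-1))\neq0$ forces $-a-1\ge -j$, i.e. $a\le j-1$; combined with the first part this yields $a_{\max}=j-1$.

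The uniqueness is the crux and the main obstacle. Running the same restriction sequence in degree $-(j-1)$, and using $H^0(\F(1-j))=0$, shows that a line $V(h)$ is $(j-1)$-jumping exactly when multiplication by $h$ has a kernel on $H^1(\F(-j))$, i.e. $h\colon H^1(\F(-j))\to H^1(\F(1-j))$ drops rank. A Riemann--Roch computation from the Betti numbers gives $\dim H^1(\F(-j))=2$ and $\dim H^1(\F(1-j))=6$ (for $r\in\{0,1\}$), so the degeneracy locus lives in the dual plane and, in terms of the three maps $\cdot x,\cdot y,\cdot z\colon H^1(\F(-j))\to H^1(\F(1-j))$, consists of those $h$ for which some nonzero $v$ has $xv,yv,zv$ linearly dependent. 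The first part already places $H_{m-1}=\{y=-(k+1)z\}$ and $L_{m-1}=\{y=(k+j)z\}$ in this locus. The hard remaining step is to show it contains nothing else: this requires the explicit multiplication maps on the two-dimensional space $H^1(\F(-j))$—equivalently the module structure of $H^1_*(\F)$ at its bottom degree, which can be extracted from the comparison maps in the step-by-step resolution built in the proof of Theorem \ref{resolution-B2}—followed by a direct check that the rank drops only along these two forms. I expect this explicit bookkeeping, rather than any conceptual difficulty, to be the principal obstacle.
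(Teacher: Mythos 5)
Your proof of the first assertion coincides with the paper's: the paper also feeds $H_u$ and $L_u$ into Theorem \ref{AFV}, and your point count $2k+j+2+2|c|$ reproduces exactly the paper's values $|\A^{H_u}|=4k+4m+r-2u+2$ and $|\A\cap L_u|=4k+4m+3r+2u+4$. Your maximality bound is correct but takes a genuinely different route: the paper lifts the surjection $\F \to \cO_H(-b)$ through the sheafified resolution \eqref{resolution-B2-sheaf} and rules out $b>j-1$ because every line-bundle summand of the middle term has degree $\ge -m-r-(m-1)=1-j$ on $H$; you instead bound the degrees where $H^1_*(\F)$ lives (top degree $j-3$ from the syzygy degrees of Theorem \ref{resolution-B2}, bottom degree $-j$ from $\F\cong\F^\vee$ and Serre duality) and use the restriction sequence together with $H^0(\F(-a))=0$. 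Both computations check out, and your version has the merit of being independent of the particular shape of the resolution matrix.

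The genuine gap is the uniqueness of the $(j-1)$-jumping lines, which you acknowledge but do not close. Reducing uniqueness to the rank-drop locus of $h\colon H^1(\F(-j))\to H^1(\F(1-j))$ is fine, but with $h^1(\F(-j))=2$ nothing a priori prevents this locus from being larger than two points of the dual plane (for each fixed $0\ne v$ the forms $h$ with $hv=0$ form a linear space, and the union over $v\in\P^1$ could even be a curve), so the ``explicit bookkeeping'' you defer is precisely the mathematical content, not a routine verification. The paper avoids this computation by a structural observation about the resolution it has built: the addition/deletion procedure produces a presentation matrix containing a diagonal $2\times 2$ block $M_{m-1}\colon \cL_{m-1}(-1)^{\oplus 2}\to\cL_{m-1}^{\oplus 2}$ whose entries are the defining forms of $H_{m-1}$ and $L_{m-1}$, so that $\coker(M_{m-1})\simeq \cO_{H_{m-1}}(1-j)\oplus\cO_{L_{m-1}}(1-j)$. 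A $(j-1)$-jumping line $H$ gives a surjection $\F\to\cO_H(1-j)$; lifting it to the middle term of \eqref{resolution-B2-sheaf}, the same degree comparison you use for maximality kills every component except the one through $\cL_{m-1}^{\oplus 2}$, so the map factors through $\coker(M_{m-1})$, and since a nonzero map $\cO_{L}(1-j)\to\cO_H(1-j)$ between sheaves supported on lines forces $L=H$, one gets $H\in\{H_{m-1},L_{m-1}\}$. In other words, the missing ingredient in your plan is the diagonal shape of the relevant block of the resolution coming from the proof of Theorem \ref{resolution-B2} --- exactly the ``comparison maps'' you would have had to extract --- and without it your argument establishes the maximal order $j-1$ and exhibits the two lines, but not that they are the only ones.
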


\begin{proof}
We work with $\A=\A_{\Phi^+}^{[-k,k+2m+r]}$ and the vector bundle $\E = \widetilde{D_0(c\A)}$ of rank $2$ on $\P^2$.
  Set $\cL_u=\cO_{\P^2}(-m-r-u)$.
  Sheafifying the minimal graded free resolution obtained in Theorem
  \ref{resolution-B2}, we get a short exact sequence of locally free sheaves:
  \begin{equation} \label{resolution-B2-sheaf}
      0 \to 
    \begin{array}{c}
      \cL_0(-1)^{\oplus (1+r)} \\
      \oplus \\
      \displaystyle{\bigoplus_{1 \le u \le m-1}} \cL_u(-1)^{\oplus 2}
    \end{array} \to
    \begin{array}{c}
      \cL_0(1)^{\oplus(1+r)} \\
      \oplus \\
      \displaystyle{\bigoplus_{0 \le u\le m-1}}
      \cL_u^{\oplus 2}
    \end{array} \to 
     \F \to 0 
    \end{equation} 
  For $1 \le u \le m-1$, let us consider the $2\times 2$ matrix of linear forms $M_u$
  extracted from this resolution, taking the form:
  \[
    M_u : \cL_u(-1)^{\oplus 2} \to \cL_u^{\oplus 2}.
  \]
  By construction, $M_u$ is obtained from the $u$-th step of the
  addition/deletion procedure in the proof of Theorem
  \ref{resolution-B2} and, as such, $M_u$ is the diagonal matrix given
  by the equations defining $L_u$ and $H_u$. Set $C_u=\mathrm{coker}(M_u)$.
  \medskip

  Let us show that, for $1 \le u \le m-1$, the lines $H_u:y=(-k-m+u)z$ and $L_u:y=(k+m+u+r+1)z$ are 
$(2u+r+1)$-jumping lines for $\E$. 
We use Theorem \ref{AFV}. We compute the following values:
$$
|\A^{H_u}|=4k+4m+r-2u+2, \qquad 
|\A \cap L_u|=4k+4m+3r+2u+4.
$$
Since $|\A|=1+4(2k+2m+r)=8k+8m+4r+1$, 
the conditions in Theorem \ref{AFV} are satisfied for both $H_u$ and $L_u$. 
Recall that, by definition, 
\[\F=\E(4k+2j+2)) = \E(4k+4m+2r+2).\]
Hence, we get the following splitting
\begin{eqnarray*}
\F|_{H_u} 
&\simeq& \cO_{H_u}(-r-2u-1)\oplus 
\cO_{H_u}(-r+2u-1),\\
\F|_{L_u} 
&\simeq& \cO_{L_u}(-r-2u-1)\oplus 
\cO_{L_u}(-r+2u-1).
\end{eqnarray*}
So $L_u$ and $H_u$ are $(2u+r+1)$-jumping lines for $\F$, equivalently, for $\E$. In particular, 
$H_{m-1}$ and $L_{m-1}$ are 
$(j-1)$-jumping lines. \medskip

Now, we claim that there is a splitting:
\begin{equation} \label{splitting}
\mathrm{coker}(M_{m-1}) \simeq \cO_{L_{m-1}}(1-j) \oplus 
\cO_{H_{m-1}}(1-j).
\end{equation}

Indeed, first we note that $\mathrm{coker}(M_{m-1})$ is an extension of the sheaves
$\cO_{L_{m-1}}(1-j)$ and $\cO_{H_{m-1}}(1-j)$. 
To see this, first we use that $H_{m-1}$ and $L_{m-1}$ are $(j-1)$-jumping lines of $\F$.
So, in view of the resolution \eqref{resolution-B2-sheaf}, we have thus a surjection map $\cL_u^{\oplus 2}|_{H} \to \cO_H(1-j)$, with $0 \le u \le m-1$ or $\cL_0(1)^{\oplus (1+r)}|_{H} \to \cO_H(1-j)$. By the definition of $\cL_u$, this implies $-m-r-u \le 1-j$, i.e. $u \ge m-1$, so $u = m-1$.
This forces $u = m-1$. Looking at the resolution \eqref{resolution-B2-sheaf}, we deduce that $\cO_{L_{m-1}}(1-j)$ and $\cO_{H_{m-1}}(1-j)$ are both quotients of $\mathrm{coker}(M_{m-1})$. 
Therefore, 
$\mathrm{coker}(M_{m-1})$ is a splitting extension of 
$\cO_{L_{m-1}}(1-j)$ and $\cO_{H_{m-1}}(1-j)$ and \eqref{splitting} is proved.

\medskip
   
Let us check that $L_{m-1}$ and $H_{m-1}$ are the only $(j-1)$-jumping lines of $\E$. Let $H$ be a $(j-1)$-jumping line of $\E$, or equivalently of $\F$. Then, again we have a surjection $\F\to \cO_H(1-j)$.
Once more, we get that $\cO_H(1-j)$ is a quotient of $\mathrm{coker}(M_{m-1})$.
Therefore, in view of \eqref{splitting}, we conclude that $H$ is either $L_{m-1}$ or $H_{m-1}$.
\medskip

By the same argument, if $H$ is a line whose jumping order is $b$ with $b > 2m+r-1$, then we should have a nonzero map $\cL_u^{\oplus 2}|_{H} \to \cO_H(-b)$, with $0 \le u \le m-1$, or $\cL_0(1)^{\oplus (1+r)}|_{H} \to \cO_H(-b)$. But this would imply $-m-r-u \le - b$, i.e. $u \ge b-m-r > m-1$. However, by definition $u \le m-1$, so this is impossible. Hence $2m+r-1$ is the maximal jumping order of a line $H \subset \P^2$ for the vector bundle $\E$.
\end{proof}

\begin{cor} \label{not-isomorphic}
    Let $j \ge 3$ be an integer, consider $k,k'$ distinct non-negative integers and set $\A = c\A^{[-k,k+j]}_{B_2}$, $\A' = c\A^{[-k',k'+j]}_{B_2}$. Then $D_0(\A)$ and $D_0(\A')[4(k'-k)]$     share the same graded Betti numbers but are not isomorphic.
\end{cor}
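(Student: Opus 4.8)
The plan is to separate the two assertions. The coincidence of graded Betti numbers is immediate from Theorem \ref{resolution-B2}: with $j=2m+r$ held fixed, every entry of the Betti table of $D_0(\A)$ sits in a degree of the form $i+4k$, where $i$ depends only on $m$ and $r$. Passing from $k$ to $k'$ therefore shifts all of these degrees by $4(k'-k)$, and the grading shift $[4(k'-k)]$ applied to $D_0(\A')$ sends each generator (resp.\ syzygy) from degree $i+4k'$ back to degree $i+4k$. Hence $D_0(\A)$ and $D_0(\A')[4(k'-k)]$ have the same $\beta_{0,\bullet}$ and $\beta_{1,\bullet}$.

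For the non-isomorphism I would argue by contradiction, transferring the question to the associated bundles. Suppose $D_0(\A)\cong D_0(\A')[4(k'-k)]$ as graded $S$-modules. Sheafifying and using $\widetilde{M[a]}=\widetilde{M}(a)$, this yields an isomorphism of rank-$2$ bundles on $\P^2$
\[
\E \cong \E'\bigl(4(k'-k)\bigr),
\]
so $\E$ and $\E'$ agree up to a twist by a line bundle. I then invoke Theorem \ref{B2:jump}, which pins down, for each of the two bundles, the lines of maximal jumping order $j-1$: for $\E$ these are $H_{m-1}:\{y=(-k-1)z\}$ and $L_{m-1}:\{y=(k+j)z\}$, while for $\E'$ they are $\{y=(-k'-1)z\}$ and $\{y=(k'+j)z\}$. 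Since the jumping order of a line is unchanged under twisting $\E$ by a line bundle, $\E'(4(k'-k))$ carries exactly the same set of $(j-1)$-jumping lines as $\E'$. Comparing with $\E$ forces the equality of unordered slope sets $\{-k-1,\,k+j\}=\{-k'-1,\,k'+j\}$. A termwise match gives $k=k'$, whereas the crossed match $-k-1=k'+j$ cannot hold for $k,k'\ge 0$ and $j\ge 3$; either alternative contradicts $k\neq k'$.

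The conceptual content---that the finer projective geometry of jumping lines detects the shift parameter $k$ while the graded Betti numbers are blind to it---is already carried by Theorem \ref{B2:jump}, so the corollary is largely a repackaging of that result. Accordingly, the only points that need care are bookkeeping ones: matching the algebraic grading shift $[4(k'-k)]$ with the geometric twist $\E'(4(k'-k))$ under sheafification, and checking that the two maximal-jumping-line configurations are genuinely distinct for $k\neq k'$. The one genuine gap to watch is the borderline case $j=3$ (that is, $m=1$), where the uniqueness statement of Theorem \ref{B2:jump} is phrased under the hypothesis $m\ge 2$; there one inspects the sheafified resolution \eqref{resolution-B2-sheaf} directly to confirm that $\{y=(-k-1)z\}$ and $\{y=(k+j)z\}$ remain the only lines of maximal jumping order, after which the same slope comparison applies.
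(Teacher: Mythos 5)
Your proposal is correct and follows essentially the same route as the paper: the same Betti-number bookkeeping from Theorem \ref{resolution-B2}, then the same contradiction via sheafification and the $(j-1)$-jumping lines of Theorem \ref{B2:jump}, using that jumping order is invariant under twisting so that $\E\cong\E'(4(k'-k))$ and $\E'$ have the same jumping lines. If anything you are more careful than the paper's own proof on two minor points: you rule out the crossed match $-k-1=k'+j$ (the paper only remarks that the lines "are different"), and you correctly flag that for $j=3$ (i.e.\ $m=1$) the uniqueness clause of Theorem \ref{B2:jump} is stated only for $m\ge 2$, so the corollary there needs exactly the direct inspection of \eqref{resolution-B2-sheaf} you propose --- and that inspection does work, since for $m=1$ a surjection onto $\cO_H(1-j)=\cO_H(-1-r)$ can only factor through the $\cL_0^{\oplus 2}$ summand and hence through the cokernel of the corresponding $2\times 2$ diagonal matrix with entries the equations of $H_0$ and $L_0$.
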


\begin{proof}
    It follows from Theorem \ref{resolution-B2} that the graded Betti numbers of $D(\A)$ and $D(\A')$ only differ by a degree shift and that such a shift is precisely $4(k-k')$. 
    So $D_0(\A)$ and $D_0(\A')[4(k'-k)]$ have the same graded Betti numbers.

    If these modules were isomorphic, then $D_0(\A)$ and $D_0(\A')[4(k'-k)]$ would give, after sheafification, two isomorphic vector bundles $\E$ and $\E'$ on $\P^2$. So the lines of any given jumping order of these bundles would coincide. However, this is a contradiction. Indeed, the $(j-1)$-jumping lines of  $\E$ are $L:\{y=(k+j)z\}$ and $H:\{y=(-k-1)z\}$, while those of $\E'$ are 
    $L':\{y=(k'+j)z\}$ and $H':\{y=(-k'-1)z\}$ and these are different lines if $k \ne k'$.
\end{proof}

It is not clear to us whether $\E$ and $g^*(\E')$ could be isomorphic, 
where $g \in \mathrm{PGL_3(}(\K)$ sends $L'$ to $L$ and $H'$ to $H$.

\bibliographystyle{amsalphav2}
\bibliography{arrangements.bib}

\bigskip
\noindent \textsc{Takuro Abe}. 
Rikkyo University, 3-34-1 Nishi Ikebukuro, 
Toshima-ku, 1718501 Tokyo, Japan.
Email \texttt{abetaku@rikkyo.ac.jp}

\bigskip
\noindent \textsc{Daniele Faenzi}. 
Université Bourgogne Europe, CNRS, IMB UMR 5584, F-21000 Dijon, France. Email \texttt{daniele.faenzi@ube.fr}.

\end{document}